\newcommand{\cD}{\mathcal{D}}
\newcommand{\bC}{\mathbb{C}}
\newcommand{\cM}{\mathcal{M}}
\newcommand{\cH}{\mathcal{H}}
\newcommand{\rA}{\mathscr{A}}
\newcommand{\rB}{\mathscr{B}}
\newcommand{\rT}{\mathscr{T}}
\newcommand{\Rn}{\widetilde{R}(t_n)}
\newcommand{\Rnn}{\widetilde{R}(t_{n+1})}
\newcommand{\Dt}{\tau}
\newcommand{\tomega}{\tilde{\omega}}
\newcommand{\sech}{{\rm sech}}
\newcommand{\asj}{\alpha_{s,j}}
\newcommand{\bsj}{\beta_{s,j}}
\newcommand{\Li}{L^\infty}
\definecolor{mygray}{gray}{0.95}
\newtheorem{remark}{Remark}
\newcolumntype{C}[1]{>{\vspace{0.5em}\begin{minipage}{#1}\centering\let\newline\\
\arraybackslash\hspace{0pt}}m{#1}<{\end{minipage}\vspace{0.5em}}}
\begin{document}
\graphicspath{{graphics/}}	
\title{Mass- and energy-conserved numerical schemes for nonlinear Schr\"odinger equations} 

\author{
Xiaobing Feng\thanks{Department of Mathematics, The University of
Tennessee, Knoxville, TN 37996, U.S.A.  ({\tt xfeng@math.utk.edu})}
% and 
%Department of Applied Mathematics, Northwestern Polytechnical University, Xian, Shaanxi, 710065, P. R. China. } 
%The work of this author was partially supported by the NSF grant DMS-1620168.}
\and
Hailiang Liu\thanks{Department of Mathematics, Iowa State University, Ames, IA 50011, U.S.A.
({\tt hliu@iastate.edu}).}
\and
Shu Ma\thanks{Department of Applied Mathematics, Northwestern Polytechnical University, 
Xian, Shaanxi, 710065, P. R. China.
({\tt mashu@mail.nwpu.edu.cn}). }
}

\maketitle
	
\begin{abstract}
In this paper, we propose a family of time-stepping schemes for approximating 
general nonlinear Schr\"odinger equations. 
The proposed schemes all satisfy both mass and energy conservation (in a modified form for the latter).
Truncation and dispersion error analyses are provided for {four proposed schemes}. 
Efficient fixed-point iterative solvers are also constructed to solve 
the resulting nonlinear discrete problems. As a byproduct, an efficient one-step implementation 
of the BDF schemes is obtained as well. 
Extensive numerical experiments are presented to demonstrate 
the convergence and the capability of capturing the blow-up time of the proposed schemes.
\end{abstract}
	
\begin{keywords}
Nonlinear Schr\"odinger equations, mass conservation and energy conservation,
BDF schemes, finite element methods, finite time blow-ups.
\end{keywords}
	
\begin{AMS}
65M06, 65M12
\end{AMS}

\pagestyle{myheadings}
\thispagestyle{plain}
\markboth{XIAOBING FENG, HAILIANG LIU and SHU MA}
{ENERGY-CONSERVED NUMERICAL METHODS FOR NLS EQUATIONS} 

%%%%%%%%%%%%%%%%
\section{Introduction.}\label{sec-1}
In this paper we consider the following nonlinear 
Schr\"odinger (NLS) equation:   
\begin{alignat}{2}
\label{nls_eqn1}
i u_t &= -\Delta u +\lambda f(|u|^2) u &&\qquad \mbox{in } {\cD _T}: = \cD  \times (0,T),\\
%\label{nls_eqn2}
%u  &= 0  &&\qquad \mbox{on } \partial \cD  \times (0,T), \\
\label{nls_eqn3}
u (0) &= u _0 &&\qquad\mbox{in }\cD, 
\end{alignat}
where $\cD\subset \mathbf{R}^d\, (d=1,2,3)$ is a bounded domain, $\lambda=\pm1$, $T>0$ and 
$i=\sqrt{-1}$ stands for the imaginary unit.  
$u=u(x,t): \cD_T \to \bC$ is  a complex-valued function. 
$f: \mathbf{R}_+\to \mathbf{R}_+$ is a given real-valued function,
which could be different in different applications, e.g., see
\cite{Pelinovsky1996Nonlinear, Sch1996Traveling} and the references therein. 
The best known $f$ is $f(s)=s$, which leads to the well-known nonlinear 
Schr\"odinger equation with cubic nonlinearity. To close the system, we 
also need to specify a boundary condition for $u$. In this paper we consider both  
homogeneous Dirichlet and periodic boundary condition (see Sections \ref{sec-7} and \ref{sec-8}), 
although most of our derivations and proofs are independent of the boundary condition. 
 
The above Schr\"odinger equation  
describes many physical phenomena in optics, mechanics, and plasma physics.
Mathematically, the NLS equation is a prototypical dispersive wave equation, 
its solutions exhibit some intriguing properties such as energy conservation, 
soliton wave, and possible blow-ups \cite{Bourgain1999Global, Tao2006Nonlinear}. 
In particular, the equation preserves both the mass and the Hamiltonian energy, 
that is, the following quantities are constants in time:
\begin{align}\label{energy1}
\cM(u)(t) &:= \|u(t)\|_{L^2}^2=\int_{\cD} |u(t)|^2\, dx, \\
\label{energy2}
\cH(u)(t) &:=\int_{\cD} \Bigl(|\nabla u(t)|^2+\lambda F(|u(t)|^2)\Bigr)\, dx,
\quad F(s)=\int_0^s f(\mu)\, d\mu. 
\end{align}
Here the dependence of $u$ on $x$ variable is suppressed for notational brevity. 
The case with positive $\lambda$ is called defocusing and with negative $\lambda$ is called focusing 
which allows for bright soliton solutions as well as breather solutions. 

Dispersion and nonlinearity can interact to produce permanent and localized wave forms 
in nonlinear dispersive wave equations such as the Korteweg-de Vries (KdV) equation
\cite{Lax2010Integrals, Miura1968Korteweg, Zabusky1965Interaction} 
and the cubic Schr\"odinger equation \cite{Calogero1988Spectral, Zwillinger1989Handbook}. 
A distinct feature of these equations is the infinite many conservation laws 
(conserved integrals as invariants), allowing for soliton solutions which emerge from 
collision unchanged over time. The quality of the numerical approximation hence hinges 
on how well the conserved integrals can be preserved at the discrete level. 
Numerical methods without this property may result in substantial phase and shape 
errors after long time integration.
Indeed for some wave equations the invariant preserving high order numerical methods 
have been shown more accurate than non-conservative methods after long-time numerical 
integration (see  \cite{Bona2013Conservative, Liu2016A}). 

For the nonlinear Schr\"odinger equation considered in this paper,  
a natural question is whether it is possible to design numerical schemes which conserve the mass  and 
energy  simultaneously.  A lot of effort has been made to preserve the mass by high order 
spatial discretization such as spectral methods \cite{Antoine2013Computational, bao2013numerical,bao2013Mathematical},   
and discontinuous Galerkin methods \cite{Lu2015Mass}.  
A modified numerical energy may be preserved approximately by the corresponding spatial discretization 
(see \cite{Liu2018On}). However, since those methods are based on a time-splitting technique \cite{bao2013Mathematical}, 
{except the Crank-Nickson scheme, they are only mass-conserved.} The objective of this work is to provide an attempt to address the above  question. Specifically, in this paper we develop and analyze a family of  
mass- and energy-conserved time-stepping schemes for approximating the cubic and general 
nonlinear Schr\"odinger equations. It should be noted that the energy conservation is also achieved for a 
modified energy, instead of the original energy. 

The rest of this paper is organized as follows. In Section \ref{sec-2} we present a general framework 
involving two sequences of time-stepping schemes, which is shown to preserve both mass and 
energy for arbitrary time-step sizes, for the cubic nonlinear Schr\"odinger equation. 
In Section \ref{sec-3} we present many specific examples of mass- and (modified) energy-conserved time 
stepping schemes which fit into the general framework, and derive the truncation errors for four of these schemes.
In Section \ref{sec-4} we present an efficient iterative algorithm to solve the resulting 
nonlinear equations. 
In Section \ref{sec-5} we extend the framework and examples to the general Schr\"odinger equations 
with arbitrary nonlinearity. 
In Section \ref{sec-6} we present a dispersion error analysis and derive the convergence rates 
for the dispersion errors.
In Section \ref{sec-7} we present numerical experiments to validate the theoretical results and 
to gauge the performance of the proposed schemes, especially the sharpness of the convergence rates. 
In Section 8 we present additional numerical experiments to demonstrate the capability of the proposed 
numerical schemes for resolving the blow-up
phenomenon. The paper is completed with some concluding remarks and comments 
given in Section \ref{sec-9}.
 
%%%%%%%%
\section{Semi-discretization in time: a general framework.}\label{sec-2}
In this section we propose a family of energy-conserved time-stepping schemes 
for approximating the cubic nonlinear Schr\"odinger equation.

Let $\tau>0$ and $t=t_n=n\tau$ for $n=0,1,2,\cdots, N$ be a uniform mesh for $[0,T]$. 
Let $k$ be a positive integer. We propose the following general $k$-step time-stepping scheme 
for problem \eqref{nls_eqn1}--\eqref{nls_eqn3}: 
Seeking $\{R^n,u^n\}$ for $n=k,k+1,\cdots, N$ such that 
\begin{align}\label{scheme_1}
id_t R^{n+1} =&-\Delta R^{n+1/2} + \frac{\lambda}{2}\bigl(|R^n|^2+|R^{n+1}|^2\bigr)R^{n+1/2},\\
\label{scheme_2}
u^{n+1}=&
\begin{cases}
\beta_0^{-1} (R^{n+1}-\sum\limits_{j=1}^{k-1} \beta_{j} u^{n+1-j}) &\qquad k>1,  \\
\beta_0^{-1} R^{n+1} &\qquad k=1,
\end{cases}
\end{align}
where we use notation 
\begin{align}\label{dt}
d_t R^{n+1}=\frac{R^{n+1}-R^n}{\tau}, \qquad R^{n+1/2}=\frac{R^{n+1}+R^n}2.
\end{align}

Note that from \eqref{scheme_2} we see that 
$R^n$ is a linear combination of $u^n,u^{n-1},\cdots,u^{n-k+1}$ given as follows:
\begin{align}\label{rn}
R^n =\sum\limits_{j= 0}^{ k-1}  \beta_{j} u^{n-j}.
\end{align}

As expected, choosing the parameters $\{\beta_j\}$ is a delicate issue. The guideline we use (see the details in
the next section) is to choose them such that $d_t R^{n+1}$ has a specific order of accuracy for approximating  $u_t(t_{n+1})$.  In Table~\ref{table_beta} we list several sets of parameters $\{\beta_j\}$ to be used in  
scheme \eqref{scheme_1}--\eqref{scheme_2}, which results in various specific schemes for the nonlinear 
Schr\"odinger equation (again, see the details in the subsequent section). 

\begin{table}[ht]\small\centering
\caption{Parameters $\beta_j$ for some proposed schemes.}
\renewcommand\arraystretch{1.35}
\begin{tabular*}{\hsize}{@{}@{\extracolsep{\fill}}rlcccccc@{}}
\toprule
Schemes&$\beta_0$&$\beta_1$&$\beta_2$&$\beta_3$&$\beta_4$&$\beta_5$\\
\midrule
Crank-Nicolson&1&&&\\
Leapfrog&$\frac12$&$\frac12$&&&\\
M-BDF2&$\frac{3}{2}$&$-\frac{1}{2}$&                &                 &               &              \\
M-BDF3&$\frac{11}{6}$&$-\frac{7}{6}$&$\frac{1}{3}$&                 &               &              \\
M-BDF4&$\frac{25}{12}$& $-\frac{23}{12}$& $\frac{13}{12}$&   $-\frac{1}{4}$&               &              \\
M-BDF5&$\frac{137}{60}$&$-\frac{163}{60}$&$\frac{137}{60}$& $-\frac{21}{20}$&$\frac{1}{5}$  &              \\
M-BDF6&$\frac{147}{60}$&$-\frac{213}{60}$&$\frac{237}{60}$&$-\frac{163}{60}$&$\frac{31}{30}$&$-\frac{1}{6}$\\
4-Step Symmetric&$-\frac{1}{12}$&$\frac{7}{12}$&$\frac{7}{12}$&$-\frac{1}{12}$\\
\bottomrule
\end{tabular*}
\label{table_beta}
\end{table}

We also remark that scheme \eqref{scheme_1}--\eqref{scheme_2} produces two sequences, 
namely $\{R^n\}$ and $\{u^n\}$.
The first sequence can be regarded as the auxiliary quantities 
which are generated by solving nonlinear equation \eqref{scheme_1}, 
while the second sequence are obtained %easily 
as linear combinations of the first one. 

To prove a key mass- and energy-conservation property of scheme \eqref{scheme_1}--\eqref{scheme_2},
we define the following discrete mass and energy
\begin{align}\label{dis_mh}
\cM^n:=\|R^n\|_{L^2}^2,\qquad 
\cH^n:= \frac 12 \|\nabla R^n\|_{L^2}^2 + \frac{\lambda}{4} \|R^n\|_{L^4}^4.
\end{align}

We start with establishing the following conservation results for the solution of scheme~\eqref{scheme_1}--\eqref{scheme_2}.

\begin{theorem} \label{Co:conservation} 
The solution to scheme \eqref{scheme_1} and \eqref{scheme_2} satisfies
$\cM^n = \cM^0$ and $\cH^n = \cH^0$ for all $n \ge 1$.
\end{theorem}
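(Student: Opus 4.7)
The plan is to establish both conservation identities by testing the scheme \eqref{scheme_1} against cleverly chosen quantities and exploiting the symmetric Crank--Nicolson-type structure in $R$. Note that the theorem statement only concerns the auxiliary sequence $\{R^n\}$, so the update rule \eqref{scheme_2} is irrelevant and only \eqref{scheme_1} enters the argument. The key algebraic identities I will use repeatedly are
\[
\operatorname{Re}(R^{n+1}-R^n,R^{n+1}+R^n)=\|R^{n+1}\|_{L^2}^2-\|R^n\|_{L^2}^2
\]
and the analogous one for gradients, which follow from writing $(z-w,z+w) = \|z\|^2-\|w\|^2 + 2i\operatorname{Im}(w,z)$.

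\textbf{Step 1 (mass).} I will take the $L^2$ inner product of \eqref{scheme_1} with $R^{n+1/2}$. Integration by parts (using either homogeneous Dirichlet or periodic boundary conditions) shows that $-(\Delta R^{n+1/2},R^{n+1/2})=\|\nabla R^{n+1/2}\|_{L^2}^2\in\mathbb{R}$, and the nonlinear term $\tfrac{\lambda}{2}\int(|R^n|^2+|R^{n+1}|^2)|R^{n+1/2}|^2$ is manifestly real. Hence the right-hand side is real, so the left-hand side $i(d_t R^{n+1},R^{n+1/2})$ must be real, i.e.\ $\operatorname{Re}(d_t R^{n+1},R^{n+1/2})=0$. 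Using the identity above, this reads $\|R^{n+1}\|_{L^2}^2=\|R^n\|_{L^2}^2$, which gives $\mathcal{M}^{n+1}=\mathcal{M}^n$ and iterating yields $\mathcal{M}^n=\mathcal{M}^0$.

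\textbf{Step 2 (energy).} Now I will take the $L^2$ inner product of \eqref{scheme_1} with $d_t R^{n+1}$ and take real parts. The left-hand side is $i\|d_t R^{n+1}\|_{L^2}^2$, which is purely imaginary and so contributes nothing. For the Laplacian term, integration by parts gives $-(\Delta R^{n+1/2},d_t R^{n+1}) = \tfrac{1}{\tau}(\nabla R^{n+1/2},\nabla(R^{n+1}-R^n))$, whose real part is $\tfrac{1}{2\tau}(\|\nabla R^{n+1}\|_{L^2}^2-\|\nabla R^n\|_{L^2}^2)$ by the same parallelogram-type identity. The crucial calculation is the nonlinear term: one expands
\[
(R^{n+1}+R^n)\,\overline{(R^{n+1}-R^n)}=|R^{n+1}|^2-|R^n|^2+2i\operatorname{Im}(R^n\overline{R^{n+1}}),
\]
so the real part of $\tfrac{\lambda}{4\tau}\int(|R^n|^2+|R^{n+1}|^2)(R^{n+1}+R^n)\overline{(R^{n+1}-R^n)}$ telescopes via $(a+b)(b-a)=b^2-a^2$ (with $a=|R^n|^2$, $b=|R^{n+1}|^2$) into $\tfrac{\lambda}{4\tau}(\|R^{n+1}\|_{L^4}^4-\|R^n\|_{L^4}^4)$. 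Combining, the real part of the tested equation reads
\[
0=\frac{1}{2\tau}\bigl(\|\nabla R^{n+1}\|_{L^2}^2-\|\nabla R^n\|_{L^2}^2\bigr)+\frac{\lambda}{4\tau}\bigl(\|R^{n+1}\|_{L^4}^4-\|R^n\|_{L^4}^4\bigr),
\]
which is precisely $\mathcal{H}^{n+1}=\mathcal{H}^n$.

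The only subtle point is the energy identity: it is essential that the discrete nonlinearity appears as $\tfrac{\lambda}{2}(|R^n|^2+|R^{n+1}|^2)R^{n+1/2}$ rather than, say, $\lambda|R^{n+1/2}|^2 R^{n+1/2}$, because this averaged form is exactly what makes the factor multiplying $(R^{n+1}+R^n)\overline{(R^{n+1}-R^n)}$ combine with it into a perfect telescoping difference of $|R|^4$. The rest of the argument is algebraic manipulation together with the standard trick of separating real and imaginary parts of an $L^2$ pairing. No smallness of $\tau$ is needed, confirming that the conservation holds for arbitrary step sizes.
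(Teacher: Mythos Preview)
Your proof is correct and follows essentially the same approach as the paper: test \eqref{scheme_1} against $\bar{R}^{n+1/2}$ and extract the imaginary part for mass conservation, then test against $d_t\bar{R}^{n+1}$ and extract the real part for energy conservation, using the same telescoping identities. The only cosmetic difference is that the paper phrases the first step as ``take the imaginary part of both sides'' whereas you argue ``the right-hand side is real, hence the real part of $(d_tR^{n+1},R^{n+1/2})$ vanishes,'' which is logically equivalent.
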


\begin{proof}
We multiply equation \eqref{scheme_1} by $\bar{R}^{n+1/2}$,
integrate it over $\cD$ and use the integration by parts to get, 
\begin{align}
\label{M_conv_1}
&\frac i{2\tau}\int_{\cD} \bigl(R^{n+1}-R^n\bigr)
\bigl(\bar{R}^{n+1}+\bar{R}^n\bigr)\, dx \\
&\qquad =\|\nabla R^{n+1/2}\|_{L^2}^2 
+ \frac \lambda 2 \int_{\cD}\bigl(|R^n|^2+|R^{n+1}|^2 \bigr)\bigl|R^{n+1/2}\bigr|^2 \, dx.
\end{align}
Taking the imaginary part of the resulting equation to get
\begin{align}\label{M_conv_2}
\frac{1}{2\tau}\Re \Bigl[\int_{\cD} \bigl(R^{n+1}-R^n\bigr)
\bigl(\bar{R}^{n+1}+\bar{R}^n\bigr)\, dx\Bigr]=0.
\end{align}
It follows from the identity $\Re \bigl[(a-b)(\bar{a}+\bar{b})\bigr] =|a|^2-|b|^2$ 
for $a,b \in \bC$ that
\begin{align}\label{M_conv_3}
\cM^{n+1} = \cM^n, \quad \forall\, n\ge 0.
\end{align}

To show the second conservation property, we multiply \eqref{scheme_1} by $d_t \bar{R}^{n+1}$,
integrate the equation over $\cD$ and use the integration by parts to get
\begin{align}\label{H_conv_1}
i\|d_tR^{n+1}\|_{L^2}^2
=&\frac 1{2\tau}\int_{\cD}\bigl(\nabla R^{n+1}-\nabla R^n\bigr)
\bigl(\nabla\bar{R}^{n+1}+\nabla \bar{R}^n\bigr)\, dx\\
&+ \frac \lambda{4\tau}\int_{\cD}\bigl(|R^n|^2+|R^{n+1}|^2\bigr)\bigl(R^{n+1}-R^n\bigr)
\bigl(\bar{R}^{n+1}+\bar{R}^n\bigr)\, dx. \notag
\end{align}
Taking the real part on the equation and applying identity 
$\Re\bigl[(|a|^2+|b|^2)(a-b)(\bar{a}+\bar{b})\bigr] =|a|^4-|b|^4$ for $a,b \in \bC$ yield 
\begin{align}\label{H_conv_2}
\cH^{n+1} = \cH^n, \quad \forall\, n\ge 0.
\end{align} 
The proof is completed.
\end{proof}

%%%%%%%%
\section{Specific schemes and their truncation error analysis.}\label{sec-3}
In this section we propose a number of specific schemes by defining $R^n$ in terms of 
$u^n, u^{n-1},\cdots, u^{n-k+1}$. In other words, we shall specify the choice of 
parameters $\{\beta_j\}_{j=0}^{k-1}$ for each scheme. 

\subsection{\bf A modified Crank-Nicolson scheme ($k=1$).} 
By setting $R^n=u^n$, the modified Crank-Nicolson scheme is defined as
\begin{align}\label{C-N}
\frac i \tau (u^{n+1}-u^n) = -\frac 12 \Delta \bigl(u^{n+1}+u^n\bigr) 
+\frac \lambda 4 \Bigl(\bigl|u^n\bigr|^2 + \bigl|u^{n+1}\bigr|^2\Bigr)
\bigl(u^{n+1} + u^n\bigr).
\end{align}

The local truncation error (LTE) of the modified  Crank-Nicolson scheme 
\eqref{C-N} is defined by
\begin{align}\label{C-N:1}
TE^n = &i\frac 1\tau \bigl(u(t_{n+1})-u(t_{n})\bigr)
+ \frac 12\Delta\Bigl(u(t_{n+1}) + u(t_n)\Bigr)\\
&-\frac{\lambda}{4} \Bigl(\bigl|u(t_n)\bigr|^2 + \bigl|u(t_{n+1})\bigr|^2\Bigr)
\bigl(u(t_{n+1}) + u(t_n)\bigr)
\notag\\
=&:\rA_1+\rA_2+\rA_3,
\notag
\end{align}
where $u(t)$ is the true solution of the nonlinear Schr\"odinger equation as follows:
\begin{align}\label{pde}
i u_t(t) +\Delta u(t) -\lambda |u(t)|^2 u(t) = 0.
\end{align}
Here and below the dependence of $u$ on $x$ variable is suppressed for notational brevity.

The first lemma establishes the local truncation error for the modified Crank-Nicolson scheme.

\begin{lemma}
The local truncation error of the modified Crank-Nicolson scheme \eqref{C-N} ($k=1$) is $O(\tau^2)$.
\end{lemma}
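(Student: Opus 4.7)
The plan is to Taylor-expand each of the three constituent pieces $\rA_1,\rA_2,\rA_3$ about the midpoint $t_{n+1/2}$ and then use the fact that the PDE \eqref{pde} evaluated at $t_{n+1/2}$ gives an exact cancellation of the leading-order contributions. The assumption throughout is that $u$ is smooth enough in time (at least $C^3$ in $t$ with bounded derivatives on the interval in question), which is standard when stating a local truncation error result.

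For $\rA_1$, the centered finite difference identity $\frac{1}{\tau}\bigl(u(t_{n+1})-u(t_n)\bigr) = u_t(t_{n+1/2}) + O(\tau^2)$ follows at once from Taylor expanding $u(t_{n+1})$ and $u(t_n)$ about $t_{n+1/2}$ (the odd-order $\tau$ terms cancel, leaving a $\tau^2 u_{ttt}(t_{n+1/2})/24$ error). Similarly for $\rA_2$, the centered average gives $\tfrac{1}{2}\bigl(u(t_{n+1})+u(t_n)\bigr) = u(t_{n+1/2}) + O(\tau^2)$, and applying $\Delta$ (which commutes with Taylor expansion in $t$) yields $\rA_2 = \Delta u(t_{n+1/2}) + O(\tau^2)$.

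The nonlinear piece $\rA_3$ is the only step requiring a bit of care, so it is the main (modest) obstacle. I would write it as a product of two centered averages,
\begin{equation*}
\rA_3 = -\lambda \cdot \tfrac{1}{2}\bigl(|u(t_n)|^2+|u(t_{n+1})|^2\bigr) \cdot \tfrac{1}{2}\bigl(u(t_{n+1})+u(t_n)\bigr),
\end{equation*}
apply the centered-average identity to each factor (with $g(t)=|u(t)|^2$ in the first factor, which is smooth since $u$ is), and obtain $\tfrac{1}{2}\bigl(|u(t_n)|^2+|u(t_{n+1})|^2\bigr) = |u(t_{n+1/2})|^2 + O(\tau^2)$ and $\tfrac{1}{2}\bigl(u(t_{n+1})+u(t_n)\bigr) = u(t_{n+1/2}) + O(\tau^2)$. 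Multiplying and using boundedness of $u$ and $|u|^2$ (on the time interval where the local expansion is done) gives $\rA_3 = -\lambda |u(t_{n+1/2})|^2 u(t_{n+1/2}) + O(\tau^2)$.

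Finally I would add the three expansions:
\begin{equation*}
TE^n = i u_t(t_{n+1/2}) + \Delta u(t_{n+1/2}) - \lambda |u(t_{n+1/2})|^2 u(t_{n+1/2}) + O(\tau^2),
\end{equation*}
and invoke \eqref{pde} at $t=t_{n+1/2}$ to conclude that the bracketed expression vanishes, leaving $TE^n = O(\tau^2)$. The only subtle point worth flagging explicitly in the write-up is that in the product expansion of $\rA_3$ the $O(\tau^2)$ remainder absorbs cross terms of the form $|u(t_{n+1/2})|^2 \cdot O(\tau^2)$ and $O(\tau^2) \cdot u(t_{n+1/2})$, which is legitimate because $u$ is assumed bounded uniformly in $t$; everything else is a routine Taylor computation.
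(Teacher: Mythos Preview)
Your proposal is correct and follows essentially the same approach as the paper: Taylor-expand each of $\rA_1,\rA_2,\rA_3$ about $t_{n+1/2}$ and then invoke the PDE \eqref{pde} at $t_{n+1/2}$ to cancel the leading terms. The only cosmetic difference is that the paper expands $|u(t_n)|^2$ and $|u(t_{n+1})|^2$ explicitly to exhibit the cancellation of the $O(\tau)$ cross terms, whereas you more efficiently apply the centered-average identity directly to $g(t)=|u(t)|^2$; both yield the same $O(\tau^2)$ conclusion.
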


\begin{proof}
By using a Taylor series expansion of $u(t_{n+1})$ and $u(t_n)$ about $u(t_{n+1/2})$,
the accurate order of derivative term $\rA_1$ and Laplace term $ \rA_2 $ are $O(\tau^2)$.
\begin{align}\label{C-N:2}
\rA_1
=&\frac{i}{\tau}u(t_{n+1/2}) + \frac{i}{2}u_t(t_{n+1/2})
+\frac{i\tau}{8}u_{tt}(t_{n+1/2}) + \frac{i\tau^2}{2^33!}u^{(3)}(t_{n+1/2})+ O(\tau^3)
\\
&-\frac{i}{\tau}u(t_{n+1/2}) + \frac{i}{2}u_t(t_{n+1/2})
-\frac{i\tau}{8}u_{tt}(t_{n+1/2}) + \frac{i\tau^2}{2^33!}u^{(3)}(t_{n+1/2})+ O(\tau^3)
\notag\\
= &\,iu_t(t_{n+1/2})+O(\tau^2).\notag\\
\label{C-N:3}
\rA_2
=&\frac 12 \Delta u(t_{n+1/2}) + \frac{\tau}{4}(\Delta u)_t (t_{n+1/2})
+\frac{\tau^2}{16}(\Delta u)_{tt} (t_{n+1/2}) + O(\tau^3)
\\
&+\frac 12 \Delta u(t_{n+1/2}) - \frac{\tau}{4}(\Delta u)_t (t_{n+1/2})
+\frac{\tau^2}{16}(\Delta u)_{tt}(t_{n+1/2})+ O(\tau^3) \notag\\
=&\Delta u(t_{n+1/2})+O(\tau^2).\notag
\end{align}
Note that
\begin{align*}
\bigl|u(t_n)\bigr|^2
=&\Bigl(u(t_{n+1/2})-\frac{\tau}{2}u_t(t_{n+1/2})
+ O(\tau^2)\Bigr)
\Bigl(\bar{u}(t_{n+1/2})-\frac{\tau}{2}\bar{u}_t(t_{n+1/2})
+ O(\tau^2)\Bigr)
\notag\\
=&\bigl|u(t_{n+1/2})\bigr|^2
-\frac{\tau}{2}\bar{u}(t_{n+1/2})u_t(t_{n+1/2})
-\frac{\tau}{2}\bar{u}_t(t_{n+1/2})u(t_{n+1/2})+O(\tau^2),
\notag\\
\bigl|u(t_{n+1})\bigr|^2
=&\Bigl(u(t_{n+1/2})+\frac{\tau}{2}u_t(t_{n+1/2})
+ O(\tau^2)\Bigr)
\Bigl(\bar{u}(t_{n+1/2})+\frac{\tau}{2}\bar{u}_t(t_{n+1/2})
+ O(\tau^2)\Bigr), 
\notag\\
=&\bigl|u(t_{n+1/2})\bigr|^2
+\frac{\tau}{2}\bar{u}(t_{n+1/2})u_t(t_{n+1/2})
+\frac{\tau}{2}\bar{u}_t(t_{n+1/2})u(t_{n+1/2})+O(\tau^2).
\notag
\end{align*}
which leads to 
\begin{align}\label{C-N:4}
\rA_3=&-\frac{\lambda}{4}\Bigl(2\bigl|u(t_{n+1/2})\bigr|^2
+O(\tau^2)\Bigr)
\bigl(2u(t_{n+1/2})+O(\tau^2)\bigr)\\
=&-\lambda |u(t_{n+1/2})|^2 u(t_{n+1/2}) +O(\tau^2).
\notag
\end{align}

Recall the definition of the true solution $u(t)$ 
and using \eqref{C-N:2}-\eqref{C-N:4} in \eqref{C-N:1} gives
\begin{align}\label{C-N:5}
TE^n=&\Bigl[iu_t(t_{n+1/2})
+\Delta u(t_{n+1/2})
-\lambda |u(t_{n+1/2})|^2 u(t_{n+1/2}) \Bigr]
+O(\tau^2)
=O(\tau^2).
\end{align}
Thus the local truncation error of above Crank-Nicolson scheme is $O(\tau^2)$. The proof is completed. 
\end{proof}

\subsection{\bf A Leapfrog scheme ($k=2$).} 
For $k=2$ we set $\beta_0=\beta_1=\frac 12 $ in (\ref{scheme_2}) so that 
$R^n=\frac12(u^n+u^{n-1})$,
which inserted into \eqref{scheme_1}) leads to the following  Leapfrog scheme:
\begin{align}\label{L-F}
&\,\frac{i}{2\tau}(u^{n+1}-u^{n-1})
=-\frac{1}{4} \bigl(\Delta u^{n+1}
+2\Delta u^n+\Delta u^{n-1} \bigr) \\
&\qquad+ \frac{\lambda}{8} \Bigl(\Bigl|\frac{u^n+u^{n-1}}{2}\Bigr|^2
+\Bigl|\frac{u^{n+1}+u^n}{2}\Bigr|^2\Bigr)
\bigl(u^{n+1} + 2u^n+ u^{n-1} \bigr).
\notag
\end{align}

The local truncation error of the leapfrog scheme  \eqref{L-F} is defined by 
\begin{align}\label{L-F:1}
TE^n = &\frac{i}{2\tau}\bigl(u(t_{n+1}) - u(t_{n-1})\bigr)
+ \frac{1}{4}\Delta \bigl( u(t_{n+1})
+ 2 u(t_n) + u(t_{n-1})\bigr)\notag \\
&-\frac{\lambda}{8} \Bigl[\Bigl|\frac{u(t_n) + u(t_{n-1})}{2}\Bigr|^2
+\Bigl|\frac{u(t_{n+1}) + u(t_n)}{2}\Bigr|^2\Bigr]
\bigl(u(t_{n+1}) + 2u(t_n)+u(t_{n-1})\bigr)\\
=:&\rB_1+\rB_2+\rB_3.
\notag
\end{align}

The next lemma establishes the local truncation error for the above Leapfrog scheme.
\begin{lemma}
The local truncation error of the Leapfrog scheme ($k=2$) \eqref{L-F} is $O(\tau^2)$.
\end{lemma}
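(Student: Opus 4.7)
The strategy mirrors the proof given for the modified Crank-Nicolson scheme, with the key difference that all Taylor expansions will be centered at $t_n$ (the natural symmetric point for the Leapfrog stencil) rather than at $t_{n+1/2}$. The plan is to bound each of $\rB_1$, $\rB_2$, $\rB_3$ separately, recover the three terms of the continuous PDE \eqref{pde} evaluated at $t_n$, and conclude by observing that the PDE residual is zero.

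First, I would expand $u(t_{n\pm 1})$ about $t_n$ as $u(t_{n\pm 1}) = u(t_n) \pm \tau u_t(t_n) + \tfrac{\tau^2}{2}u_{tt}(t_n) \pm \tfrac{\tau^3}{6}u_{ttt}(t_n) + O(\tau^4)$. Subtracting gives $\rB_1 = i u_t(t_n) + O(\tau^2)$ because the even-order terms cancel in the antisymmetric difference $u(t_{n+1}) - u(t_{n-1})$. Next, applying the same expansion to $\Delta u$ and summing the symmetric combination $\Delta u(t_{n+1}) + 2\Delta u(t_n) + \Delta u(t_{n-1})$ makes the odd-order derivative terms cancel, leaving $4\Delta u(t_n) + O(\tau^2)$, so $\rB_2 = \Delta u(t_n) + O(\tau^2)$.

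For $\rB_3$, I would compute the two magnitude averages and the cubic factor separately. Each average $\tfrac12(u(t_n) + u(t_{n\pm 1}))$ equals $u(t_n) + O(\tau)$, and squaring its modulus gives $|u(t_n)|^2 + O(\tau)$; the symmetric combination of the two squared moduli cancels the $O(\tau)$ contributions (since the first-order corrections from the $n-1$ and $n+1$ sides are opposite in sign), yielding $\tfrac14|u(t_n)+u(t_{n-1})|^2 + \tfrac14|u(t_{n+1})+u(t_n)|^2 = 2|u(t_n)|^2 + O(\tau^2)$. Similarly, $u(t_{n+1}) + 2u(t_n) + u(t_{n-1}) = 4 u(t_n) + O(\tau^2)$. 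Multiplying these two asymptotic expressions yields $\rB_3 = -\lambda |u(t_n)|^2 u(t_n) + O(\tau^2)$.

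Finally, summing the three contributions gives
\begin{align*}
TE^n = \bigl[i u_t(t_n) + \Delta u(t_n) - \lambda |u(t_n)|^2 u(t_n)\bigr] + O(\tau^2),
\end{align*}
and the bracketed expression vanishes identically by the NLS equation \eqref{pde} evaluated at $t_n$, so $TE^n = O(\tau^2)$. The main technical care needed—and the only step that is not fully routine—is the cancellation bookkeeping in $\rB_3$: one must verify that the $O(\tau)$ contributions from the mean-modulus-squared factor and from the cubic factor interact correctly so that no $O(\tau)$ residual survives in the product. This is the natural analogue of the manipulation \eqref{C-N:4} in the Crank-Nicolson case, but one must track both the $t_{n-1}$ and $t_{n+1}$ sides to see the cancellation.
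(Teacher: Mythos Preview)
Your proposal is correct and follows essentially the same approach as the paper: both center all Taylor expansions at $t_n$, treat $\rB_1,\rB_2,\rB_3$ separately, exploit the symmetric/antisymmetric structure of the stencil to cancel odd-order terms, and then invoke \eqref{pde} at $t_n$. Your closing caveat about $O(\tau)$ cross-terms in the product for $\rB_3$ is actually unnecessary---since both the modulus-squared sum and the linear sum are each already accurate to $O(\tau^2)$, their product is automatically $O(\tau^2)$-accurate with no further bookkeeping needed.
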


\begin{proof}
We formally apply the Taylor series expansions of $u(t_{n+1})$ and $u(t_{n-1})$
about $u(t_n)$ for $\rB_1$ to get
\begin{align}\label{L-F:2}
\rB_1
=&\frac{i}{2\tau}u(t_n) + \frac{i}{2}u_t(t_n)
+\frac{i\tau}{4}u_{tt}(t_n) + \frac{i\tau^2}{12}u^{(3)}(t_n) + O(\tau^3)
\\
&-\frac{i}{2\tau}u(t_n) + \frac{i}{2}u_t(t_n)
-\frac{i\tau}{4}u_{tt}(t_n) + \frac{i\tau^2}{12}u^{(3)}(t_n)+ O(\tau^3) \notag\\
= &iu_t(t_n) + O(\tau^2).
\notag
\end{align}

Applying the Taylor series expansions of $u(t_{n+1})$ and $u(t_{n-1})$
about $u(t_n)$ again for $\rB_2$, we have
\begin{align}\label{L-F:3}
\rB_2
=&\frac{1}{4}\Delta u(t_n) + \frac{\tau}{4}(\Delta u)^{'}(t_n)
+\frac{\tau^2}{8}(\Delta u)^{''}(t_n)
+\frac{\tau^3}{24}(\Delta u)^{(3)}(t_n) + O(\tau^4)\\
&+ \frac 12 \Delta u(t_n)
+\frac{1}{4}\Delta u(t_n)
-\frac{\tau}{4}(\Delta u)_t(t_n)
+\frac{\tau^2}{8}(\Delta u)_{tt}(t_n)\notag\\
&-\frac{\tau^3}{24}(\Delta u)^{(3)}(t_n) + O(\tau^4)
\notag\\
=&\Delta u(t_n) + O(\tau^2).\notag
\end{align}

By using the following facts for $\rB_3$
\begin{align}\label{L-F:5}
\Bigl|\frac{u(t_n)+u(t_{n-1})}{2}\Bigr|^2
=&\frac{1}{4}\Bigl(u(t_n)+u(t_{n-1})\Bigr)\Bigl(\bar{u}(t_n)+\bar{u}(t_{n-1})\Bigr) \notag\\
=&\frac{1}{4}\Bigl(2u(t_n)-u_t(t_n)\tau
+ O(\tau^2)\Bigr)
\Bigl(2\bar{u}(t_n)-\bar{u}_t(t_n)\tau
+ O(\tau^2)\Bigr) 
\\
=&\bigl|u(t_n)\bigr|^2
-\frac 12 \tau\bar{u}(t_n)u_t(t_n)
-\frac 12 \tau\bar{u}_t(t_n)u(t_n) + O(\tau^2),
\notag\\
\label{L-F:6}
\Bigl|\frac{u(t_n)+u(t_{n+1})}{2}\Bigr|^2
=&\frac{1}{4}\Bigl(u(t_n)+u(t_{n+1})\Bigr)\Bigl(\bar{u}(t_n)+\bar{u}(t_{n+1})\Bigr) \notag\\
=&\bigl|u(t_n)\bigr|^2
+\frac 12 \tau\bar{u}(t_n)u_t(t_n)
+\frac 12 \tau\bar{u}_t(t_n)u(t_n) + O(\tau^2), 
\end{align}
the order of nonlinear term can be estimated as
\begin{align}\label{L-F:7}
\rB_3
=&-\frac{\lambda}{8}\Bigl(2\bigl|u(t_n)\bigr|^2
+O(\tau^2)\Bigr)
\bigl(4u(t_n) + O(\tau^2)\bigr)\\
\approx& -\lambda |u(t_n)|^2u(t_n) + O(\tau^2).
\notag
\end{align}

Combing \eqref{L-F:2}-\eqref{L-F:7} together in \eqref{L-F:1} leads to
\begin{align}\label{L-F:8}
TE^n = \Bigl[iu_t(t_n) +\Delta u(t_n) -\lambda |u(t_n)|^2 u(t_n)\Bigr] + O(\tau^2)
= O(\tau^2).
\end{align}
Thus the local truncation error of above Leapfrog scheme is $O(\tau^2)$. The proof is completed.
\end{proof}

\subsection{\bf Modified BDF schemes ($k=s$).} 
Let $s>1$. We recall that the s-step BDF scheme approximates the time derivative 
$u_t (t_{n+1})$ as follows: 
\begin{align}\label{BDF:1}
u_t(t_{n+1}) \approx& \frac{1}{\tau} \sum\limits_{j = 0}^s {\asj u^{n+1-j}}, 
\end{align}
where $\asj$ are given in Table~\ref{table_alpha_beta}.   
Our idea is to rewrite the above BDF expression as a first order backward difference, that is,
\begin{align}\label{BDF:2}
\frac{1}{\tau}\sum\limits_{j = 0}^s {\asj u^{n+1-j}} = \frac{1}{\tau}(R^{n+1}-R^{n}) 
\qquad \mbox{with} \quad
R^n=\sum\limits_{j = 0}^{s-1} \bsj u^{n-j},
\end{align}
which turns out is possible. Somehow this simple reformulation has not been seen 
in the literature before. 
  
In order to determine the coefficients $\bsj$, we solve them using the following identity:
\begin{align}\label{BDF:3}
\beta_{s,0}u^{n+1} + \sum\limits_{j = 1}^{s-1} (\bsj-\beta_{s,j-1})u^{n+1-j}
-\beta_{s,s-1}u^{n+1-s}
= \sum\limits_{j = 0}^s \asj u^{n+1-j}.
\end{align}
This holds true for all $u^j$ as long as the following matrix equation is satisfied:
\begin{align}\label{BDF:4}
\left[ 
\begin{array}{*{20}{c}}
1&\\
{-1}&1&\\
&\ddots&\ddots\\
&&{-1}&1&\\
&&&\ddots&\ddots&\\
&&&&{-1}&1\\
&&&&&{-1}
\end{array} 
\right]_{(s+1)\times s}
\left[ \begin{array}{*{20}{c}}
{\beta_{s,0}}\\
{\beta_{s,1}}\\
\vdots \\
{\bsj}\\
\vdots \\
{\beta_{s,s-1}}
\end{array} 
\right] 
= \left[ \begin{array}{*{20}{c}}
{\alpha_{s,0}}\\
{\alpha_{s,1}}\\
\vdots \\
{\asj}\\
\vdots \\
{\alpha_{s,s-1}}\\
{\alpha_{s,s}}
\end{array}\right].
\end{align}

It is easy to check that 
\begin{align}\label{BDF:5}
\bsj= \sum\limits_{\ell = 0}^j \alpha_{s,\ell}, \qquad j=0,1,2,\cdots, s-1.
\end{align}
Thus our modified BDF schemes are defined as
\begin{align}\label{BDF}
\frac{i}{\tau}\Bigl(\sum\limits_{j = 0}^{s-1} &\bsj u^{n+1-j}
-\sum\limits_{j = 0}^{s-1} \bsj u^{n-j}\Bigr)\\
=&-\Delta R^{n+1/2} 
+ \frac{\lambda}{2} \Bigl(\Bigl|\sum\limits_{j = 0}^{s-1} \bsj u^{n+1-j}\Bigr|^2
+\Bigl|\sum\limits_{j = 0}^{s-1} \bsj u^{n-j}\Bigr|^2 \Bigr)R^{n+1/2},
\notag
\end{align}
where 
\begin{align*}
R^{n+1/2}=
\frac 12 \beta_{s,0}  u^{n+1}
+\frac 12 \Bigl(\sum\limits_{j = 1}^{s-1}\bsj
+\sum\limits_{j = 1}^{s}\beta_{s,j-1} \Bigr) u^{n+1-j}, 
\end{align*}
and $\bsj$ are given in the following table (note that since BDF methods with $s>6$ 
are not zero-stable, so we only present s-step BDF with $s \le 6$ here).  
\begin{table}[htb]\small\centering\footnotesize
\caption{Parameters $\asj$ and $\bsj$.}
\renewcommand\arraystretch{2}
\resizebox{\textwidth}{!}{
\begin{tabular}{cccccccccccccc}\toprule
s& $\alpha_{s,0}$& $\alpha_{s,1}$& $\alpha_{s,2}$& $\alpha_{s,3}$& $\alpha_{s,4}$& $\alpha_{s,5}$& $\alpha_{s,6}$&
$\beta_{s,0}$& $\beta_{s,1}$& $\beta_{s,2}$& $\beta_{s,3}$& $\beta_{s,4}$& $\beta_{s,5}$ \\ 
\midrule
2&$\frac32$&$-2$& $\frac12$& & & & &  $\frac32$ &  $-\frac12$ & & & &   \\
3&$\frac{11}{6}$&$-3$& $\frac32$&$-\frac13$& & & &$\frac{11}6$&$-\frac76$&$\frac13$& & & \\
4&$\frac{25}{12}$&$-4$&3&$-\frac43$& $\frac14$& & &$\frac{25}{12}$& $-\frac{23}{12}$&$\frac{13}{12}$&$-\frac14$& &\\
5&$\frac{137}{60}$&$-5$&5&$-\frac{10}3$& $\frac54$&$-\frac15$&            
&$\frac{137}{60}$&$-\frac{163}{60}$&$\frac{137}{60}$& $-\frac{21}{20}$&$\frac15$ &\\
6&$\frac{147}{60}$&$-6$&$\frac{15}2$&$-\frac{20}{3}$&$\frac{15}{4}$&$-\frac65$&$\frac16$&
$\frac{147}{60}$&$-\frac{213}{60}$&$\frac{237}{60}$&$-\frac{163}{60}$&$\frac{31}{30}$&$-\frac16$\\
\bottomrule
\end{tabular}}
\label{table_alpha_beta}
\end{table}

Set $\Rn:=\sum\limits_{j = 0}^{s-1} \bsj u(t_{n-j})$ as a linear combination of exact solution values, 
{then the local truncation error of the modified BDF schemes \eqref{BDF} is defined by}
\begin{align}\label{BDFs:1}
TE^n=&\frac{i}{\tau}\bigl(\Rnn-\Rn)\bigr)
+\frac 12  \bigl(\Delta \Rnn+ \Delta \Rn\bigr)\\
&-\frac{\lambda}{4} \Bigl(\Bigl|\Rnn\Bigr|^2
+\Bigl|\Rn\Bigr|^2\Bigr)
\bigl(\Rnn+ \Rn\bigr)
\notag\\
=& \rT_1+\rT_2+\rT_3.
\notag
\end{align}

\begin{lemma}
The local truncation error of the modified BDF schemes \eqref{BDF} ($k=s$) is $O(\tau^2)$.
\end{lemma}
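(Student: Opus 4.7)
The plan is to Taylor-expand each of $\rT_1$, $\rT_2$, $\rT_3$ about the time point $t_{n+1}$ and then invoke the PDE \eqref{pde} at $t_{n+1}$ to cancel the three leading $O(1)$ contributions. The easy piece is $\rT_1$: by the defining relation \eqref{BDF:2}, the quantity $\tau^{-1}(\Rnn-\Rn)$ is \emph{exactly} the standard $s$-step BDF quotient $\tau^{-1}\sum_{j=0}^{s}\asj u(t_{n+1-j})$ applied to the true solution $u$. Since BDF$_s$ is of classical order $s\ge 2$ for every $2\le s\le 6$, this immediately gives
\begin{align*}
\rT_1 = i u_t(t_{n+1}) + O(\tau^s) = i u_t(t_{n+1}) + O(\tau^2).
\end{align*}

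The heart of the argument is to establish the half-step consistency
\begin{align*}
\tfrac12\bigl(\Rnn+\Rn\bigr) = u(t_{n+1}) + O(\tau^2),
\end{align*}
which I would reduce to the two moment identities $\sum_{j=0}^{s-1}\bsj = 1$ and $\sum_{j=0}^{s-1} j\,\bsj = -\tfrac12$. Both of these follow from the closed form \eqref{BDF:5} by a single double-sum swap, combined with the standard second-order BDF consistency on the $(s+1)$-tuple $\{\asj\}$, namely $\sum_{j=0}^{s}\asj=0$, $\sum_{j=0}^{s} j\asj=-1$, and $\sum_{j=0}^{s} j^2\asj=0$ (all of which hold since BDF$_s$ has order $\ge 2$ for $s\ge 2$). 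These moment identities say that $\Rnn$ is a second-order forward half-step extrapolation, i.e.\ $\Rnn = u(t_{n+1}) + \tfrac{\tau}{2}u_t(t_{n+1}) + O(\tau^2)$; combined with $\Rn = \Rnn - \tau u_t(t_{n+1}) + O(\tau^s)$ inherited from the analysis of $\rT_1$, the $O(\tau)$ drifts cancel in the average and produce the claimed second-order approximation.

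The remaining pieces are then handled routinely. Applying $\Delta$ gives $\rT_2 = \Delta u(t_{n+1}) + O(\tau^2)$. For $\rT_3$ I would expand $|\Rnn|^2$ and $|\Rn|^2$ about $|u(t_{n+1})|^2$; the $O(\tau)$ real parts occur with opposite signs by the same cancellation, so $\tfrac12(|\Rnn|^2+|\Rn|^2) = |u(t_{n+1})|^2 + O(\tau^2)$, whence $\rT_3 = -\lambda|u(t_{n+1})|^2 u(t_{n+1}) + O(\tau^2)$. Summing $\rT_1+\rT_2+\rT_3$ and invoking \eqref{pde} at $t_{n+1}$ cancels the three leading contributions and leaves $TE^n = O(\tau^2)$.

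The main obstacle is the algebraic verification of the two $\bsj$-moment identities: while elementary, this step is the only place where the precise structure of the modified BDF coefficients enters, and it is what ties the uniform $O(\tau^2)$ truncation to every $2\le s\le 6$ despite the varying classical order of the underlying BDF formula.
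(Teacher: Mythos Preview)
Your proposal is correct and follows essentially the same approach as the paper: both expand about $t_{n+1}$, invoke the order-$s$ accuracy of BDF for $\rT_1$, and rely on the same moment identities $\sum_{j}\bsj=1$ and $\sum_{j}(2j+1)\bsj=0$ (equivalently your $\sum_j j\bsj=-\tfrac12$) to kill the $O(\tau)$ drift in the averages entering $\rT_2$ and $\rT_3$. Your packaging is slightly cleaner---you first isolate the individual expansions $\Rnn=u(t_{n+1})+\tfrac{\tau}{2}u_t+O(\tau^2)$ and $\Rn=u(t_{n+1})-\tfrac{\tau}{2}u_t+O(\tau^2)$ and then average, and you explicitly derive the $\bsj$-moments from the BDF consistency relations $\sum_j\asj=0$, $\sum_j j\asj=-1$, $\sum_j j^2\asj=0$, whereas the paper simply asserts the needed identities---but the substance is identical.
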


\begin{proof}
Recall the approximation of the derivative yields $\rT_1=iu_t(t_{n+1})+O(\tau^s)$.
Using the Taylor series expansion of $u(t_{n+1-j})$ about $u(t_n)$ we have  
\begin{align}\label{BDFs:2}
u(t_{n+1-j})
=&u(t_{n+1}) -j\tau u_t(t_{n+1}) + \frac{(j\tau)^2}{2} u_{tt}(t_{n+1}) + O(\tau^3),
\end{align}
which implies 
\begin{align}\label{BDFs:3}
\rT_2
=&\frac 12 \beta_{s,0} \Delta u(t_{n+1})
+\frac 12 \sum\limits_{j = 1}^{s-1} (\bsj+\beta_{s,j-1})
\Delta u(t_{n+1-j})
+\frac 12 \beta_{s,s-1} \Delta u(t_{n+1-s})
\\
=&\Bigl(\sum\limits_{j = 0}^{s-1}\bsj \Bigr)
\Delta u(t_{n+1})
-
\frac{\tau}{2}\Bigl(\sum\limits_{j = 1}^{s-1}j\bsj +\sum\limits_{j = 1}^{s} j\beta_{s,j-1} \Bigr)
(\Delta u)_{t}(t_{n+1})
\notag\\
&+\frac{\tau^2}{4}\Bigl(\sum\limits_{j = 1}^{s-1}j^2\bsj 
+ \sum\limits_{j = 1}^{s} j^2\beta_{s,j-1} \Bigr)
(\Delta u)_{tt}(t_{n+1})
+ O(\tau^3)
\notag \\
=&\Delta u(t_{n+1}) + O(\tau^2), \notag
\end{align}
where we have used the facts that
\begin{align*}
\sum\limits_{j = 0}^{s-1}\bsj = 1
\quad\mbox{and}\quad
\sum\limits_{j = 1}^{s-1}j\bsj+\sum\limits_{j = 1}^{s}j\beta_{s,j-1}
=\sum\limits_{j = 0}^{s-1}(2j+1)\bsj = 0.
\end{align*}

For the nonlinear term, we have the following estimates:
\begin{align}\label{BDFs:4}
\bigl|\Rn\bigr|^2
=&\Bigl| \sum\limits_{j = 1}^s \beta_{s,{j-1}} u(t_{n+1-j})\Bigr|^2\\
=&\Bigl|\sum\limits_{j = 1}^s \beta_{s,{j-1}} 
u(t_{n+1})
-\tau \sum\limits_{j = 1}^s j\beta_{s,{j-1}} 
u_t(t_{n+1}) 
+O(\tau^2)\Bigr|^2
\notag\\
=&\bigl|u(t_{n+1})\bigr|^2
-\tau \sum\limits_{j = 1}^s j\beta_{s,{j-1}} 
u_t(t_{n+1}) \bar{u}(t_{n+1}) \notag\\
&-\tau\sum\limits_{j = 1}^s j\beta_{s,{j-1}} 
u(t_{n+1}) \bar{u}_t(t_{n+1})
+O(\tau^2),
\notag
\end{align}
\begin{align}\label{BDFs:5}
\bigl|\Rnn\bigr|^2
=&\Bigl|\sum\limits_{j = 0}^{s-1} \bsj 
u(t_{n+1})
-\tau \sum\limits_{j = 1}^{s-1} j\bsj 
u_t(t_{n+1}) 
+O(\tau^2)\Bigr|^2
\\
=&\bigl|u(t_{n+1})\bigr|^2
- \tau \sum\limits_{j = 1}^{s-1} j\bsj 
u_t(t_{n+1}) \bar{u}(t_{n+1})\notag\\
&-\tau \sum\limits_{j = 1}^{s-1} j\bsj 
u(t_{n+1}) \bar{u}_t(t_{n+1})
+O(\tau^2),\notag
\end{align}
where we use $\sum\limits_{j = 0}^{s-1}\bsj=\sum\limits_{j = 1}^s\beta_{s,{j-1}}=1$, 
$\sum\limits_{j = 1}^{s-1} j\bsj \neq 0$  
and $\sum\limits_{j = 1}^s j\beta_{s,{j-1}} \neq 0$.
Since $\sum\limits_{j = 1}^{s-1} j\bsj + \sum\limits_{j = 1}^s j\beta_{s,{j-1}} = 0$, hence,
\begin{align}\label{BDFs:6}
\Bigl|\Rnn\Bigr|^2
+\Bigl|\Rn\Bigr|^2
=2\bigl|u(t_{n+1})\bigr|^2+O(\tau^2).
\end{align}

Similar to the estimates in \eqref{BDFs:3}, we have
$\Rnn+ \Rn = 2u(t_{n+1}) + O(\tau^2)$ and
\begin{align}\label{BDFs:7}
\rT_3
=&-\frac{\lambda}{4} \Bigl(\Bigl|\Rnn\Bigr|^2
+\Bigl|\Rn\Bigr|^2\Bigr)
\bigl(\Rnn+ \Rn\bigr)\\
=&-\frac{\lambda}{4}\Bigl(2\bigl|u(t_{n+1})\bigr|^2
+O(\tau^2) \Bigr) \bigl(2u(t_{n+1}) +O(\tau^2)\bigr) \notag\\
=&-\lambda |u(t_{n+1})|^2 u(t_{n+1}) +O(\tau^2). \notag
\end{align}

Combining \eqref{BDFs:2} and \eqref{BDFs:7} in \eqref{BDFs:1}, we obtain
\begin{align}\label{BDF:7}
TE^n=&\Bigl[iu_t(t_{n+1}) + \Delta u(t_{n+1}) - \lambda |u(t_{n+1})|^2 u(t_{n+1}) \Bigr] 
+O(\tau^2)
=O(\tau^2).
\end{align}
Thus the local truncation error is $O(\tau^2)$.
\end{proof}

\begin{remark}
As a by-product, the above construction also gives a (one-step) backward Euler reformulation 
for BDF schemes. Recall that the $s$-stage BDF scheme for  $u'(t)=f(t, u(t)) $ is defined as
\[
\sum_{j=0}^s \alpha_{s,j} u^{n+1-j} = f\bigl( t_{n+1}, u^{n+1} \bigr).
\]
Since
\[ 
\sum_{j=0}^s \alpha_{s,j} u^{n+1-j} = \frac{1}{\tau} \bigl( R^{n+1}- R^n \bigr)  \quad
\mbox{and} \quad 
R^{n+1} = \sum_{j=0}^{s-1}  \beta_j u^{n+1-j},
\] 
then we can rewrite the BDF scheme as
\begin{align}\label{euler_ref}
&\frac{1}{\tau} \bigl( R^{n+1}- R^n \bigr) = \widehat{f}\bigl( t_{n+1}, R^{n+1}\bigr),  
\end{align}
where
\begin{align*} 
\widehat{f}\bigl( t_{n+1}, R^{n+1}\bigr)
&= f\Bigl( t_{n+1},   \beta_0^{-1}  \bigl( R^{n+1}- \sum_{j=1}^{s-1} \beta_{s,j} u^{n+1-j}  \bigr)  \Bigr), \\
u^{n+1} &= \beta_0^{-1}  \bigl( R^{n+1}- \sum_{j=1}^{s-1} \beta_{s,j} u^{n+1-j} \bigr). 
\end{align*} 
Hence, each BDF scheme can be implemented as a one-step backward Euler scheme as \eqref{euler_ref} shows. 
\end{remark}

%%%%%
\subsection{\bf A four-step symmetric scheme ($k=4$).} 
To define this scheme, we set 
\begin{align}
R^{n+1}=\frac{1}{12}\bigl(-u^{n+1}+7u^n+7u^{n-1}-u^{n-2}\bigr), 
\end{align}
which fits (\ref{rn}) with $\beta=\frac{1}{12}(-1, 7, 7, -1)^\top$.   
%By using the Taylor expansion, one can verify the following 
%approximation orders of accuracy. 

\begin{lemma}
The local truncation error of the four-step symmetric scheme ($k=4$) is $O(\tau^2)$.
\end{lemma}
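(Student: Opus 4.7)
The plan is to mirror the three preceding lemmas. First, introduce the exact-solution surrogates
$\widetilde{R}(t_{n+1}) = \tfrac{1}{12}\bigl(-u(t_{n+1})+7u(t_n)+7u(t_{n-1})-u(t_{n-2})\bigr)$
and $\widetilde{R}(t_n)$ (obtained by shifting all indices down by one), and define the truncation error by substituting these surrogates for $R^{n+1}, R^n$ in the scheme \eqref{scheme_1}. Splitting $TE^n = \rT_1+\rT_2+\rT_3$ exactly as in \eqref{BDFs:1} into the time-difference part, the averaged Laplacian, and the averaged cubic term, the task reduces to showing each $\rT_j$ reproduces the corresponding term of the PDE at a common reference time with an $O(\tau^2)$ remainder, so that the identity $iu_t+\Delta u-\lambda|u|^2u=0$ kills the leading contribution.

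The crucial structural observation is that the coefficient vector $(-1,7,7,-1)/12$ is palindromic: $\widetilde{R}(t_{n+1})$ samples $u$ at four nodes symmetric about $t_{n-1/2}$, while $\widetilde{R}(t_n)$ samples $u$ at four nodes symmetric about $t_{n-3/2}$. Both centers are equidistant from $t_{n-1}$, so I would Taylor-expand each of $u(t_{n+1}), u(t_n), u(t_{n-1}), u(t_{n-2}), u(t_{n-3})$ about $t_{n-1}$ and collect terms. A short check of the moment sums $\sum_j \beta_j = 1$ and of $\sum_j(2-j)^k\beta_j$ and $\sum_j(1-j)^k\beta_j$ for $k=1,2,3$ produces
\begin{align*}
\widetilde{R}(t_{n+1}) &= u(t_{n-1})+\tfrac{\tau}{2}u_t(t_{n-1})+\tfrac{\tau^2}{12}u_{tt}(t_{n-1})+O(\tau^3), \\
\widetilde{R}(t_n) &= u(t_{n-1})-\tfrac{\tau}{2}u_t(t_{n-1})+\tfrac{\tau^2}{12}u_{tt}(t_{n-1})+O(\tau^3).
\end{align*}
Subtracting and dividing by $\tau$ yields $\rT_1 = iu_t(t_{n-1})+O(\tau^2)$; averaging the two surrogates and applying $\Delta$ gives $\rT_2 = \Delta u(t_{n-1})+O(\tau^2)$.

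For the nonlinear term $\rT_3$ I would insert the same expansions into $|\widetilde{R}(t_{n+1})|^2+|\widetilde{R}(t_n)|^2$ and $\widetilde{R}(t_{n+1})+\widetilde{R}(t_n)$. Exactly as in \eqref{BDFs:6}--\eqref{BDFs:7}, the leading $O(\tau)$ cross-terms in the two squared moduli cancel because the first-order corrections in $\widetilde{R}(t_{n+1})$ and $\widetilde{R}(t_n)$ differ only in sign; hence $|\widetilde{R}(t_{n+1})|^2+|\widetilde{R}(t_n)|^2 = 2|u(t_{n-1})|^2+O(\tau^2)$ and $\widetilde{R}(t_{n+1})+\widetilde{R}(t_n) = 2u(t_{n-1})+O(\tau^2)$, from which $\rT_3 = -\lambda|u(t_{n-1})|^2 u(t_{n-1})+O(\tau^2)$. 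Summing $\rT_1+\rT_2+\rT_3$ and invoking the PDE at $t=t_{n-1}$ gives $TE^n = O(\tau^2)$.

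The argument is essentially bookkeeping, with no new ideas beyond those already used for the modified BDF schemes; the principal obstacle—though a minor one—is verifying the moment identities that make the palindromic coefficients $(-1,7,7,-1)/12$ produce the stated second-order cancellations simultaneously for $\widetilde{R}(t_{n+1})$ (centered at $t_{n-1/2}$) and $\widetilde{R}(t_n)$ (centered at $t_{n-3/2}$) after re-expanding both about the common base point $t_{n-1}$. Once those identities are in hand, the rest of the calculation is a verbatim adaptation of the modified BDF proof.
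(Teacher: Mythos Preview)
Your proposal is correct and follows essentially the same route as the paper: the same three-term split $\rT_1+\rT_2+\rT_3$, the same base point $t_{n-1}$, and the same cancellation mechanism for the nonlinear term. The only cosmetic difference is that the paper first collapses $\widetilde{R}(t_{n+1})-\widetilde{R}(t_n)$ into the classical five-point central difference $\tfrac{1}{12\tau}(-u^{n+1}+8u^n-8u^{n-2}+u^{n-3})$ and thereby records the sharper bound $\rT_1=iu_t(t_{n-1})+O(\tau^4)$, whereas your separate expansions of $\widetilde{R}(t_{n+1})$ and $\widetilde{R}(t_n)$ to $O(\tau^3)$ yield only $\rT_1=iu_t(t_{n-1})+O(\tau^2)$; this weaker estimate is still sufficient for the lemma.
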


\begin{proof}
The derivative term is $O(\tau^4)$ as follows
\begin{align}\label{4SS:1}
\frac{1}{\tau}\bigl(R^{n+1}-R^n\bigr)
=&\frac{1}{12\tau}(-u^{n+1}+8u^n-8u^{n-2}+u^{n-3})\\
\approx & \,u_t(t_{n-1})+O(\tau^4). \notag
\end{align}

For the Laplace term, we obtain
\begin{align}\label{4SS:2}
\frac 12 \Delta \bigl(R^{n+1}+R^n\bigr)
=&\frac{1}{24}\Delta\bigl(- u^{n+1}+6u^n+14u^{n-1}+6u^{n-2}-u^{n-3}\bigr)\\
\approx & \Delta u(t_{n-1}) + O(\tau^2). \notag
\end{align}

For the nonlinear term, we have the following estimates:
\begin{align}\label{4SS:3}
-\frac{\lambda}{2} \bigl(|R^n|^2&+|R^{n+1}|^2 \bigr)R^{n+1/2}\\
=&-\frac{\lambda}{4}\Bigl(2\bigl|u(t_{n-1})\bigr|^2
+O(\tau^2)\Bigr)
\Bigl(2u(t_{n-1})+O(\tau^2)\Bigr)\notag\\
\approx&-\lambda |u(t_{n-1})|^2 u(t_{n-1}) + O(\tau^2).\notag
\end{align}
The proof is complete.
\end{proof}

\section{An efficient fixed-point nonlinear solver.} \label{sec-4}
To solve the nonlinear equation \eqref{scheme_1}, we adapt the fixed-point iterative algorithm of
\cite{Lu2015Mass} to the job. The proposed algorithm is defined below.

\medskip
\noindent
{\bf Algorithm 1}
\medskip

{\em Step 1}: Given $u^l$ for $l=0,1,...,n-k+1$, set
\begin{align}
R^n=\sum\limits_{j= 0}^{k-1} \beta_{j} u^{n-j}.
\end{align}

{\em Step 2}:
Update $R^{n+1}$ as follows:  define $\{ w^l\}_{l=0}^L$ iteratively by solving 
\begin{align}
\Bigl(iI+ \frac{\tau}2 \Delta \Bigr) w^{l+1} 
-\frac{\lambda\tau}{4} \bigl(|R^n|^2+|2w^l-R^n|^2 \bigr)w^{l+1}
=i R^n, \quad l=0, 1, \cdots, L, 
\end{align}
such that $\|w^L-w^{L-1}\|\leq \delta$ for some prescribed tolerance level $\delta$,  then set
\begin{align*}
R^{n+1}=2w-R^n.
\end{align*}

{\em Step 3}:
Update $u^{n+1}$ from $R^{n+1}$ by \eqref{scheme_2}, that is, 
\begin{align}
u^{n+1} = \begin{cases}
\beta_0^{-1} \Bigl(R^{n+1}-\sum\limits_{j=1}^{k-1} \beta_{j} u^{n+1-j} \Bigr) &\qquad k>1,  \\
\beta_0^{-1} R^{n+1} &\qquad k=1.
\end{cases}
\end{align}

We note that any spatial discretization method, such as finite element, spectral and discontinuous Galerkin methods,
can be employed in combination with the above algorithm to solve the nonlinear Schr\"odinegr equation (cf.  \cite{Lu2015Mass}).

\section{Extensions to Schr\"odinger equations with arbitrary nonlinearity.} \label{sec-5}
We consider the following initial-boundary value problem for the general nonlinear 
Schr\"odinger equation :   
\begin{alignat}{2}
\label{nls_eqn1a}
i u_t &= -\Delta u +\lambda  f(|u|^2) u &&\qquad \mbox{in } {\cD _T}: = \cD  \times (0,T),\\
%\label{nls_eqn2a}
%u  &= 0  &&\qquad \mbox{on } \partial \cD  \times (0,T), \\
\label{nls_eqn3a}
u (0) &= u _0 &&\qquad\mbox{in }\cD, 
\end{alignat}

We extend scheme \eqref{scheme_1}--\eqref{scheme_2} as follows for problem 
\eqref{nls_eqn1a}--\eqref{nls_eqn3a}:
Seeking $\{R^n,u^n\}$ for $n=k,k+1,\cdots, N$ such that 
\begin{align}\label{scheme_1a}
id_t R^{n+1} =&-\Delta R^{n+1/2} 
+ {\lambda G\bigl(|R^{n+1}|^2, |R^n|^2 \bigr) R^{n+1/2}},\\
\label{scheme_2a}
u^{n+1}=&\begin{cases}
\beta_0^{-1} (R^{n+1}-\sum\limits_{j=1}^{k-1} \beta_j u^{n+1-j}) &\qquad k>1,  \\
\beta_0^{-1} R^{n+1} &\qquad k=1,
\end{cases}
\end{align}
where $G(a,b)$ is the following two variable function: 
\begin{align}\label{Fp}
G(a,b)=\frac{F(a)-F(b)}{a-b},\qquad F(s):=\int_0^s f(\nu)\, d\nu. 
\end{align}

For example, $G(a,b)= \sum_{j=0}^p a^{p-j} b^j$ if $f(s)=s^{p+1}$.  
Again, from \eqref{scheme_2a} we have  
\begin{align*}
R^n=\sum\limits_{j= 0}^{ k-1}  \beta_{j} u^{n-j}.
\end{align*}
Define 
\begin{align}
\cM^n_g:=\|R^n\|_{L^2}^2,\qquad 
\cH^n_g:= \|\nabla R^n\|_{L^2}^2 + \lambda \| F(|R^n|^2)\|_{L^1}.
\end{align}
we have the following mass- and energy-conservation property of 
scheme \eqref{scheme_1a}--\eqref{scheme_2a}.
\begin{lemma} \label{conservation_law}
The solution to scheme \eqref{scheme_1a} and \eqref{scheme_2a} satisfies
$\cM^n_g = \cM^0_g$ and $\cH^n_g = \cH^0_g$ for all $n \ge 1$.
\end{lemma}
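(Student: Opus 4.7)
The plan is to mirror the proof of Theorem~\ref{Co:conservation} line by line, replacing the cubic nonlinear term $\tfrac{\lambda}{2}(|R^n|^2+|R^{n+1}|^2)R^{n+1/2}$ by the general term $\lambda G(|R^{n+1}|^2,|R^n|^2) R^{n+1/2}$, and using the divided-difference structure \eqref{Fp} to produce the telescoping of $F$ that the generalized discrete energy requires.

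For the mass conservation $\cM^n_g=\cM^0_g$, I would multiply \eqref{scheme_1a} by $\bar{R}^{n+1/2}$, integrate over $\cD$, and integrate by parts. The Laplacian contribution yields $\|\nabla R^{n+1/2}\|_{L^2}^2$, which is real. The key point specific to the general case is that $G(|R^{n+1}|^2,|R^n|^2)$ is a real number (a divided difference of real values of a real-valued function $F$) and $|R^{n+1/2}|^2$ is obviously real, so the nonlinear contribution $\lambda\int_\cD G(|R^{n+1}|^2,|R^n|^2)|R^{n+1/2}|^2\,dx$ is also real. Taking the imaginary part of the equation then kills everything on the right-hand side, leaving exactly the identity \eqref{M_conv_2}, and the identity $\Re[(a-b)(\bar a+\bar b)]=|a|^2-|b|^2$ finishes the argument just as before.

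For the energy conservation $\cH^n_g=\cH^0_g$, I would multiply \eqref{scheme_1a} by $\overline{d_t R^{n+1}}$, integrate over $\cD$, and integrate by parts. The left-hand side is $i\|d_tR^{n+1}\|_{L^2}^2$, purely imaginary, so its real part vanishes. The Laplacian term reproduces the identity \eqref{H_conv_1}, whose real part after multiplication by $2$ gives $\tfrac1\tau(\|\nabla R^{n+1}\|_{L^2}^2-\|\nabla R^n\|_{L^2}^2)$. The crucial step is the nonlinear term: I need the pointwise identity
\begin{equation*}
\Re\bigl[G(|a|^2,|b|^2)\,(a+b)\overline{(a-b)}\bigr]
= G(|a|^2,|b|^2)\,(|a|^2-|b|^2)
= F(|a|^2)-F(|b|^2),\qquad a,b\in\bC,
\end{equation*}
where the first equality uses that $b\bar a-a\bar b$ is purely imaginary and $G$ is real, and the second is precisely the definition \eqref{Fp} of $G$ as a divided difference of $F$. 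Applying this pointwise with $a=R^{n+1}$, $b=R^n$, integrating over $\cD$, and combining with the Laplacian contribution yields
\begin{equation*}
0 \;=\; \tfrac{1}{\tau}\bigl(\|\nabla R^{n+1}\|_{L^2}^2-\|\nabla R^n\|_{L^2}^2\bigr)
+ \tfrac{\lambda}{\tau}\bigl(\|F(|R^{n+1}|^2)\|_{L^1}-\|F(|R^n|^2)\|_{L^1}\bigr),
\end{equation*}
which is exactly $\cH^{n+1}_g=\cH^n_g$ after multiplying through by $\tau$.

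The only nontrivial step, and the one that genuinely uses the modeling choice \eqref{Fp}, is the divided-difference identity above; everything else is bookkeeping identical to the cubic proof. I would therefore highlight this identity as a short standalone computation before repeating the mass and energy arguments in parallel with Theorem~\ref{Co:conservation}. No new difficulty arises from iteration in $n$: both conservation laws propagate step-by-step for all $n\ge 0$ by induction.
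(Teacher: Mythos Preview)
Your proposal is correct and follows exactly the approach the paper intends: the proof of Theorem~\ref{Co:conservation} carries over verbatim once the cubic identity $\Re[(|a|^2+|b|^2)(a-b)(\bar a+\bar b)]=|a|^4-|b|^4$ is replaced by the divided-difference identity $\Re[G(|a|^2,|b|^2)(a+b)\overline{(a-b)}]=F(|a|^2)-F(|b|^2)$, which is precisely the purpose of the definition~\eqref{Fp}. The paper itself omits the proof of Lemma~\ref{conservation_law} as a routine extension, and your write-up supplies exactly that extension with the key algebraic step clearly isolated.
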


The nonlinear solver, Algorithm 1, now is replaced by the following modified algorithm.

\medskip
\noindent
{\bf Algorithm 2}

{\em Step 1}: Given $u^l$ for $l=0,1,...,n-k+1$, set
\begin{align}
R^n=\sum\limits_{j= 0}^{k-1} \beta_j u^{n-j}.
\end{align}

{\em Step 2}:
Update $R^{n+1}$ as follows:  
define $\{ w^l\}_{l=0}^L$ iteratively by solving 
\begin{align}
\Bigl(i I+ \frac{\tau}2 \Delta \Bigr) w^{l+1} 
-\frac{\lambda\tau}{2} G\bigl(|2w^l-R^n|^2, |R^n|^2\bigr)w^{l+1}
=i R^n, \quad l=0, 1, \cdots, L, 
\end{align}
such that $\|w^L-w^{L-1}\|\leq \delta$ for some prescribed tolerance level $\delta$, then set
\begin{align*}
R^{n+1}=2w-R^n.
\end{align*}

{\em Step 3}:
Update $u^{n+1}$ from $R^{n+1}$ by \eqref{scheme_2a}, that is, 
\begin{align}
u^{n+1}
=\begin{cases}
\beta_0^{-1} \Bigl(R^{n+1}-\sum\limits_{j=1}^{k-1} \beta_j u^{n+1-j} \Bigr) &\qquad k>1,  \\
\beta_0^{-1} R^{n+1} &\qquad k=1.
\end{cases}
\end{align}

Again, we remark that any spatial discretization method, such as finite element, spectral and discontinuous Galerkin methods,
can be employed in combination with the above algorithm to solve the nonlinear Schr\"odinegr equation.

%%%%%%%
\section{\bf Dispersion error analysis.} \label{sec-6}
In this section, we analyze the difference between the exact and
numerical dispersion relations for the nonlinear Schr\"odinger equation 
and investigate ways to reduce the dispersive error generated by 
our mass- and energy-conserved time-stepping scheme \eqref{scheme_1}--\eqref{scheme_2}. 
To minimize the numerical phase error while accurately solving the Schr\"odinger equation, 
the idea of preserving dispersion relation equation (DRE), which was proposed earlier   
in the area of computational aeroacoustics by Tam and Webb \cite{Christopher1993Dispersion}, 
is adopted. We refer the reader to \cite{Mclachlan2003Featured} for a discussion of other 
structure-preserving algorithms for solving ordinary differential equations. 

Consider the cubic nonlinear Schr\"odinger equation,
\begin{align}\label{cnls}
i u_t +\Delta u = \lambda|u|^2 u.
\end{align} 
Substituting the plane wave solution $u = \exp(i(kx-\omega t)) $ into  
equation \eqref{cnls}, the relation between the angular frequency
$\omega$ and the wave number $k$ is given by \cite{Sheu2015Dispersion}
\begin{align}\label{omega_1}
\omega = k^2 + \lambda.
\end{align} 

To derive the numerical dispersion relation equation for scheme 
\eqref{scheme_1}--\eqref{scheme_2}, the discrete plane wave solution
%the nodal representations 
of the form $u^n = e^{i(kx-\tomega n \Dt)}$ is utilized,
where $\tomega$ is the numerical angular frequency.

\begin{lemma}
The numerical dispersion relation of the Crank-Nickson scheme 
for the cubic nonlinear Schr\"odinger equation \eqref{cnls} is given by 
\begin{align}\label{cn_0}
%\frac 2\Dt \sin(\tomega \Dt/2) = (k^2 + \lambda) \cos(\tomega \Dt/2),
\tomega = \frac 2\Dt \arctan \bigl((k^2 + \lambda)\Dt/2\bigr).
\end{align}
%and the accuracy of the dispersion error $\omega-\tomega$ is of order two.
\end{lemma}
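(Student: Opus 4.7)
The plan is to substitute the discrete plane wave ansatz $u^n = e^{i(kx - \tilde\omega n\tau)}$ directly into the modified Crank-Nicolson scheme \eqref{C-N} and reduce the resulting equation to a single scalar relation between $\tilde\omega$, $k$, $\tau$, and $\lambda$. The first observation that makes this tractable is that $|u^n|^2 = |u^{n+1}|^2 = 1$ identically for any plane wave, so the nonlinear factor $\tfrac{\lambda}{4}(|u^n|^2+|u^{n+1}|^2)$ collapses to a constant $\lambda/2$. Combined with $\Delta u^n = -k^2 u^n$ (taking, say, $d=1$; higher $d$ just replaces $k^2$ by $|k|^2$), the scheme \eqref{C-N} becomes a purely algebraic identity in the shift factor $z := e^{-i\tilde\omega\tau} = u^{n+1}/u^n$.

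Next I would divide \eqref{C-N} through by $u^n$ to obtain
\begin{equation*}
\frac{i}{\tau}(z-1) = \frac{k^2+\lambda}{2}(z+1),
\end{equation*}
and then isolate the Möbius-type quantity $(z-1)/(z+1)$:
\begin{equation*}
\frac{i(z-1)}{z+1} = \frac{(k^2+\lambda)\tau}{2}.
\end{equation*}
The key trigonometric identity to invoke is that for $z = e^{-i\tilde\omega\tau}$, multiplying numerator and denominator by $e^{i\tilde\omega\tau/2}$ gives
\begin{equation*}
\frac{z-1}{z+1} = \frac{e^{-i\tilde\omega\tau/2}-e^{i\tilde\omega\tau/2}}{e^{-i\tilde\omega\tau/2}+e^{i\tilde\omega\tau/2}} = -i\tan(\tilde\omega\tau/2),
\end{equation*}
so the left-hand side simplifies to $\tan(\tilde\omega\tau/2)$. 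This yields $\tan(\tilde\omega\tau/2) = (k^2+\lambda)\tau/2$, and solving for $\tilde\omega$ produces exactly \eqref{cn_0}.

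There is no real obstacle here beyond careful bookkeeping; the proof is essentially a three-line computation once one recognizes that plane waves trivialize the cubic nonlinearity. The only subtlety worth flagging is that the inverse tangent is multi-valued, so the formula should be understood modulo the appropriate branch—one selects the principal branch so that $\tilde\omega \to \omega = k^2+\lambda$ as $\tau \to 0^+$, which matches the exact dispersion relation \eqref{omega_1} and confirms second-order consistency of the phase error. This branch-selection remark could be worth stating explicitly, but it does not affect the displayed formula.
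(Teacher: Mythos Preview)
Your proof is correct and follows essentially the same route as the paper: substitute the plane-wave ansatz, use $|u^n|^2=1$ and $\Delta u^n=-k^2u^n$ to reduce \eqref{C-N} to the scalar equation $\tfrac{i}{\tau}(z-1)=\tfrac{k^2+\lambda}{2}(z+1)$, then multiply numerator and denominator by $e^{i\tilde\omega\tau/2}$ to extract $\tan(\tilde\omega\tau/2)$. Your branch-selection remark is a welcome addition not made explicit in the paper.
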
 
\begin{proof}
Substituting $u^n = e^{i(kx-\tomega n \Dt)}$ 
into the discrete Crank-Nicolson scheme \eqref{C-N} we get
\begin{align}\label{cn_1}
\frac i\Dt \bigl(e^{-i\tomega \Dt}- 1\bigr)
= \frac 12(k^2 + \lambda)\bigl(e^{-i\tomega \Dt}+ 1\bigr).
\end{align}

Multiplying the both sides of \eqref{cn_1} by $e^{i\tomega \Dt/2 }$ to obtain
\begin{align}\label{cn_2}
\frac i\Dt \bigl(e^{-i\tomega\Dt/2 }- e^{i\tomega \Dt/2 }\bigr)
= \frac 12 (k^2 + \lambda)\bigl(e^{-i\tomega \Dt/2}
+ e^{i\tomega \Dt/2 }\bigr).
\end{align}

By using the identities $e^{ix}-e^{-ix} = 2i\sin x$ and $e^{ix}+e^{-ix} = 2\cos x$,  
it follows from \eqref{cn_2} that 
\begin{align*} %\label{cn_3}
\frac 2\Dt \sin(\tomega \Dt/2) = (k^2 + \lambda)\cos(\tomega \Dt/2).
\end{align*}
Hence, \eqref{cn_0} holds. The proof is complete.
\end{proof}

To analyze the difference between the exact and
numerical dispersions for nonlinear Schr\"odinger equations,
we define the following dispersion error 
\begin{align}\label{disp_err}
\omega_{error} := \frac{|\omega-\tomega|}\omega.
\end{align}

Table~\ref{table_cn_disp_err} shows the computed dispersion errors and the convergence order 
for the modified Crank-Nicolson scheme. The numerical results indicate that 
this scheme has a second order dispersion error.

\begin{table}[htb]\small\centering\footnotesize
\caption{Dispersion error rates of Crank-Nickson scheme.}
\begin{tabular*}{\hsize}{@{}@{\extracolsep{\fill}}ccccc@{}}\toprule
$\lambda$         &               $k$& $ \Dt$& Dispersion errors& Error rates\\ 
\midrule
\multirow{4}{*}{2}&\multirow{4}{*}{1}&  1E-01&          0.003324& 	       --\\
				  &             	 &	1E-02& 		3.323244E-05&      1.9989\\
				  &     			 &	1E-03& 		3.333332E-07&      2.0000\\
				  &     			 &	1E-04& 		3.333333E-09&      2.0000\\
\bottomrule
\end{tabular*}
\label{table_cn_disp_err}
\end{table}

\begin{lemma}
The numerical dispersion relation of the Leapfrog scheme 
for the cubic nonlinear Schr\"odinger equation \eqref{cnls} is given by 
\begin{align}\label{lf_0}
\frac{2}{\Dt}\sin(\tomega \Dt/2)
= \bigl(k^2 + \lambda\cos^2(\tomega \Dt/2)\bigr)\cos(\tomega \Dt/2).
\end{align}
%and the dispersion error \eqref{lf_0} has a second order accuracy.
\end{lemma}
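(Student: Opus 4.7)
The plan is to mimic the Crank-Nicolson proof: substitute the discrete plane wave $u^n = e^{i(kx - \tomega n\Dt)}$ directly into the Leapfrog scheme \eqref{L-F} and simplify using standard trigonometric identities. The structural advantage here is that $|u^n| \equiv 1$, so the nonlinear coefficients collapse to a purely trigonometric expression in $\tomega\Dt/2$.

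First I would evaluate the three building blocks. For the time-difference, $u^{n+1}-u^{n-1} = u^n (e^{-i\tomega\Dt} - e^{i\tomega\Dt}) = -2i\sin(\tomega\Dt)\, u^n$, so $\frac{i}{2\Dt}(u^{n+1}-u^{n-1}) = \frac{1}{\Dt}\sin(\tomega\Dt)\, u^n = \frac{2}{\Dt}\sin(\tomega\Dt/2)\cos(\tomega\Dt/2)\, u^n$. For the spatial term, since $\Delta u^m = -k^2 u^m$, the symmetric average gives $-\tfrac14(\Delta u^{n+1}+2\Delta u^n+\Delta u^{n-1}) = \tfrac{k^2}{4}(u^{n+1}+2u^n+u^{n-1})$, and a short computation yields $u^{n+1}+2u^n+u^{n-1} = u^n(e^{-i\tomega\Dt}+2+e^{i\tomega\Dt}) = 4\cos^2(\tomega\Dt/2)\, u^n$. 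For the nonlinear factor, $u^n + u^{n-1} = u^n(1+e^{i\tomega\Dt})$ gives $\bigl|\tfrac{u^n+u^{n-1}}{2}\bigr|^2 = \cos^2(\tomega\Dt/2)$, and the same value is obtained for $\bigl|\tfrac{u^{n+1}+u^n}{2}\bigr|^2$. Hence the bracketed prefactor in the nonlinear term of \eqref{L-F} is $2\cos^2(\tomega\Dt/2)$.

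Assembling these pieces, the Leapfrog scheme reduces to the scalar identity
\begin{equation*}
\frac{2}{\Dt}\sin(\tomega\Dt/2)\cos(\tomega\Dt/2)
= k^2\cos^2(\tomega\Dt/2) + \lambda\cos^4(\tomega\Dt/2),
\end{equation*}
after cancelling the common factor $u^n$ on both sides. Dividing through by $\cos(\tomega\Dt/2)$ (which is nonzero away from the Nyquist frequency) immediately produces the desired relation \eqref{lf_0}.

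I do not foresee a genuine obstacle: the only subtlety is the bookkeeping in reducing the two nonlinear amplitudes $|{\cdot}|^2$ to $\cos^2(\tomega\Dt/2)$, since one must carefully factor out the carrier $u^n$ from the pairs $u^n+u^{n-1}$ and $u^n+u^{n+1}$ before taking the modulus. Once that is done, the rest is the same half-angle/double-angle manipulation used in the Crank-Nicolson proof.
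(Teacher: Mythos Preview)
Your proposal is correct and follows essentially the same route as the paper: substitute the discrete plane wave into \eqref{L-F}, reduce the two nonlinear moduli to $\cos^2(\tomega\Dt/2)$ via half-angle identities, and then simplify the remaining exponential sums with the standard $\sin$/$\cos$ identities. The only cosmetic difference is that you factor out $u^n$ from the outset, whereas the paper carries the exponential factors a step longer before collapsing them; the algebra and the final division by $\cos(\tomega\Dt/2)$ are identical.
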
 

\begin{proof}
Setting $u^n = e^{i(kx - \tomega n \Dt)}$ and using the identities $e^{ix}+e^{-ix} = 2\cos x$ and  
$\cos 2\theta = 2\cos^2\theta -1$ in $\bigl|\frac 12 \bigl(u^n+u^{n-1}\bigr)\bigr|^2$ 
and $\bigl|\frac 12 \bigl(u^n+u^{n+1}\bigr)\bigr|^2$ we get
\begin{align}\label{lf_1}
\Bigl|\frac 12 \bigl(u^n&+u^{n-1}\bigr)\Bigr|^2 + \Bigl|\frac 12 \bigl(u^n+u^{n+1}\bigr)\Bigr|^2\\
=&\Bigl|\frac 12 e^{ikx-i\tomega(n-\frac 12 ) \Dt}
\bigl(e^{-i\tomega \Dt/2} 
+ e^{i\tomega \Dt/2}\bigr)\Bigr|^2 \notag\\
&+\Bigl|\frac 12 e^{ikx-i\tomega(n+\frac 12) \Dt}
\bigl(e^{i\tomega \Dt/2}+ e^{-i\tomega \Dt/2} \bigr)\Bigr|^2 \notag\\
=&\bigl|e^{ikx-i\tomega(n-\frac 12) \Dt}\cos(\tomega \Dt/2)\bigr|^2
+\bigl|e^{ikx-i\tomega(n-\frac 12) \Dt}\cos(\tomega \Dt/2)\bigr|^2 \notag\\
=&2\cos^2(\tomega \Dt/2). \notag
\end{align}

Substituting $u^n = e^{i(kx - \tomega n \Dt)}$ into the Leapfrog scheme \eqref{L-F} 
and using \eqref{lf_1}, we get the following equality:
\begin{align}\label{lf_2}
\frac{i}{ \Dt} \bigl(e^{-i\tomega \Dt}- e^{i\tomega \Dt } \bigr)
=\bigl(k^2 + \lambda\cos^2(\tomega \Dt/2)\bigr)
\Bigl(\frac{e^{-i\tomega \Dt}+1}2 + \frac{e^{i\tomega \Dt}-1}2\Bigr).
\end{align}

Using the identities $ e^{ix}-e^{-ix} = 2i\sin x$, $e^{ix}+e^{-ix} = 2\cos x$, 
$\sin 2\theta = 2\sin \theta \cos \theta$, and $\cos 2\theta = 2\cos^2\theta -1$ in \eqref{lf_2}, 
we then obtain the following numerical dispersion relation equation of the Leapfrog scheme
for the cubic nonlinear Schr\"odinger equation:
\begin{align*}
\frac 2\Dt \sin(\tomega \Dt/2)
= \bigl(k^2 + \lambda\cos^2(\tomega \Dt/2)\bigr) \cos(\tomega \Dt/2).
\end{align*}
Hence, \eqref{lf_0} holds. The proof is complete.
\end{proof}

Table~\ref{table_lf_disp_err} shows the computed dispersion errors and convergence rates 
for the Leapfrog scheme. The numerical results indicate that the Leapfrog scheme also has a second 
order dispersion error.

\begin{table}[htb]\small\centering\footnotesize
\caption{Dispersion error rates of the Leapfrog scheme.}
\begin{tabular*}{\hsize}{@{}@{\extracolsep{\fill}}ccccc@{}}\toprule
$\lambda$         &               $k$& $ \Dt$& Dispersion errors& Error rates\\ 
\midrule
\multirow{4}{*}{2}&\multirow{4}{*}{1}&  1E-01&          0.021354& 	       --\\
				  &             	 &  1E-02& 		2.253333E-04&      1.9767\\
			      &     			 &	1E-03& 		2.266667E-06&      1.9974\\
				  &     			 &	1E-04& 		2.250000E-08&      2.0032\\
\bottomrule
\end{tabular*}
\label{table_lf_disp_err}
\end{table}

\begin{lemma}
The numerical dispersion relations of the modified BDF schemes  
for the cubic nonlinear Schr\"odinger equation \eqref{cnls} are given by 
\begin{align}\label{bdf_0}
\frac 2\Dt \sin(\tomega \Dt/2)
= \bigl(k^2 + \lambda H(\tomega,\Dt)\bigr)\cos(\tomega \Dt/2).
\end{align}
where $H(\tomega,\Dt)$ is given in Table~\ref{table_bdfs_H} below.
\begin{table}[htb]\small\centering\footnotesize
\caption{$H(\tomega,\Dt)$ of the numerical dispersion relation equations of the modified BDF schemes.}
\renewcommand\arraystretch{2}
\resizebox{\textwidth}{!}{
\begin{tabular*}{\hsize}{@{}@{\extracolsep{\fill}}rl@{}}\toprule
M--BDFs&$H(\tomega, \Dt)$ \\ 
\midrule
M--BDF2&$\frac{-1}2 \bigl[3\cos(\tomega\Dt)-5\bigr]$\\
M--BDF3&$\frac{-1}{36}\bigl[47\cos(2\tomega\Dt)-83\bigr]$\\
M--BDF4&$\frac{-1}{72}\bigl[913\cos(\tomega\Dt) - 394\cos(2\tomega\Dt) + 75\cos(3\tomega\Dt) - 666\bigr] $\\
M--BDF5&$\frac{-1}{1800}\bigl[54049 \cos(\tomega\Dt) - 30682\cos(2\tomega\Dt) + 10587\cos(3\tomega\Dt) - 1644\cos(4\tomega\Dt) + 34110\bigr]$ \\
M--BDF6&$\frac{-1}{1800}\bigl[131149\cos(\tomega\Dt) - 85882\cos(2\tomega\Dt) + 39537\cos(3\tomega\Dt) - 11244\cos(4\tomega\Dt) $ \\
	   &\qquad \quad$ + 1470\cos(5\tomega\Dt) - 76830\bigr]$\\
\bottomrule
\end{tabular*}}
\label{table_bdfs_H}
\end{table}
\end{lemma}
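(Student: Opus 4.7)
The plan is to mimic the Leapfrog derivation. I would substitute the discrete plane wave $u^n = e^{i(kx-\tomega n \Dt)}$ into the modified BDF scheme \eqref{BDF}. Since $R^n = \sum_{j=0}^{s-1} \bsj u^{n-j}$, this ansatz factors as $R^n = e^{i(kx-\tomega n \Dt)} P(\theta)$ and $R^{n+1} = e^{-i\tomega\Dt} e^{i(kx-\tomega n \Dt)} P(\theta)$, where $\theta := \tomega \Dt$ and $P(\theta) := \sum_{j=0}^{s-1} \bsj e^{ij\theta}$. The crucial observation is that $|R^n|^2 = |R^{n+1}|^2 = |P(\theta)|^2$ depends only on $\theta$ (not on $x$ or $n$), so the nonlinear coefficient $\tfrac12(|R^n|^2+|R^{n+1}|^2)$ collapses to $|P(\theta)|^2$ and the nonlinear term effectively linearizes on the plane wave.

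Next, I would insert these expressions into \eqref{BDF}. The backward-difference piece gives $\frac{i}{\Dt}(R^{n+1}-R^n) = \frac{i}{\Dt}\, P(\theta)\, e^{i(kx-\tomega n \Dt)}(e^{-i\tomega\Dt}-1)$; the spatial part produces $-\Delta R^{n+1/2} = \frac{k^2}{2} P(\theta)\, e^{i(kx-\tomega n \Dt)}(e^{-i\tomega\Dt}+1)$; and the nonlinear contribution becomes $\lambda\, |P(\theta)|^2\, R^{n+1/2}$. Dividing through by the common nonzero factor $P(\theta) e^{i(kx-\tomega n \Dt)}$ and multiplying by $e^{i\tomega\Dt/2}$, then applying the identities $e^{ix}-e^{-ix}=2i\sin x$ and $e^{ix}+e^{-ix}=2\cos x$, one arrives at \eqref{bdf_0} with $H(\tomega,\Dt)=|P(\theta)|^2$, exactly in parallel with the proofs of the Crank--Nicolson and Leapfrog dispersion relations.

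Finally, to produce the explicit entries of Table~\ref{table_bdfs_H}, I would expand
\[
|P(\theta)|^2 = \sum_{j=0}^{s-1} \beta_{s,j}^2 \,+\, 2\sum_{m=1}^{s-1}\Bigl(\sum_{k=0}^{s-1-m} \beta_{s,k}\beta_{s,k+m}\Bigr)\cos(m\tomega\Dt),
\]
using the coefficients $\bsj$ from Table~\ref{table_alpha_beta} for each $s=2,3,4,5,6$, and collect the resulting rational coefficients. The derivation of \eqref{bdf_0} itself is independent of $s$; only the closed-form of $H(\tomega,\Dt)$ changes from row to row, and it follows by direct computation. The principal obstacle is simply the algebraic bookkeeping in this last step — especially for $s=6$, which involves six squared terms and fifteen cross-pair terms — so in practice one would verify the formulas with a symbolic-algebra system to avoid arithmetic slips.
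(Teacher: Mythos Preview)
Your approach is correct and follows the same underlying strategy as the paper: substitute the discrete plane wave, observe that the nonlinear coefficient $\tfrac12(|R^n|^2+|R^{n+1}|^2)$ is a constant independent of $x$ and $n$, and reduce the scheme to a scalar relation that simplifies via $e^{ix}\pm e^{-ix}$ identities. The difference is purely organizational: the paper treats M--BDF2 and M--BDF3 separately, computing $|R^n|^2$ by hand in each case and cancelling the scheme-specific nonzero factor (e.g.\ $3e^{-i\tomega\Dt/2}-e^{i\tomega\Dt/2}$ for $s=2$), then declaring the remaining cases ``similar''; you instead isolate the factorization $R^n=P(\theta)\,e^{i(kx-\tomega n\Dt)}$ once, which makes the cancellation and the identification $H(\tomega,\Dt)=|P(\theta)|^2$ uniform in $s$ and reduces the table entries to a single cosine expansion of $|P(\theta)|^2$. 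This buys a cleaner, $s$-independent derivation of \eqref{bdf_0} at the cost of nothing; the paper's version makes the concrete algebra for small $s$ more visible but does not add any new idea.
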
 

\begin{proof}
We only present the proofs of the numerical dispersion relation equations 
for the modified BDF2 and  BDF3 schemes because the proofs for the remaining ones are similar.

We first consider the modified BDF2 scheme. Setting $u^n = e^{i(kx - \tomega n \Dt)}$ 
and using the identities 
$e^{ix}+e^{-ix} = 2\cos x$ and $\cos2\theta = 2\cos^2\theta -1$ 
in $\bigl|\frac 12 \bigl(3u^n-u^{n-1}\bigr)\bigr|^2$, we get 
\begin{align}\label{bdf2_1}
\Bigl| \frac{3u^n-u^{n-1}}2 \Bigr|^2
=& \Bigl|\frac 12 e^{ikx-i\tomega(n-\frac 12 ) \Dt}
\bigl(3e^{-i\tomega \Dt/2} -e^{i\tomega \Dt/2} \bigr)\Bigr|^2\\
=&\Bigl|e^{ikx-i\tomega(n-\frac 12) \Dt}
\bigl(\cos(\tomega \Dt/2) -2i\sin(\tomega \Dt/2)\bigr)\Bigr|^2 \notag\\
=&\cos^2(\tomega \Dt/2) + 4\sin^2(\tomega \Dt/2) \notag\\
=&\frac 12 \bigl(5 - 3\cos(\tomega\Dt)\bigr).\notag
\end{align}
A similar calculation applies to $\bigl|\frac 12 \bigl(3u^{n+1}-u^n \bigr)\bigr|^2$. Substituting 
$u^n = e^{i(kx - \tomega n \Dt)}$ into the modified BDF2 scheme \eqref{BDF}
and using \eqref{bdf2_1} yield 
\begin{align}\label{bdf2}
\frac i\Dt &\Bigl(\frac{3u^{n+1}-4u^n+u^{n-1}}2 \Bigr)
+\frac{3\Delta u^{n+1}+ 2\Delta u^n-\Delta u^{n-1}}4 \\
&=\frac \lambda 2 \Bigl(\Bigl|\frac{3u^{n+1}-u^n}2\Bigr|^2 + \Bigl|\dfrac{3u^n-u^{n-1}}2\Bigr|^2\Bigr)
\Bigl(\frac{3\Delta u^{n+1}+ 2\Delta u^n-\Delta u^{n-1}}4\Bigr), \notag \\
 %\label{bdf2_2}
&=\frac{i}{ \Dt}\Bigl[3(e^{-i\tomega \Dt}-1) + (e^{i\tomega \Dt}-1)\Bigr] \notag \\
&=\frac 12 \bigl(k^2 + \lambda H_1(\omega, \Dt)\bigr)
\Bigl[3 (e^{-i\tomega \Dt}+1) - (e^{i\tomega \Dt}+1)\Bigr],\notag
\end{align}
where $H_1(\omega,\Dt) =\frac 12 \bigl(5-3 \cos(\tomega \Dt)\bigr)$.

It is easy to obtain the following equation from \eqref{bdf2_1}:  
\begin{align}\label{bdf2_3}
\frac{2i}{\Dt}\Bigl[3 (&e^{-i\tomega \Dt/2} - e^{i\tomega \Dt/2})e^{-i\tomega \Dt/2}
+(e^{i\tomega \Dt/2} - e^{-i\tomega \Dt/2})e^{i\tomega \Dt/2}\Bigr]\\
=&3(k^2 + \lambda H_1(\omega, \Dt))(e^{-i\tomega \Dt/2}
+ e^{i\tomega \Dt/2})e^{-i\tomega \Dt/2}
\notag \\
&-(k^2 + \lambda H_1(\omega, \Dt))(e^{i\tomega \Dt/2}
+e^{-i\tomega \Dt/2})e^{i\tomega \Dt/2}.
\notag
\end{align}
Using the identities $ e^{ix}-e^{-ix} = 2i\sin x$ and 
$ e^{ix}+e^{-ix} = 2\cos x $ in \eqref{bdf2_3}, we get 
\begin{align}\label{bdf2_4}
\frac 2\Dt&\sin(\tomega \Dt/2)
\bigl(3e^{-i\tomega \Dt/2} - e^{i\tomega \Dt/2}\bigr)\\
&=\bigl(k^2 + \lambda H_1(\omega,  \Dt)\bigr) \cos(\tomega \Dt/2)
(3e^{-i\tomega \Dt/2} - e^{i\tomega \Dt/2}).\notag
\end{align}

Since $3e^{-i\tomega \Dt/2} -e^{i\tomega \Dt/2} \ne 0$ in \eqref{bdf2_4}, 
the numerical dispersion relation  of the modified BDF2 scheme can be written as
\begin{align}\label{bdf2_5}
\frac 2\Dt \sin(\tomega \Dt/2)
=& \bigl(k^2 + \lambda H_1(\omega, \Dt)\bigr)\cos(\tomega \Dt/2)\\
=&\bigl(k^2 +\frac \lambda 2 (5 - 3\cos(\tomega \Dt))\bigr)\cos(\tomega \Dt/2).\notag
\end{align}
The desired equation \eqref{bdf_0} holds by letting $H(\omega, \Dt) = H_1(\omega, \Dt)$. 

Next, we consider the modified BDF3 scheme. Using the identities 
 $e^{ix}+e^{-ix} = 2\cos x $ and 
  $\cos{2\theta} = 2\cos^2{\theta} -1 $ 
%in $\bigl|\frac 16 \bigl(11u^n-7u^{n-1}+2u^{n-2}\bigr)\bigr|^2$ 
we get
\begin{align}\label{bdf3_1}
&\Bigl|\frac{11u^n-7u^{n-1} +2u^{n-2}}6 \Bigr|^2
= \Bigl|\frac 16 e^{ikx-i\tomega(n-1)\Dt}\bigl(11e^{-i\tomega \Dt}- 7 + 2e^{i\tomega \Dt}\bigr)\Bigr|^2 \\
&\hskip 1in = \Bigl|\frac 16 e^{ikx-i\tomega(n-1)\Dt}\bigl(13\cos(\tomega \Dt)-
7- 9i\sin(\tomega \Dt)\bigr)\Bigr|^2 \notag \\
&\hskip 1in =\frac{1}{18}\bigl(44\cos^2(\tomega \Dt)-91\cos^2(\tomega \Dt) +65\bigr) \notag\\
&\hskip 1in =\frac{1}{36}\bigl(83 - 47\cos(2\tomega \Dt)\bigr).\notag
\end{align}

Substituting $u^n = e^{i(kx - \tomega n \Dt)}$ into the modified BDF3 scheme, we obtain 
\begin{align}\label{bdf3_2}
&\frac i\Dt \Bigl[
\frac{11 (e^{-i\tomega \Dt}-1)+9(e^{i\tomega \Dt}-1)-2(e^{2i\tomega \Dt}-1)}{6}\Bigr]\\
&\quad =\frac 12 \bigl(k^2 + \lambda H_2(\omega, \Dt)\bigr)
\frac{11 (e^{-i\tomega \Dt}-1)-5(e^{i\tomega \Dt}-1)+2(e^{2i\tomega \Dt}-1)-4}{6}, \notag 
\end{align}
where $H_2(\omega, \Dt) = \frac{1}{36}\bigl(83 - 47 \cos(2\tomega \Dt)\bigr)$.

Similar to the derivation of \eqref{bdf2_4},    
using identities $e^{ix}-e^{-ix} = 2i\sin x$, $e^{ix}+e^{-ix} = 2\cos x$, $e^{ix}-e^{-ix} = 2i\sin x$, 
and $e^{ix}+e^{-ix} = 2\cos x$ in \eqref{bdf3_2}, we get 
\begin{align}\label{bdf3_3}
&\sin(\tomega \Dt/2)(11e^{-i\tomega \Dt/2} - 9e^{i\tomega \Dt/2})
+2\sin(\tomega \Dt)e^{i\tomega \Dt}\\
&\qquad = \cos(\tomega \Dt/2)(11e^{-i\tomega \Dt/2} - 5e^{i\tomega \Dt/2})
+2\cos(\tomega \Dt)e^{i\tomega \Dt} - 2\notag\\
&\sin(\tomega \Dt/2)(11e^{-i\tomega \Dt/2} - 9e^{i\tomega \Dt/2}
+ 4\cos(\tomega \Dt/2)e^{i\tomega \Dt}) \label{bdf3_4} \\
&\qquad =\cos(\tomega \Dt/2)(11e^{-i\tomega \Dt/2} - 9e^{i\tomega \Dt/2}
+ 4\cos(\tomega \Dt/2)e^{i\tomega \Dt}),\notag
\end{align}
where we have used the following identity in \eqref{bdf3_4}:
\begin{align*}
2\cos(\tomega \Dt)e^{i\tomega \Dt}-2
&=2(2\cos^2(\tomega \Dt/2)-1)e^{i\tomega \Dt}\notag\\
&=4\cos^2(\tomega \Dt/2)e^{i\tomega \Dt} - 2(e^{i\tomega \Dt}+1)\\
&=4\cos^2(\tomega \Dt/2)e^{i\tomega \Dt} - 4\cos(\tomega \Dt/2)e^{i\tomega \Dt/2}.
\end{align*}

Since $(11e^{-i\tomega \Dt/2} - 9e^{i\tomega \Dt/2} + 4\cos{(\tomega \Dt/2)}e^{i\tomega \Dt})\ne 0$
in \eqref{bdf3_4}, the numerical dispersion relation for the modified BDF3 can be written as
\begin{align}\label{bdf3_5}
\frac{2}{ \Dt}\sin(\tomega \Dt/2)
=& \bigl(k^2 + \lambda H_2(\omega,  \Dt)\bigr)
\cos(\tomega \Dt/2)\\
=& \bigl(k^2 + \frac{\lambda}{36}(83 - 47\cos(2\tomega \Dt))\bigr)
\cos(\tomega \Dt/2), \notag
\end{align}
which gives \eqref{bdf_0} after setting $H(\omega, \Dt) = H_2(\omega,\Dt)$. 
The proof is complete.
\end{proof}

\begin{table}[htb]\small\centering\footnotesize
\caption{Dispersion error rates of M-BDF schemes.}
\begin{tabular*}{\hsize}{@{}@{\extracolsep{\fill}}cccccc@{}}\toprule
Modified BDFs           &$\lambda$          &                $k$&$ \Dt$&Dispersion errors&Error rates\\ 
\midrule
\multirow{3}{*}{M--BDF2}&\multirow{19}{*}{2}&\multirow{19}{*}{1}& 1E-02& 	 3.752127E-04&         --\\
	   		      	    &				    &     			    & 1E-03& 	 3.750001E-06&     2.0002\\
	   		      	    &				    &     			    & 1E-04&  	 3.750000E-08&     2.0000\\
\\			  
\multirow{3}{*}{M--BDF3}&			        &                   & 1E-02&     2.467750E-05& 	       --\\
	   		      	    &				    &             	    & 1E-03&     2.499677E-05&     1.9944\\
	   		      	    &				    &     			    & 1E-04& 	 2.499996E-07&     2.0000\\
\\ 	
\multirow{3}{*}{M--BDF4}&				    &				    & 1E-02& 	 2.521704E-05&         --\\
	   		      	    &				    &     			    & 1E-03& 	 2.500217E-07&     2.0037\\
	   		      	    &				    &     			    & 1E-04& 	 2.500002E-09&     2.0000\\
\\
\multirow{3}{*}{M--BDF5}&				    &				    & 1E-02& 	 2.500172E-05&         --\\
	   		      	    &				    &     			    & 1E-03& 	 2.500001E-07&     2.0000\\
	   		      	    &				    &     			    & 1E-04& 	 2.500000E-09&     2.0000\\
\\ 	
\multirow{3}{*}{M--BDF6}&				    &				    & 1E-02& 	 2.500124E-05&         --\\
	   		      	    &				    &     			    & 1E-03& 	 2.500001E-07&     2.0000\\
	   		      	    &				    &     			    & 1E-04& 	 2.500005E-09&     2.0000\\
\bottomrule
\end{tabular*}
\label{table_bdfs_disp_err}
\end{table}

Table~\ref{table_bdfs_disp_err} shows the computed dispersion errors and convergence rates 
for the modified BDF schemes. The numerical results indicate that these modified BDF schemes have a second 
order dispersion error.

%%%%%%
\section{Numerical experiments: validating the convergence rates.} \label{sec-7}
In this section, we present several 1D numerical tests to illustrate our theoretical results, in particular, 
to verify the rates of convergence of the proposed time-stepping schemes. Our computations are done using the 
software package FEniCS and the linear finite element method is employed for the spatial discretization 
in all our numerical tests.

We consider the cubic nonlinear Schr\"odinger equation \cite{Taghizadeh2011Exact, Xu2005Local,  bao2013numerical}
 (i.e., $f(s) = s$, $\lambda = -2$)
\begin{alignat}{2}\label{nls_sol}
i u_t +\Delta u +2|u|^2 u &=0, &&\qquad t>0,\\
u (0) &= u _0, && \label{nls_solb}
\end{alignat}
where initial data  $u_0 = \sech(x)\exp(2ix)$ is chosen so that the
exact solution is given by \cite{Lin2017Numerical}
\begin{align}\label{exact_sol}
u(x,t) = \sech(x-4t)\exp(i(2x-3t)).
\end{align}

%%%% CN
\begin{figure}[htb]	
\centering
\subfigure[]{
\label{fig_cn_m_e:a} 
\includegraphics[height=1.8in,width=2.4in]{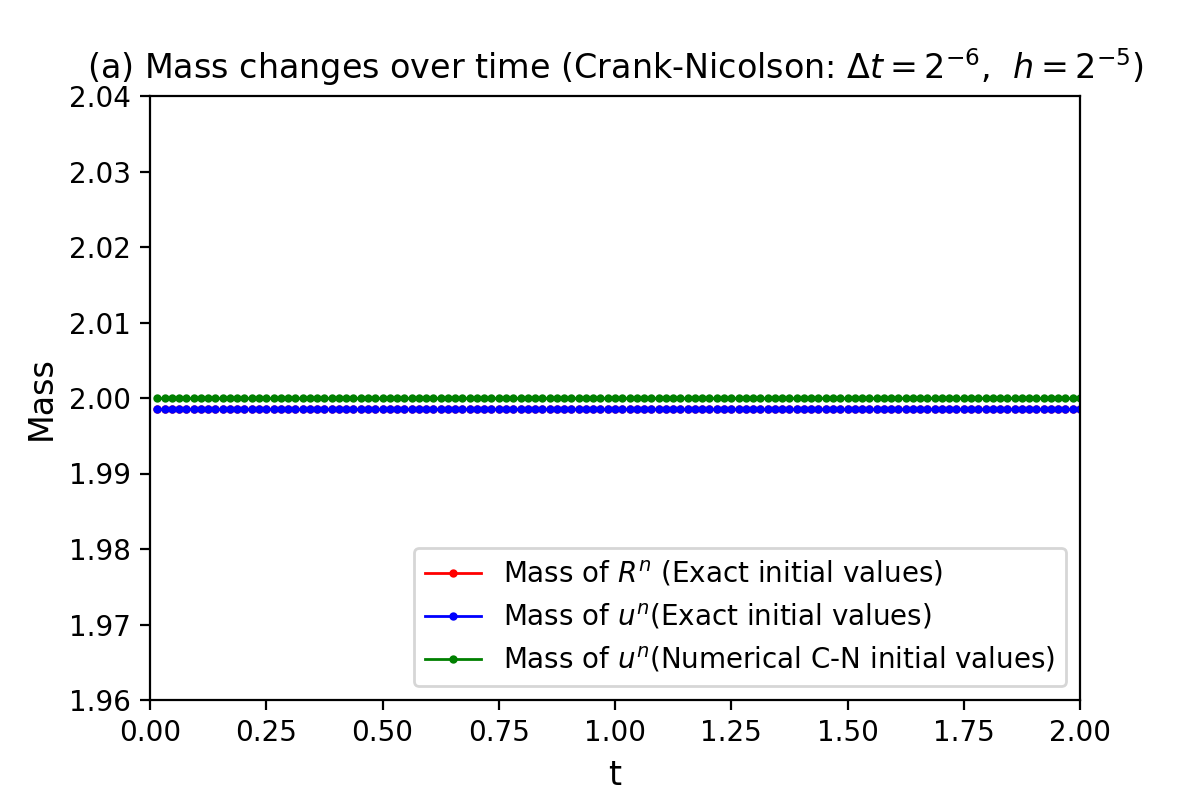}}
\subfigure[]{
\label{fig_cn_m_e:b} 
\includegraphics[height=1.8in,width=2.4in]{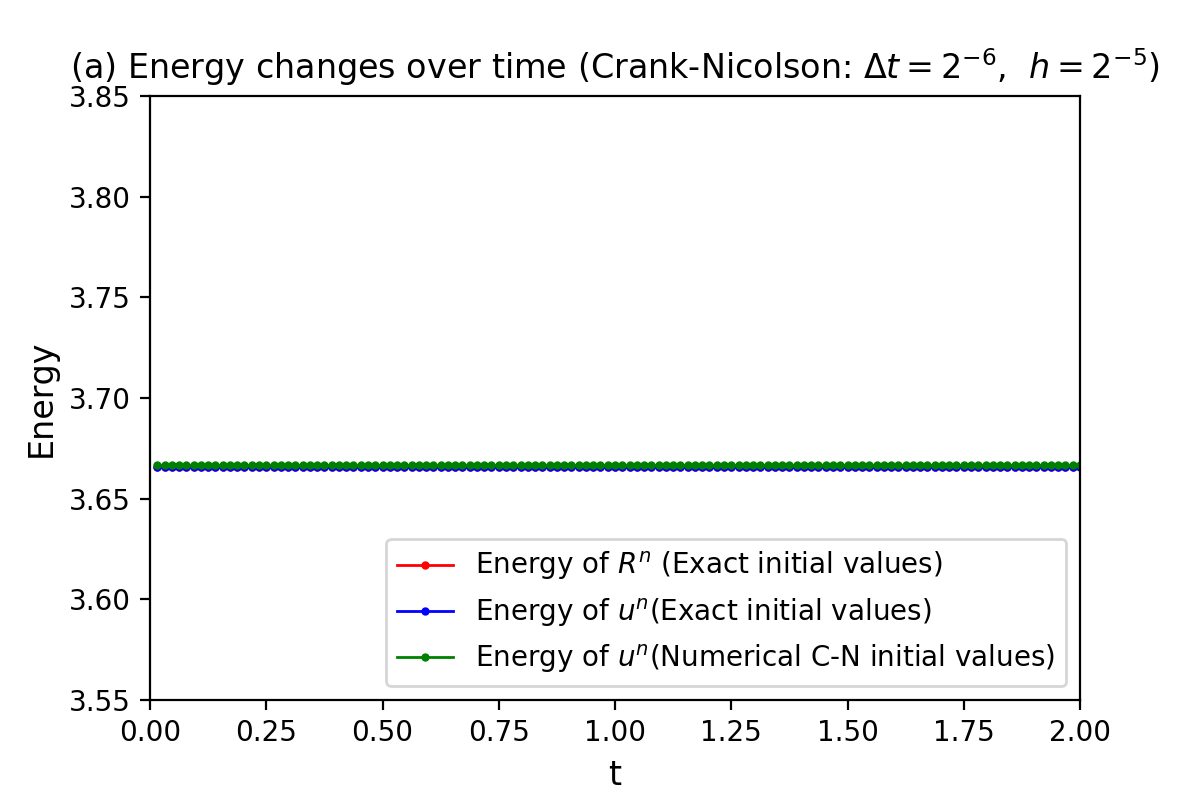}}
\subfigure[]{
\label{fig_cn_m_e:c} 
\includegraphics[height=1.8in,width=2.4in]{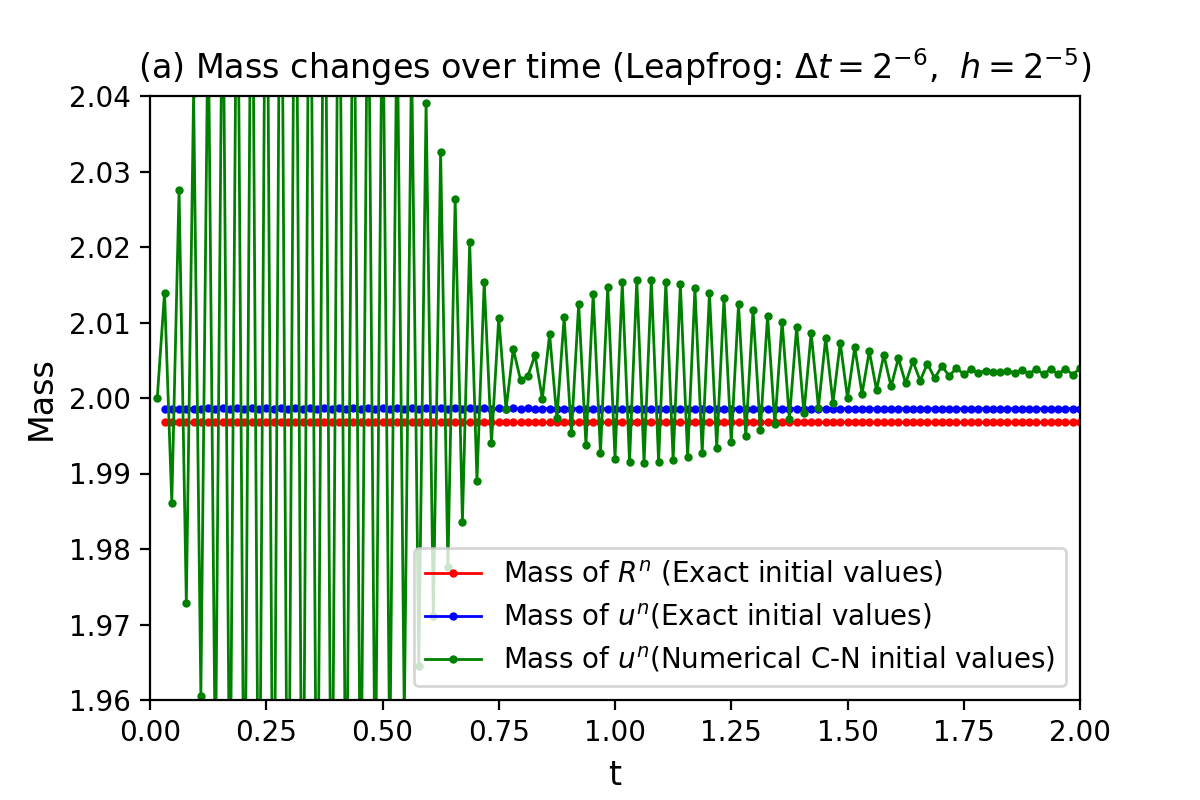}}
\subfigure[]{
\label{fig_cn_m_e:d} 
\includegraphics[height=1.75in,width=2.4in]{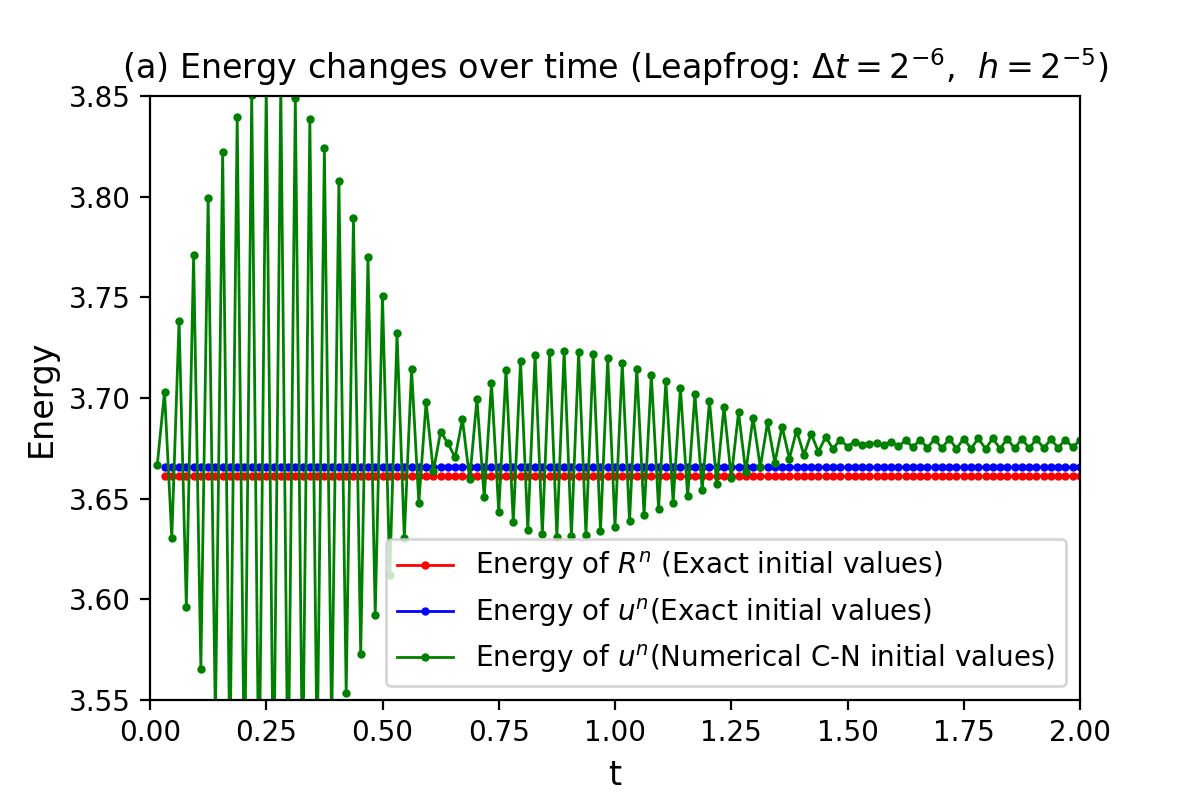}}
\subfigure[]{
\label{fig_bdf2_m_e:a} 
\includegraphics[height=1.75in,width=2.4in]{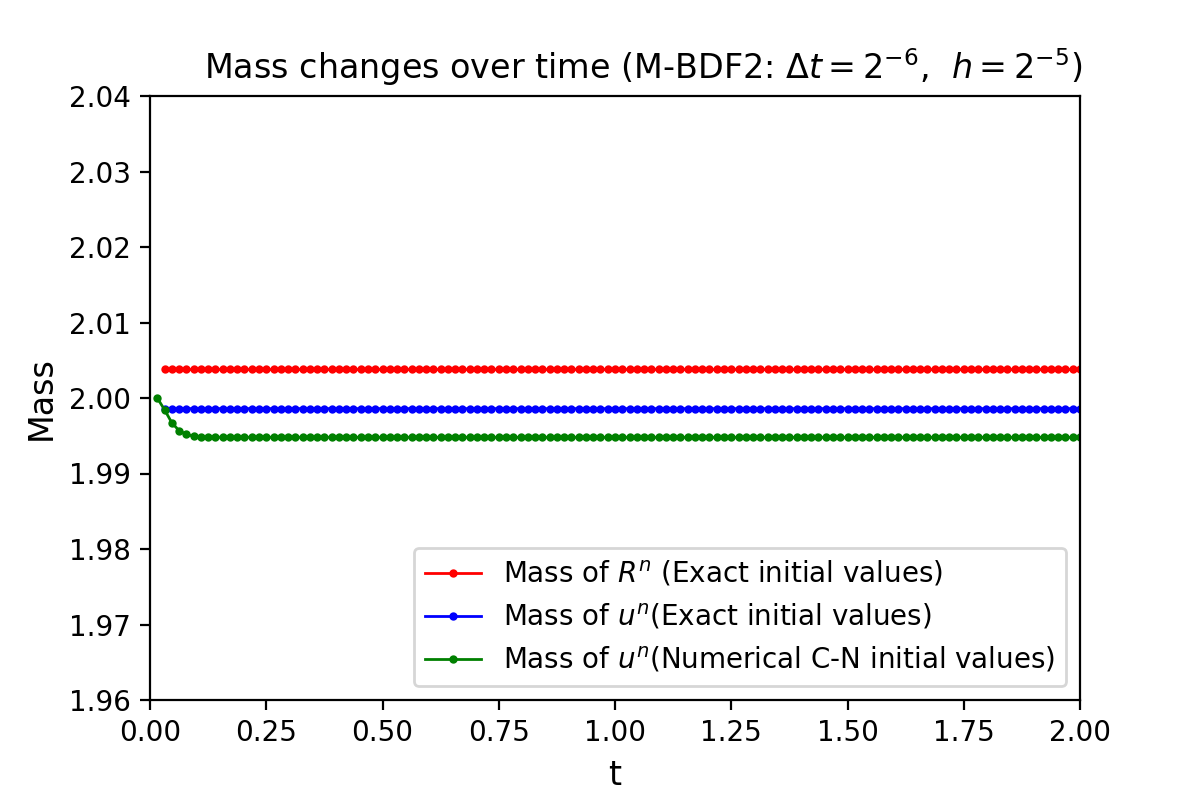}}
\subfigure[]{
\label{fig_bdf2_m_e:b} 
\includegraphics[height=1.75in,width=2.4in]{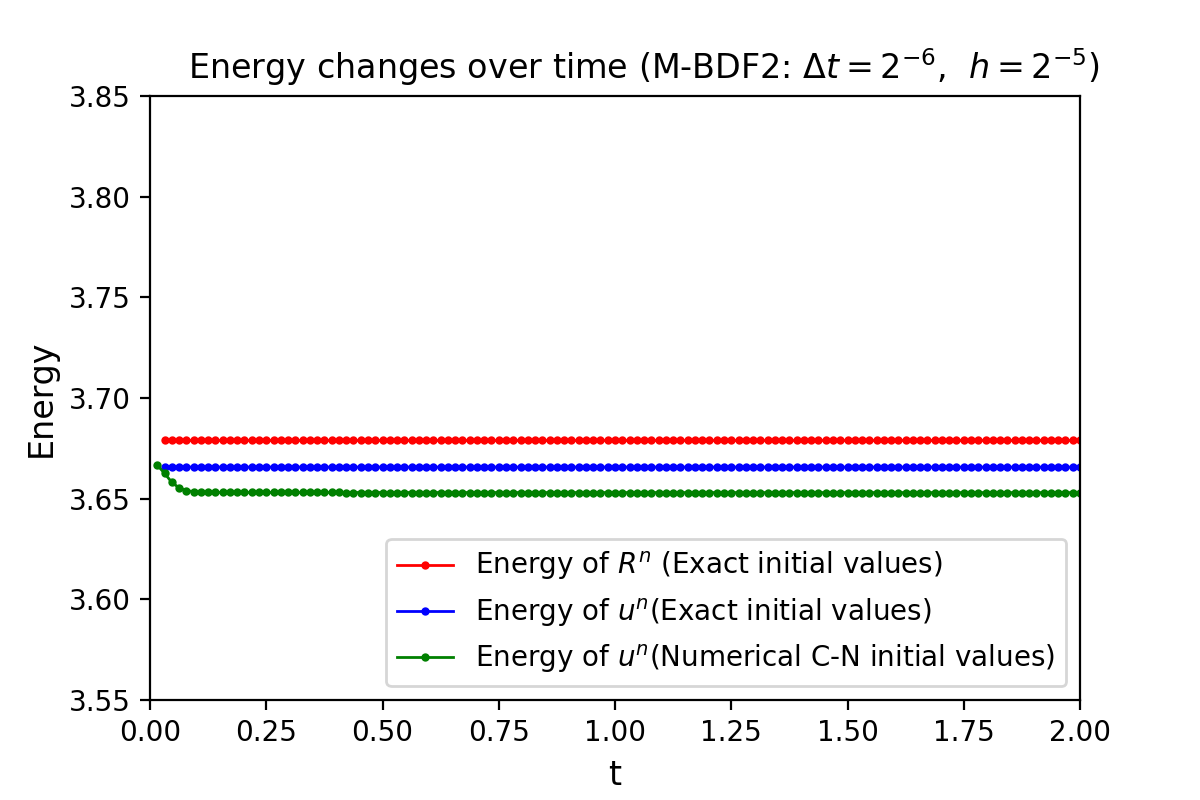}}
\subfigure[]{
\label{fig_bdf3_m_e:a} 
\includegraphics[height=1.75in,width=2.4in]{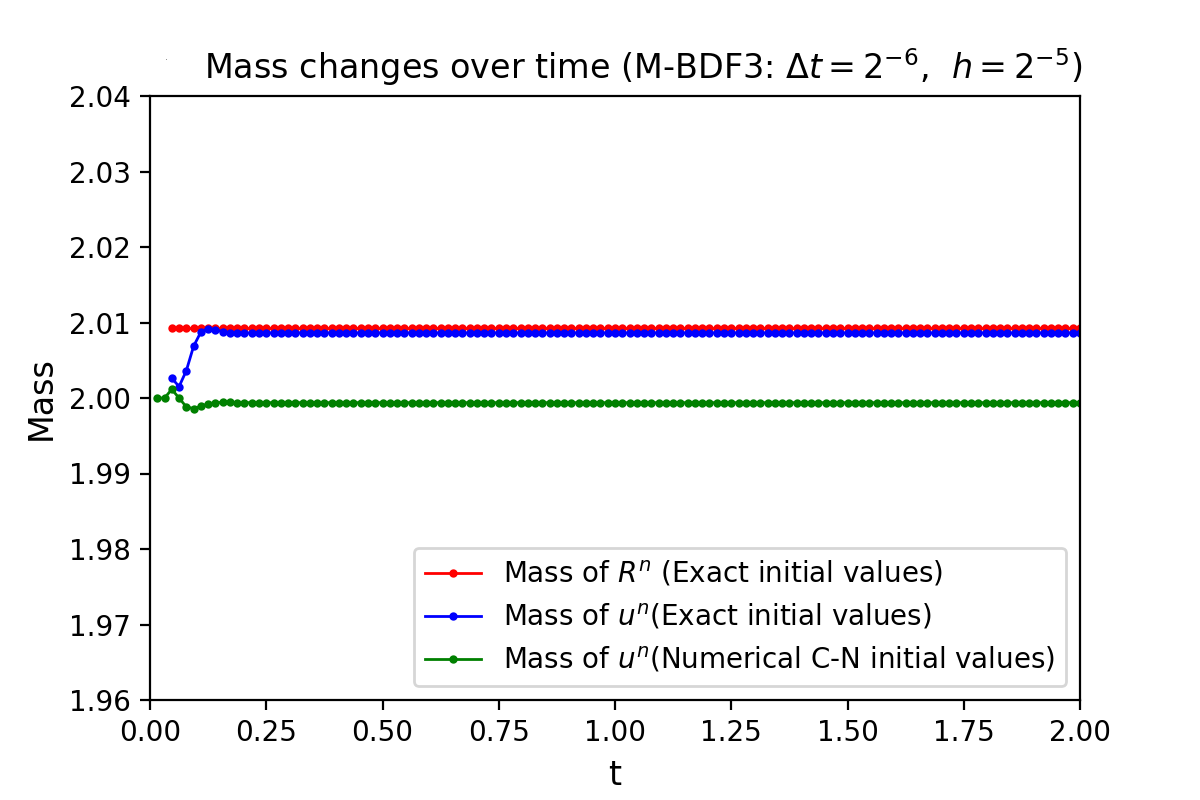}}
\subfigure[]{
\label{fig_bdf3_m_e:b} 
\includegraphics[height=1.75in,width=2.4in]{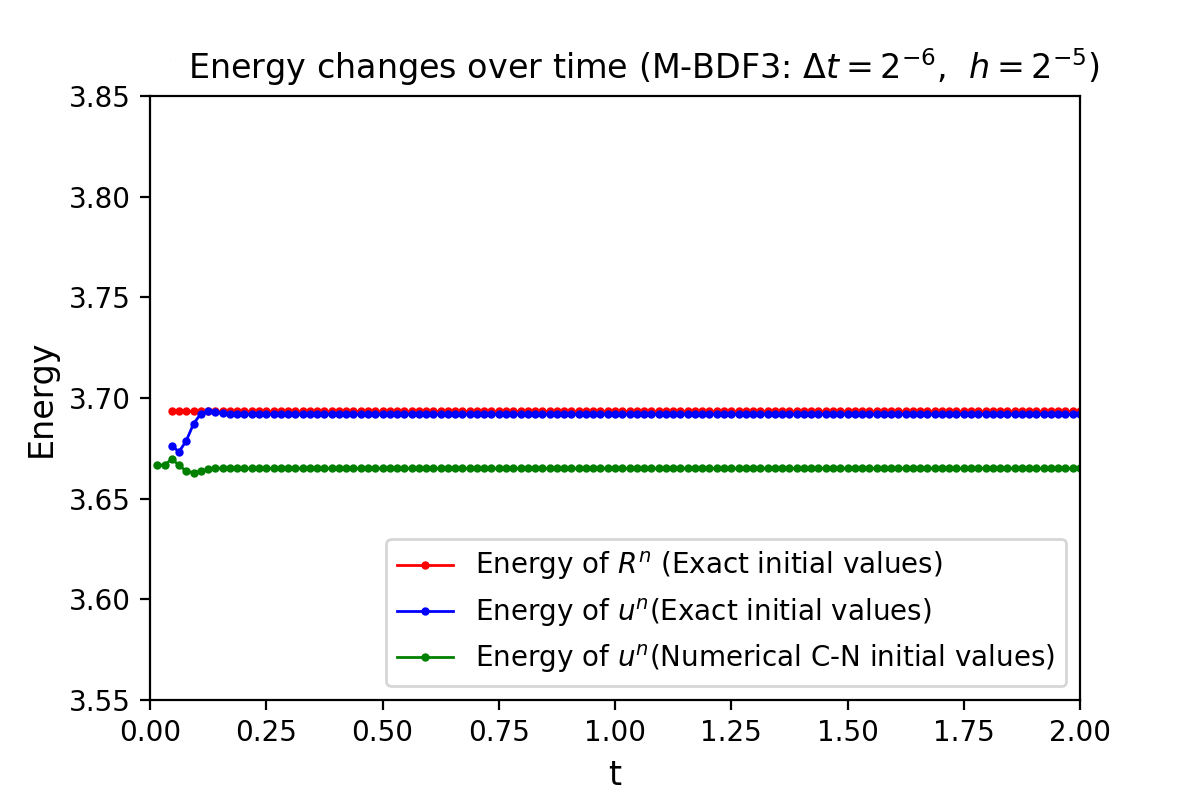}}
\caption{The computed mass  (left) and energy (right) of the right propagation problem by 
the Crank-Nicolson scheme, Leapfrog scheme, M-BDF2 scheme and M-BDF3 scheme  
with $(h=2^{-5}$ and $ \Dt =2^{-6})$. }
\label{fig_cn_m_e}
\end{figure}

%%%% BDF3
\begin{figure}[htb]
\centering
\subfigure[]{
\label{fig_bdf4_m_e:a} 
\includegraphics[height=1.75in,width=2.4in]{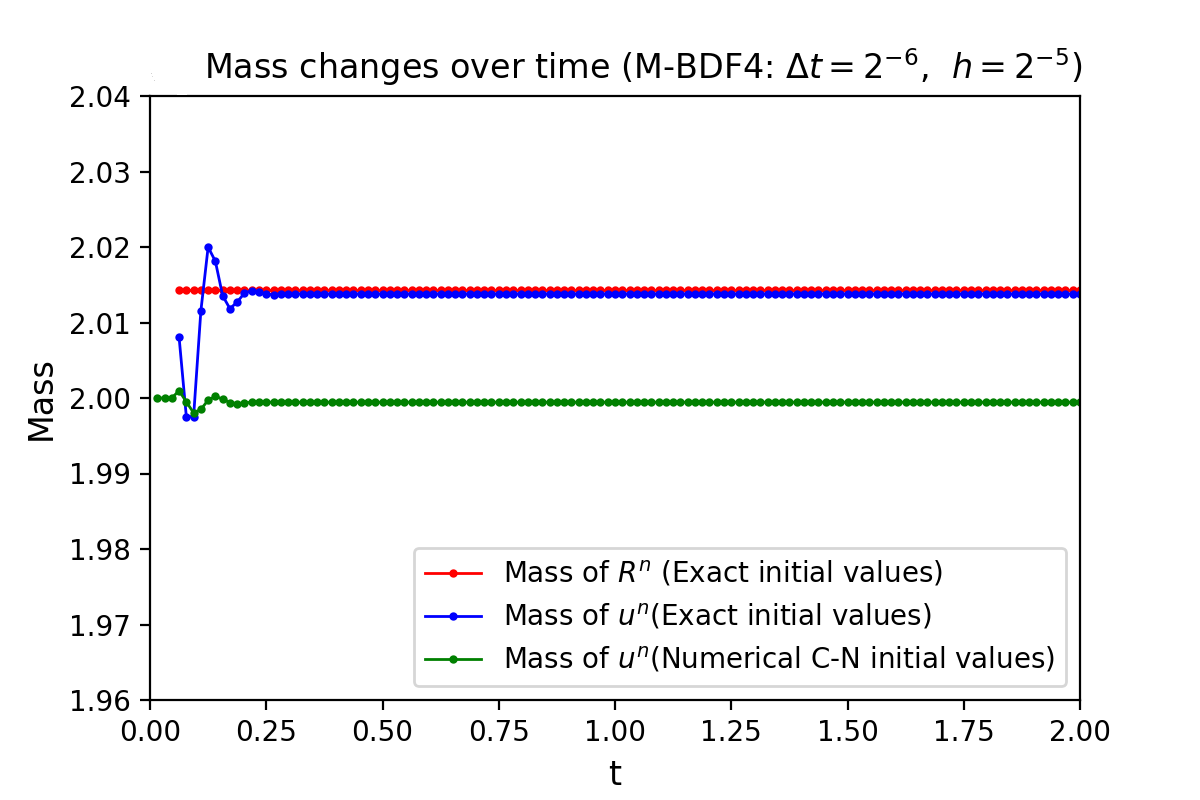}}
\subfigure[]{
\label{fig_bdf4_m_e:b} 
\includegraphics[height=1.75in,width=2.4in]{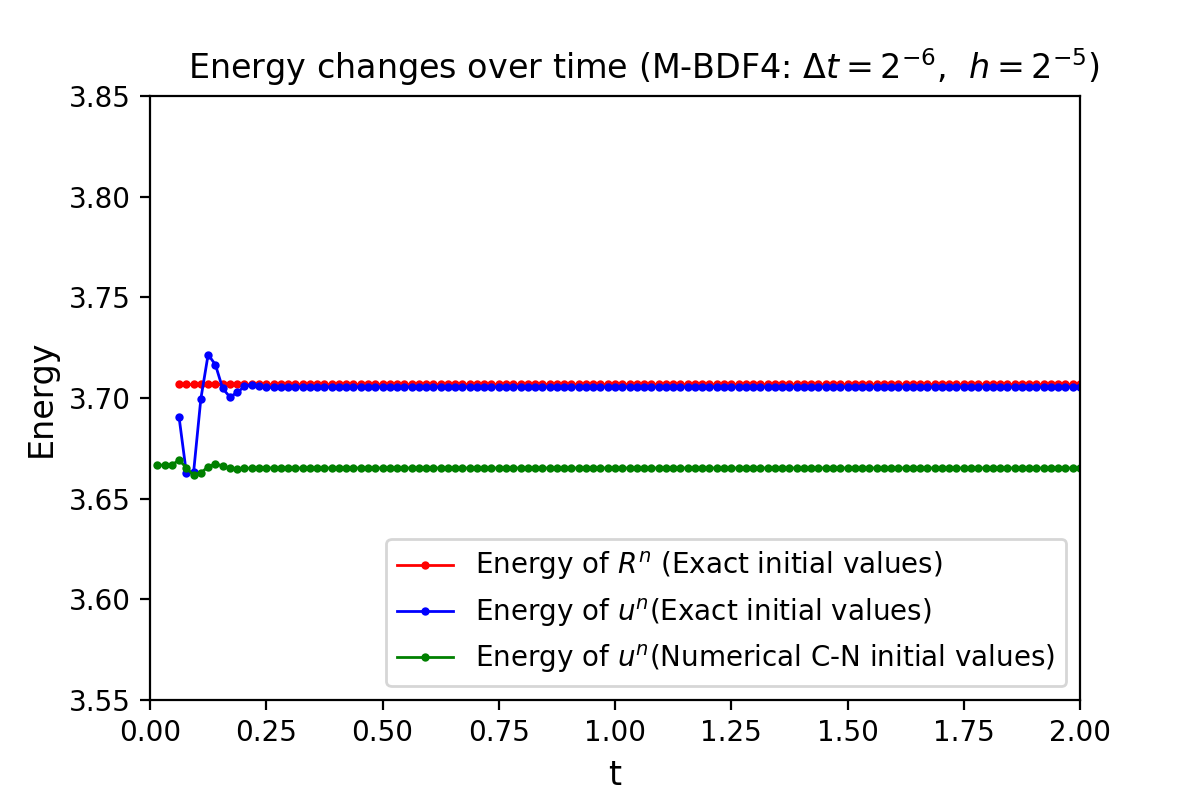}}
\subfigure[]{
\label{fig_bdf5_m_e:a} 
\includegraphics[height=1.75in,width=2.4in]{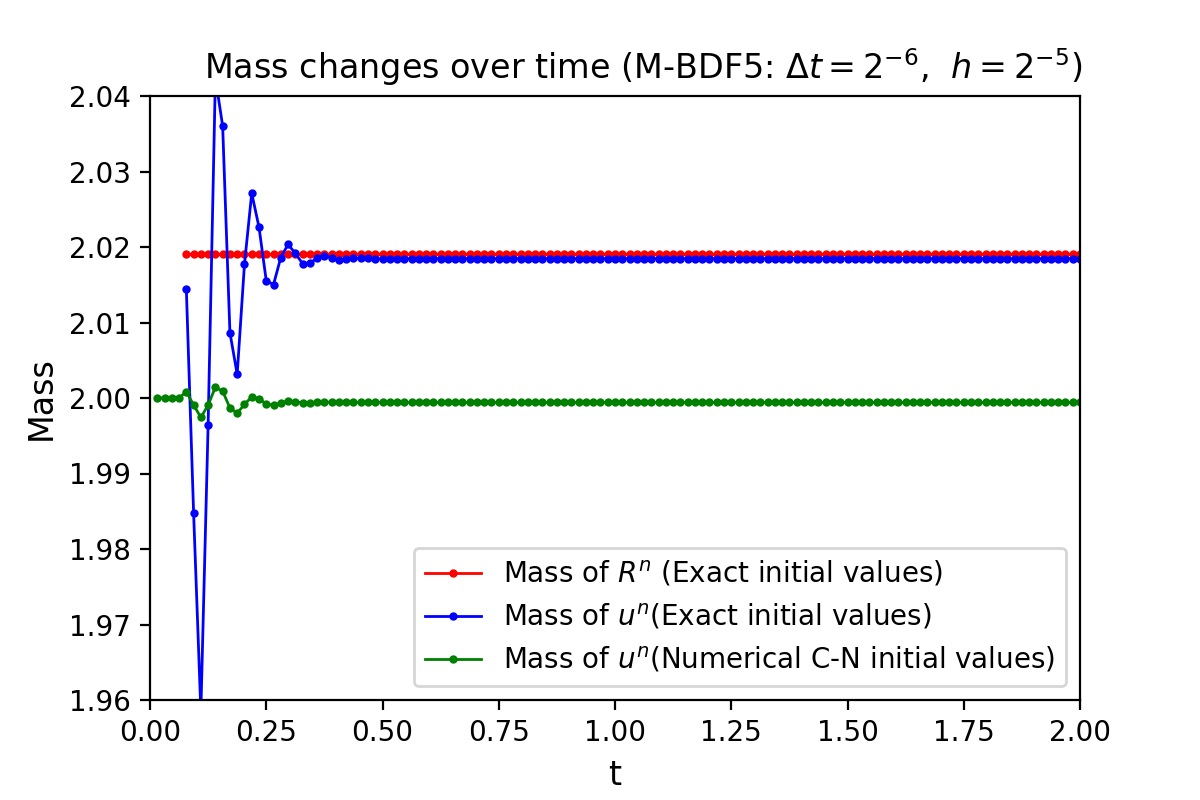}}
\subfigure[]{
\label{fig_bdf5_m_e:b} 
\includegraphics[height=1.75in,width=2.4in]{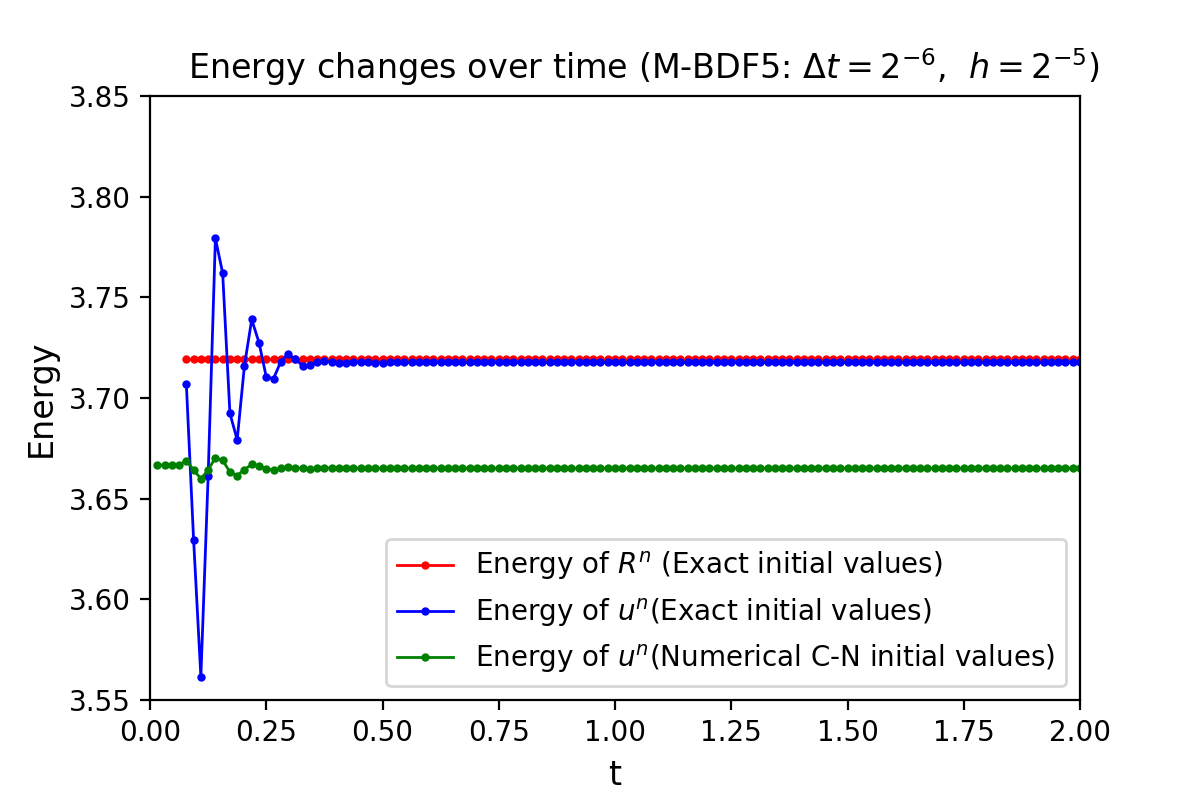}}
\subfigure[]{
\label{fig_bdf6_m_e:a} 
\includegraphics[height=1.75in,width=2.4in]{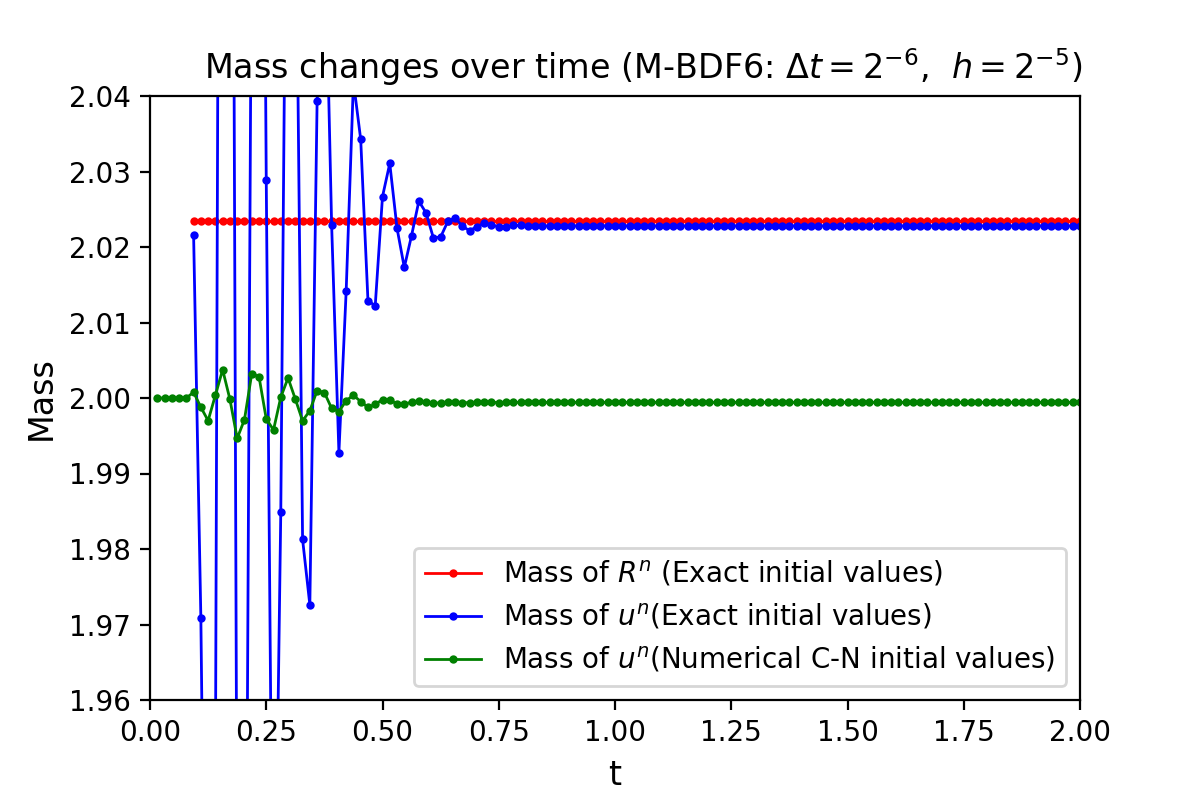}}
\subfigure[]{
\label{fig_bdf6_m_e:b} 
\includegraphics[height=1.75in,width=2.4in]{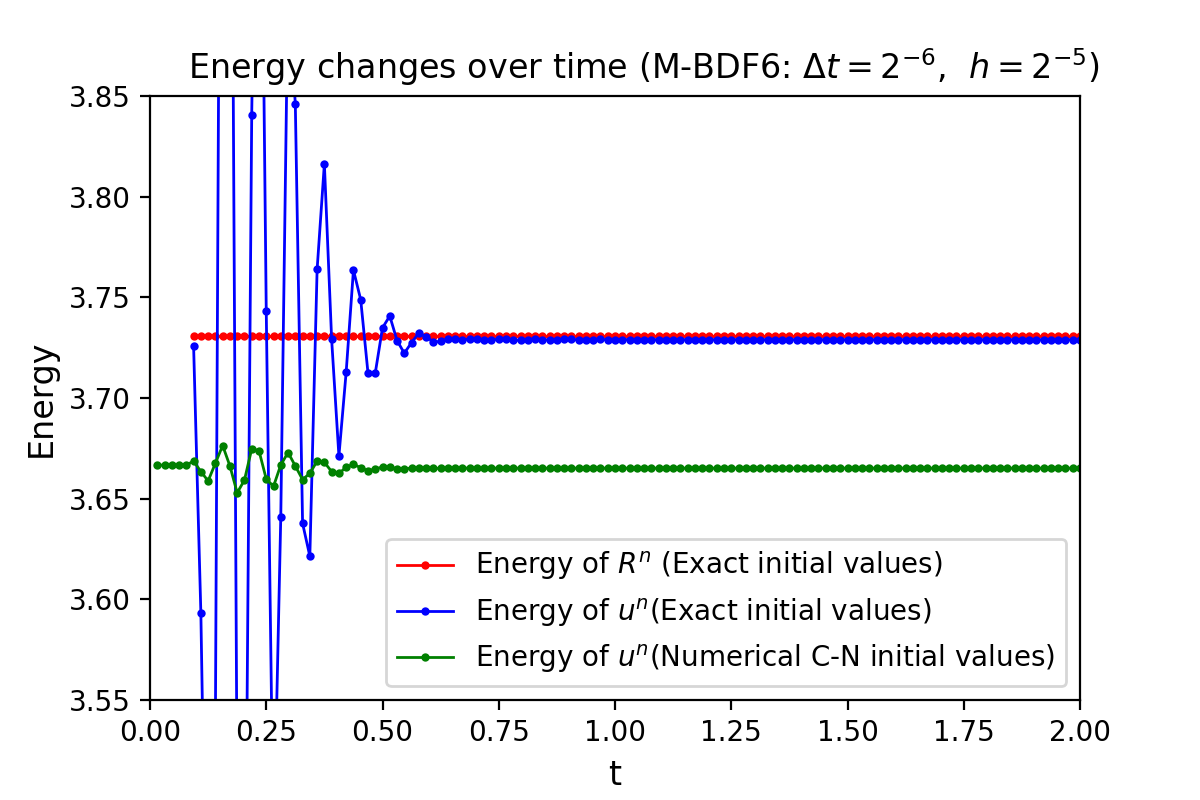}}
\subfigure[]{
\label{fig_bdfs_m_e:a} 
\includegraphics[height=1.75in,width=2.4in]{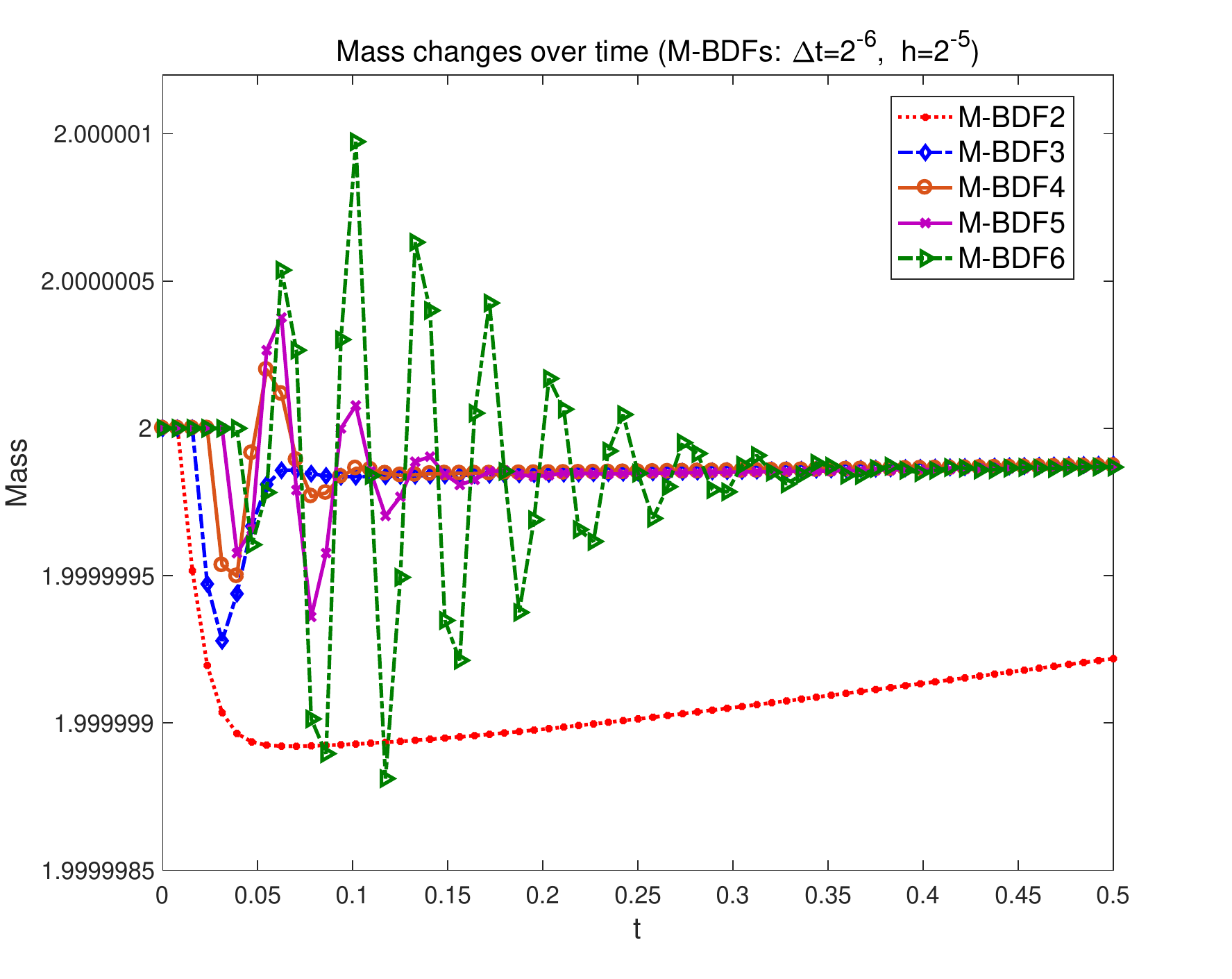}}
\subfigure[]{
\label{fig_bdfs_m_e:b} 
\includegraphics[height=1.75in,width=2.4in]{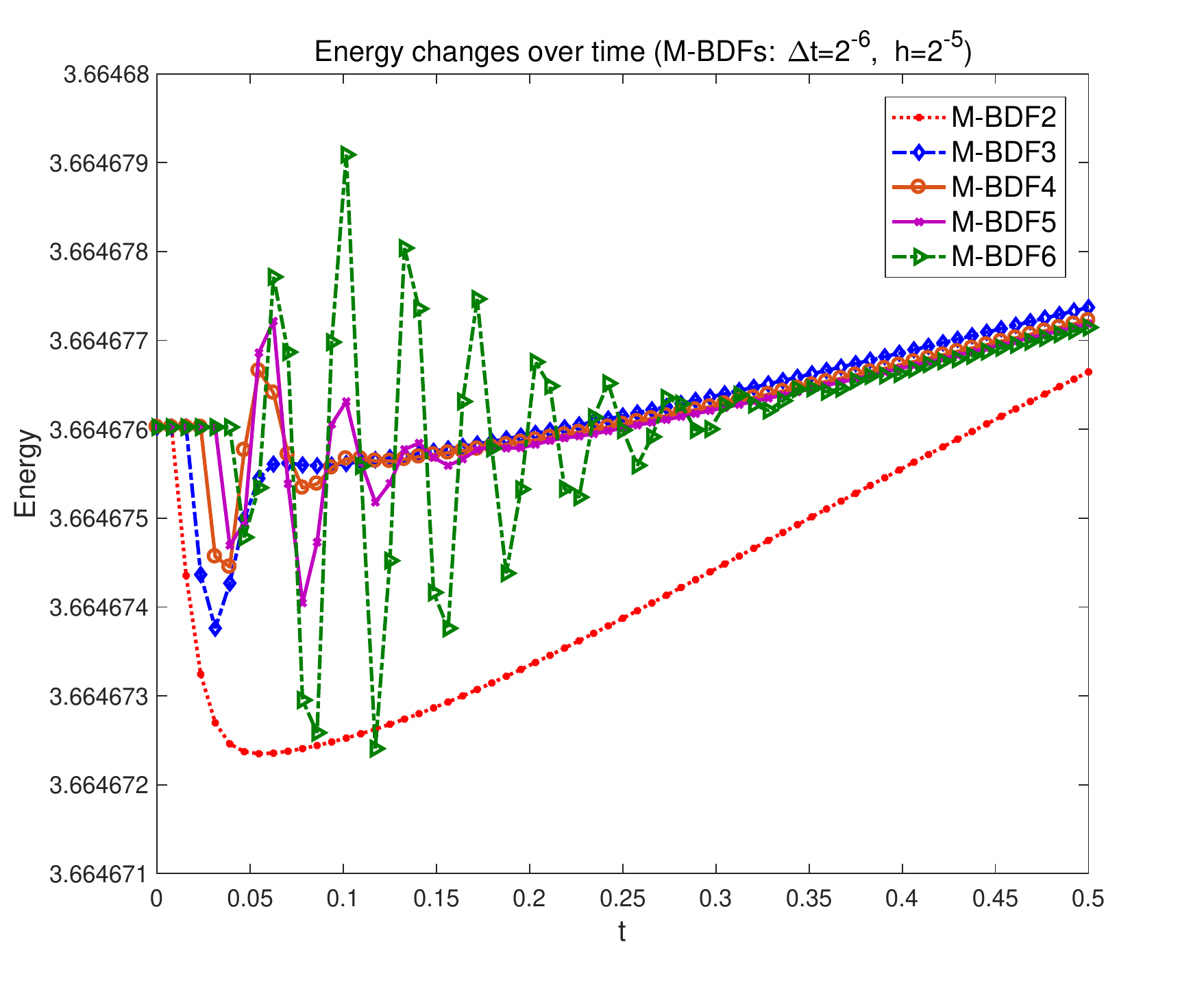}}
\caption{The computed mass (left) and energy (right) of the right propagation problem by 
the modified BDF schemes (s=3,4,5) with $(h=2^{-5}$ and  $\Dt =2^{-6})$,  
and the comparison of the solutions obtained 
by different modified BDF schemes (the zoom-in figures from $t=0$ to $t=0.5$). }
\label{fig_bdf_m_e}
%%% BDFs
\end{figure}

%\FloatBarrier
%The solution is computed with periodic boundary condition in $[-20, 20]$.
We solve problem  \eqref{nls_sol}--\eqref{nls_solb} with the periodic boundary condition in $[-20, 20]$
by a few selected schemes from the family of  the time--stepping schemes  proposed in Section~\ref{sec-3}.  
In  Theorem~\ref{Co:conservation}, we proved the mass- 
and energy-conservation properties of scheme \eqref{scheme_1}--\eqref{scheme_2} 
without any restrictions on $h$ and $\Dt$. 
To show the conservation properties of problem \eqref{nls_sol}--\eqref{nls_solb},
uniform spatial and temporal  meshes are used with $h = 2^{-5}$ and $\Dt = 2^{-6}$ 
and the discrete mass and energy are defined in \eqref{dis_mh}.

The evolution of the $L^2$-norm of $R^n$ and $u^n$ for one trajectory of 
the modified Crank-Nicolson scheme and Leapfrog scheme are shown in Figure~\ref{fig_cn_m_e}.
For the Crank-Nicolson scheme, we also show in Figure~\ref{fig_cn_m_e:a} that the evolution  
of the $L^2$-norm is exactly conserved although at each time step, the nonlinear equation 
is not exactly solved. Meanwhile, the energy is also exactly conserved  as shown in Figure~\ref{fig_cn_m_e:b}. 

It should be noted that the Leapfrog scheme is a multi-step method,  it requires two starting values, 
which are usually generated by a one-step method. 
In order to choose a suitable starting one-step scheme, we present computational results using different 
starting values generated by the exact solution and the Crank-Nicolson scheme.
The computed evolution of $L^2$-norm and the energy of  the Leapfrog scheme are shown in 
Figure~\ref{fig_cn_m_e:c} and Figure~\ref{fig_cn_m_e:d}, from which we observe that the  
evolution has a large oscillation at the early stage, and gradually becomes stable.

The evolution of the $L^2$-norm for one trajectory of the  modified BDF schemes 
are show in Figure~\ref{fig_bdf2_m_e:a},\subref{fig_bdf3_m_e:a} and Figure~\ref{fig_bdf4_m_e:a},
\subref{fig_bdf5_m_e:a},\subref{fig_bdf6_m_e:a}.
As expected, all of them gradually become stable and are exactly conserved. 
The differences of these oscillations (the zoom-in graphics at the beginning stage of the evolution) 
are obtained in Figure~\ref{fig_bdfs_m_e:a}. We observe that all modified BDF schemes start with some 
oscillations and the amplitudes of the oscillations quickly diminish with time. 
In addition,  similar phenomenon for the energy are also seen in  
Figure~\ref{fig_bdf2_m_e:b},\subref{fig_bdf3_m_e:b}, Figure~\ref{fig_bdf4_m_e:b},\subref{fig_bdf5_m_e:b},
\subref{fig_bdf6_m_e:b} and Figure~\ref{fig_bdfs_m_e:b}.

%\FloatBarrier
The accuracy of a proposed method is examined numerically by comparing the solution obtained on 
a sequence of coarse (time) meshes with the exact solution given in \eqref{exact_sol}.  
The computed errors 
and rates of the Crank-Nicolson scheme are shown in Table~\ref{table_cn_errors}. We observe 
that the $L^2$-norm errors decrease by a factor $2$ when the step-size $\Dt$ 
is halved. Hence,  a second order convergence rate  is verified.  
%%% C-N
\begin{table}[htb]\small\centering\footnotesize
\caption{Accuracy test of the Crank-Nicolson scheme for NLS equation \eqref{nls_sol}
with the exact solution \eqref{exact_sol}.  $(h=40/N)$ and  $t = 2$. }
\resizebox{\textwidth}{!}{
\begin{tabular}{crcccccccc}
\toprule
\multicolumn{2}{l}{Crank-Nicolson}&\multicolumn{4}{l}{Real Part}&\multicolumn{4}{l}{Imaginary Part}\\
\cmidrule(lr){1-2} \cmidrule(lr){3-6} \cmidrule(lr){7-10}
$N$ &$\Dt$&$L^2$ error&Order&$\Li$ error&Order&$L^2$ error&Order&$\Li$ error&Order \\
\midrule
4000&  1/8&     0.5194&   --&     0.5136&   --&     0.5290&   --&     0.4351&    --\\
4000& 1/16&     0.1337& 1.96&     0.1073& 2.26&     0.1299& 2.03&     0.1057&  2.04\\
4000& 1/32&     0.0319& 2.07&     0.0250& 2.10&     0.0311& 2.06&     0.0249&  2.09\\
%4000& 1/64&  0.0072& 2.14& 0.0057& 2.15& 0.0070& 2.14& 0.0056&	2.14\\
\bottomrule
\end{tabular}}
\label{table_cn_errors}
\end{table}

Recall that the Leapfrog scheme is a multi-step numerical method. 
In order to choose a suitable starting (one-step) scheme,   
we present some convergence results using difference starting values generated by the exact 
solution and by the Crank-Nickson scheme in Table~\ref{table_lf_errors}. 
It is clear to see that the $L^2$-norm error rate is $2$, which confirms our theoretical result.

\begin{table}[htb]\small\centering \footnotesize
\caption{Accuracy test of the Leapfrog scheme for NLS equation
\eqref{nls_sol} with the exact solution \eqref{exact_sol}.  $(h=40/N)$ and  $t = 2$.}
\resizebox{\textwidth}{!}{
\begin{tabular}{lcrcccccccc}
\toprule
\multicolumn{3}{l}{Leapfrog}&\multicolumn{4}{l}{Real Part}&\multicolumn{4}{l}{Imaginary Part} \\
\cmidrule(lr){1-3} \cmidrule(lr){4-7} \cmidrule(lr){8-11}
Initial values&  $N$&$\Dt$&$L^2$ error&Order&$\Li$ error&Order&$L^2$ error&Order&$\Li$ error&Order\\
\midrule
Exact         & 4000&  1/8&     0.5194&   --&     0.5136&   --&     0.5290&   --&     0.4351&   --\\
solutions     & 4000& 1/16&     0.1337& 1.96&     0.1073& 2.26&     0.1299& 2.03&     0.1057& 2.04\\
	          & 4000& 1/32&     0.0319& 2.07&     0.0250& 2.10&     0.0311& 2.06&     0.0249& 2.09\\
%		 & 4000& 1/64&  0.0072& 2.14& 0.0057& 2.15& 0.0070& 2.14& 0.0056& 2.14\\
\\
Numerical     & 4000&  1/8&     0.4606&   --&     0.3583&	--&     0.5208&   --&     0.5005&   --\\
solution      & 4000& 1/16&     0.1163&	1.99&     0.0971& 1.88&  	0.1165&	2.16&     0.0969& 2.37\\
(C-N)         & 4000& 1/32& 	0.1594&	0.74&     0.1228& 0.75& 	0.1596&	0.74&     0.1192& 0.71\\
%		 & 4000& 1/64&  0.0866&	0.88& 0.0667& 0.88&	0.0866&	0.88& 0.0653& 0.87\\
\bottomrule
\end{tabular}}
\label{table_lf_errors}
\end{table}

We run the same tests for the proposed modified BDF schemes, the computed results are shown  
in Table~\ref{table_bdfs_errors}, we again observe that the $L^2$-norm error rate is $2$.

\begin{table}[htb]\small\centering\footnotesize
\caption{Accuracy test of modified BDF2 scheme for NLS equation
\eqref{nls_sol} with the exact solution \eqref{exact_sol}. $(h=40/N)$ and  $t = 2$.}
\resizebox{\textwidth}{!}{
\begin{tabular}{clcrcccccccc}
\toprule
\multicolumn{4}{l}{M--BDFs}&\multicolumn{4}{l}{Real Part}&\multicolumn{4}{l}{Imaginary Part} \\
\cmidrule(lr){2-4} \cmidrule(lr){5-8} \cmidrule(lr){9-12}
&Initial values&  $N$&$\Dt$&$L^2$ error&Order&$\Li$ error&Order&$L^2$ error&Order&$\Li$ error&Order \\
\midrule
%\multirow{7}{*}{M--BDF2}& Exact   & 4000& 1/16& 0.2557&	  --& 0.2318&   --&	0.2627&	  --& 0.2536&   --\\
M--BDF2 & Exact   & 4000& 1/16& 0.2557&	  --& 0.2318&   --&	0.2627&	  --& 0.2536&   --\\
		&solutions&     & 1/32& 0.0624& 2.04& 0.0558& 2.06&	0.0640& 2.04& 0.0618& 2.04\\
		&		  &     & 1/64& 0.0149& 2.07& 0.0133& 2.07& 0.0152& 2.07& 0.0148& 2.07\\
\\
		&Numerical& 4000&  1/4& 0.7278&	  --& 0.5425&   --&	0.7598&	  --& 0.6963&	--\\
		&solution &     &  1/8& 0.3144& 1.21& 0.2751& 0.98&	0.3132& 1.28& 0.2664& 1.39\\
		&(C-N)    &     & 1/16& 0.0535& 2.55& 0.0345& 3.00&	0.0506& 2.63& 0.0397& 2.75\\
		\rowcolor{mygray}&&&&&&&&&&&\\
%\multirow{7}{*}{M--BDF3}&Exact    & 4000& 1/16& 0.3848&   --& 0.3833&   --& 0.3833&   --& 0.3910&   --\\
		\rowcolor{mygray}
M--BDF3 &Exact    & 4000& 1/16& 0.3848&   --& 0.3833&   --& 0.3833&   --& 0.3910&   --\\
		\rowcolor{mygray}
		&solutions&     & 1/32& 0.0799& 2.27& 0.0770& 2.32& 0.0770& 2.32& 0.0799& 2.29\\
		\rowcolor{mygray}
		&		  &     & 1/64& 0.0168& 2.25& 0.0153& 2.33& 0.0153& 2.33& 0.0161& 2.31\\
		\rowcolor{mygray}&&&&&&&&&&&\\
		\rowcolor{mygray}
		&Numerical& 4000&  1/4& 0.7491&	  --& 0.5313&   --& 0.7680&	  --& 0.6889&   --\\
		\rowcolor{mygray}
		&solution &     &  1/8& 0.3456& 1.12& 0.2821& 0.91& 0.3470& 1.15& 0.2892& 1.25\\
		\rowcolor{mygray}
		&(C-N)    &     & 1/16& 0.0555& 2.64& 0.0373& 2.92& 0.0505& 2.78& 0.0407& 2.83\\
\\
%\multirow{7}{*}{M--BDF4}&Exact    & 4000& 1/16& 0.5192&   --& 0.5376&   --& 0.5443&   --& 0.4896&   --\\
M--BDF4 &Exact    & 4000& 1/16& 0.5192&   --& 0.5376&   --& 0.5443&   --& 0.4896&   --\\
		&solutions&     & 1/32& 0.1073& 2.27& 0.1043& 2.37& 0.1204& 2.18& 0.0999& 2.29\\
		&		  &     & 1/64& 0.0267& 2.01& 0.0234& 2.16& 0.0310& 1.96& 0.0251& 1.99\\
		\\
		&Numerical& 4000&  1/4& 0.7736&	  --& 0.5937&   --& 0.7849&   --& 0.6807&   --\\
		&solution &     &  1/8& 0.3651& 1.08& 0.3245& 0.87& 0.3637& 1.12& 0.2969& 1.20\\
		&(C-N)    &     & 1/16& 0.0584& 2.64& 0.0398& 3.03& 0.0534& 2.77& 0.0401& 2.89\\
		\rowcolor{mygray}&&&&&&&&&&&\\
		%\multirow{7}{*}{M--BDF5}&
		\rowcolor{mygray}
M--BDF5 &Exact    & 4000& 1/16& 0.6498& 1.47& 0.6805& 1.54& 0.6567& 1.57& 0.5517& 1.89\\
		\rowcolor{mygray}
		&solutions&     & 1/32& 0.1357& 2.26& 0.1321& 2.37& 0.1511& 2.12& 0.1199& 2.20\\
		\rowcolor{mygray}
		&         &     & 1/64& 0.0361& 1.91& 0.0313& 2.08& 0.0413& 1.87& 0.0336& 1.83\\
		\rowcolor{mygray}&&&&&&&&&&&\\
		\rowcolor{mygray}
		&Numerical& 4000&  1/4& 0.7491&   --& 0.5313&   --& 0.7680&	  --& 0.6889&   --\\
		\rowcolor{mygray}
		&solution &     &  1/8& 0.3456& 1.12& 0.2821& 0.91& 0.3470& 1.15& 0.2891& 1.25\\
		\rowcolor{mygray}
		&(C-N)    &     & 1/16& 0.0555& 2.64& 0.0373& 2.92& 0.0505& 2.78& 0.0407& 2.83\\
\\
%\multirow{7}{*}{M--BDF5}&
M--BDF6 &Exact    & 4000& 1/16& 0.7683&   --& 0.8078&   --& 0.7492&   --& 0.5869&   --\\
		&solutions&     & 1/32& 0.1630& 2.24& 0.1589& 2.35& 0.1794& 2.06& 0.1389& 2.08\\
		&		  &     & 1/64& 0.0444& 1.88& 0.0384& 2.05& 0.0505& 1.83& 0.0409& 1.76\\
		\\
		&Numerical& 4000&  1/4& 0.8018&   --& 0.6350&   --& 0.8089&	  --& 0.6751&   --\\
		&solution &     &  1/8& 0.3808& 1.07& 0.3515& 0.85& 0.3748& 1.11& 0.3017& 1.16\\
		&(C-N)    &     & 1/16& 0.0579& 2.72& 0.0404& 3.12& 0.0529& 2.83& 0.0399& 2.92\\
\bottomrule
\end{tabular}}
\label{table_bdfs_errors}
\end{table}
 
We conclude this section by presenting a convergence and performance comparison of the 
Leapfrog scheme and the modified BDF schemes in Figure~\ref{fig_bdfs_rates}.
 
\begin{figure}[htb]
\centering
\subfigure[]{
\label{fig_bdfs_rates:a} 
\includegraphics[height=1.8in,width=2.4in]{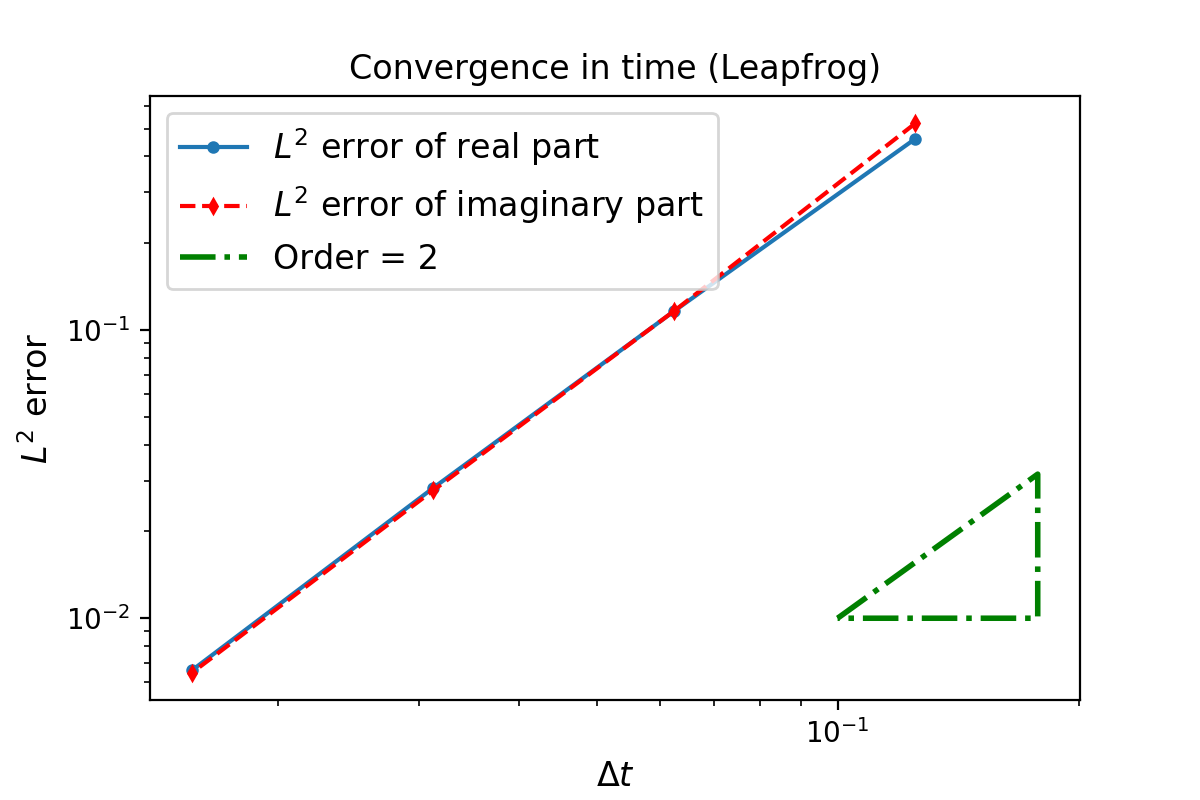}}
\subfigure[]{
\label{fig_bdfs_rates:b} 
\includegraphics[height=1.8in,width=2.4in]{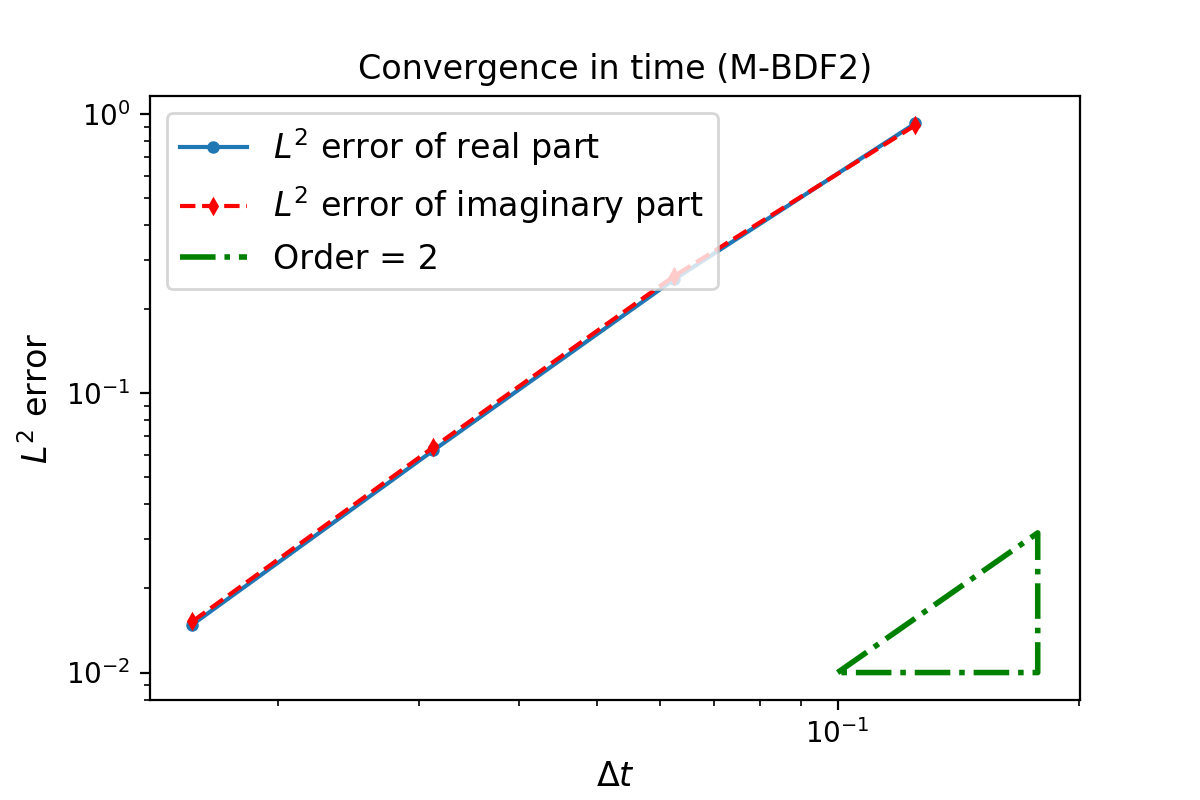}}
\subfigure[]{
\label{fig_bdfs_rates:c} 
\includegraphics[height=1.8in,width=2.4in]{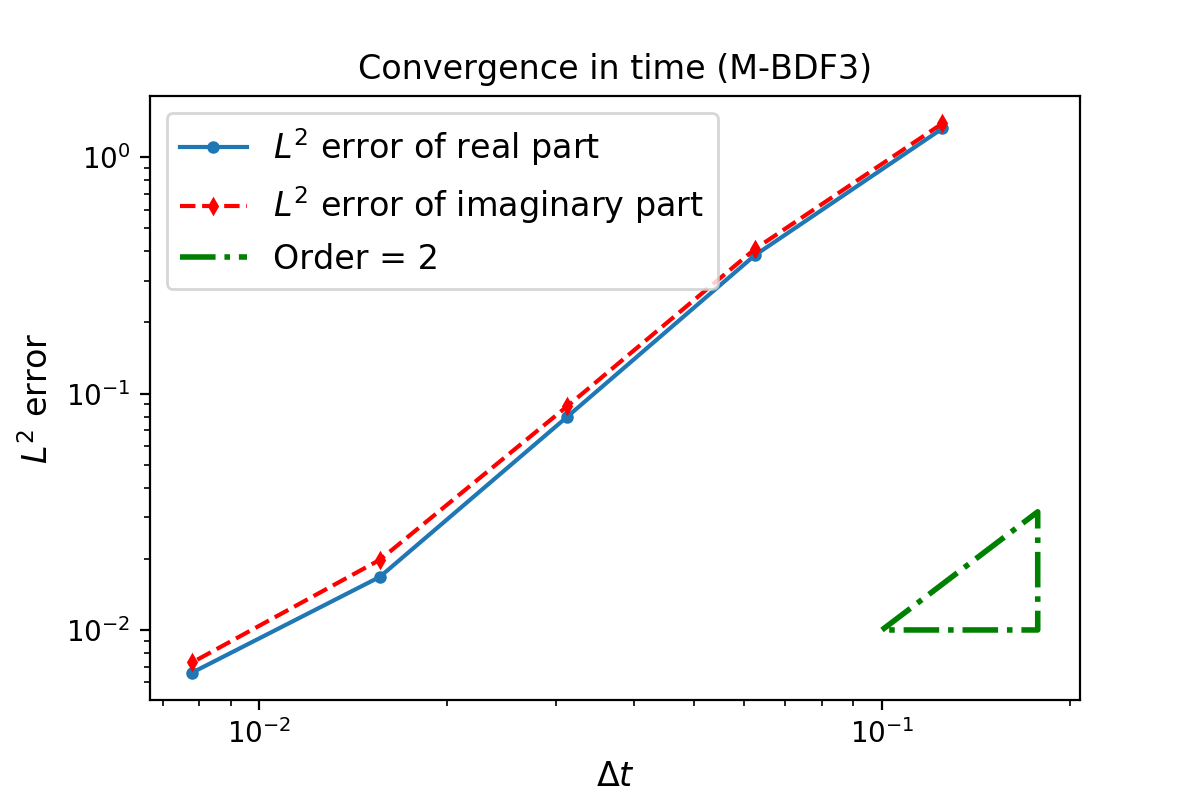}}
\subfigure[]{
\label{fig_bdfs_rates:d} 
\includegraphics[height=1.8in,width=2.4in]{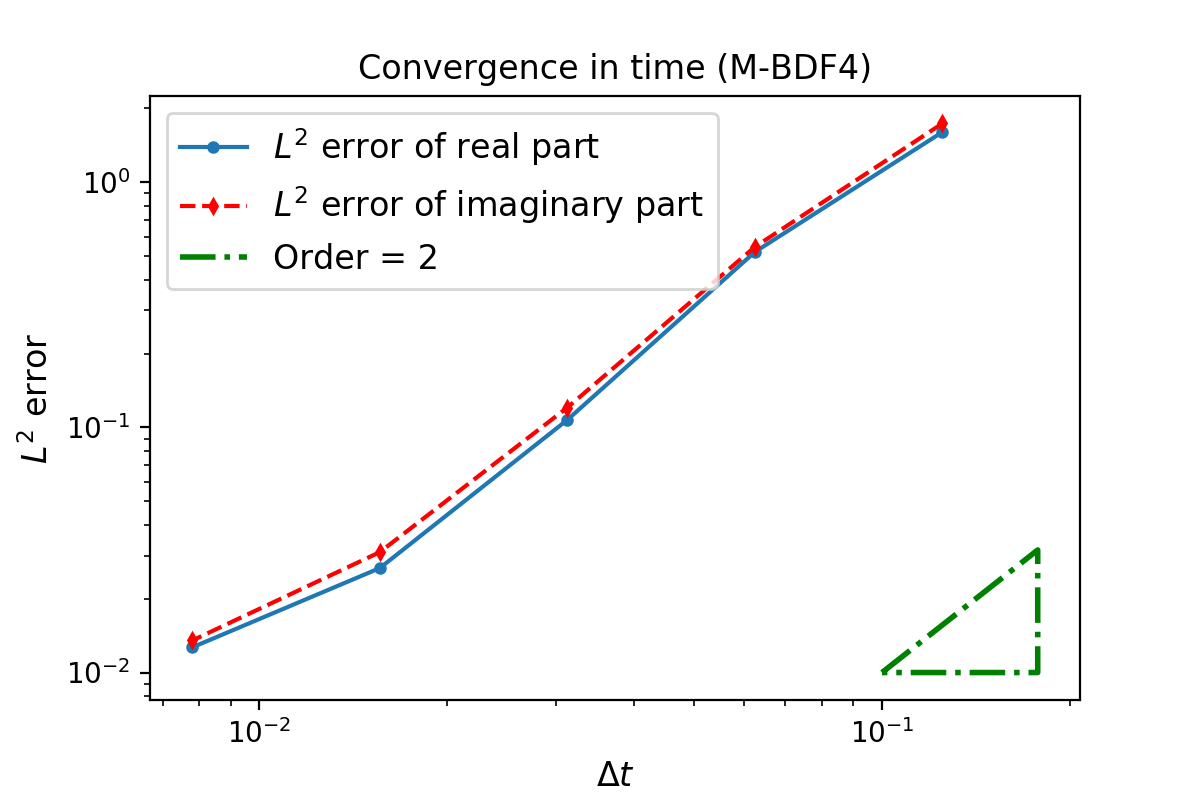}}
\subfigure[]{
\label{fig_bdfs_rates:e} 
\includegraphics[height=1.8in,width=2.4in]{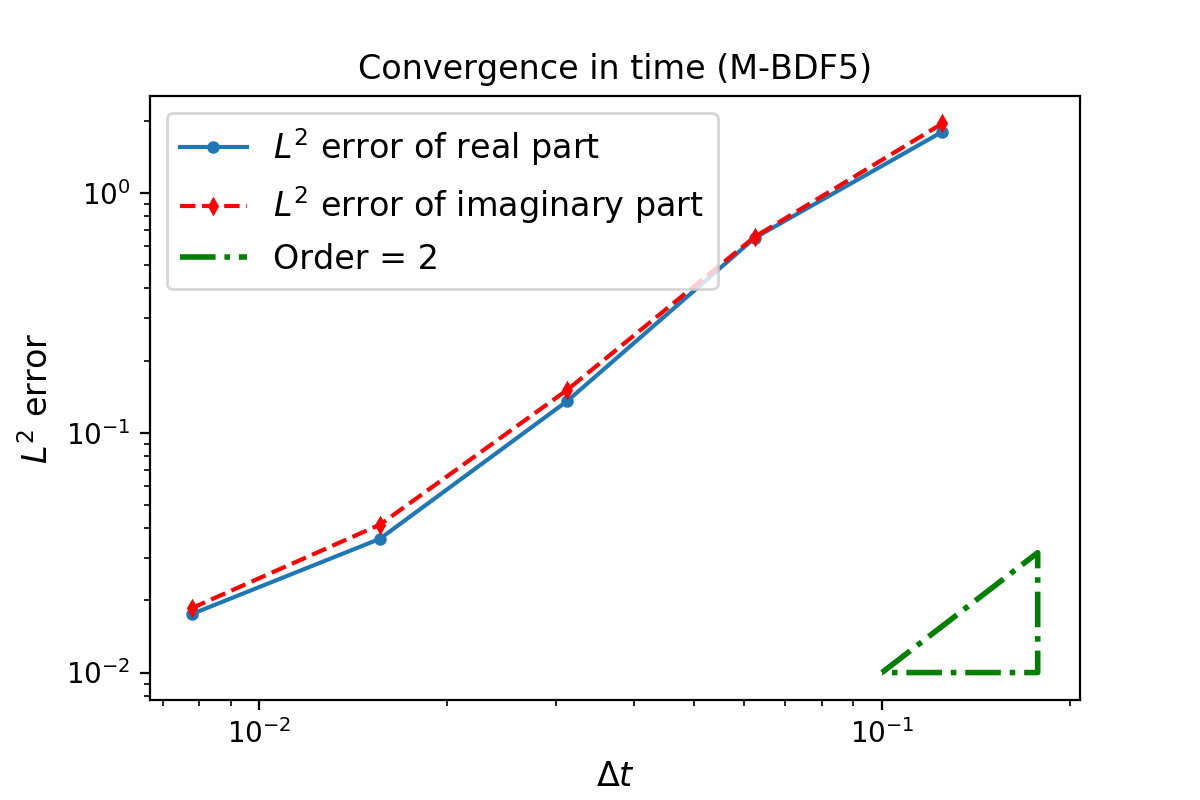}}
\subfigure[]{
\label{fig_bdfs_rates:f} 
\includegraphics[height=1.8in,width=2.4in]{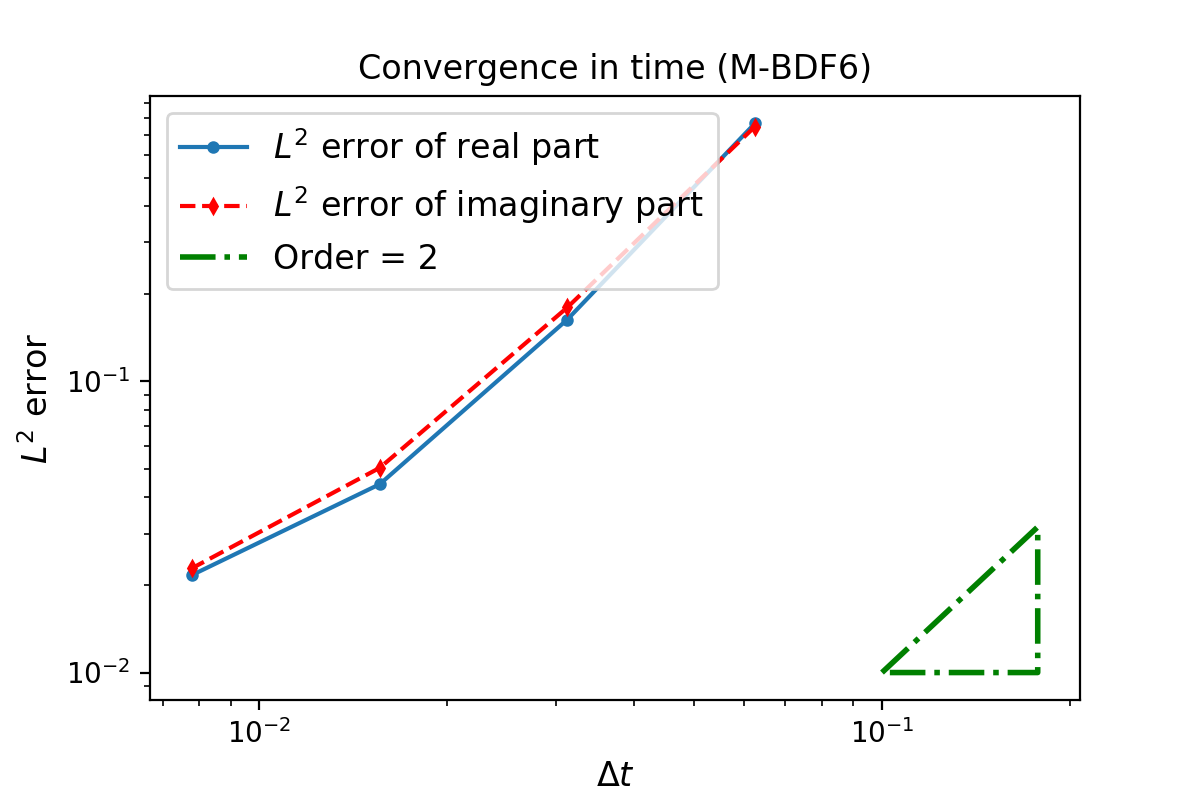}}
\caption{Rates of convergence in the norm $\|u(T)-u^{[T/\Dt]}\|_{L^2}$. $T=2$, 
$N = 4000$, $h=40/N$, $\Dt=2^{-i} (i=3,4,5,6)$.}
\label{fig_bdfs_rates}
\end{figure}

%\FloatBarrier
\section{Numerical experiments: capturing the blow-up time.}\label{sec-8}
Our aim in this section is to present a numerical study of the blow-up phenomenon for the 
quintic nonlinear Schr\"odinger equation, which is known to be very delicate to simulate  
in order to have an accurate understanding of this behavior. In particular, we focus 
on predicating and capturing the blow-up time using several proposed schemes. 
Existing numerical results have shown that starting with an initial condition of a given amplitude, 
one can claim that the solution has a singularity as soon as its amplitude becomes three (or more) 
times bigger than the initial amplitude \cite{Debussche2002Numerical}. 
%Therefore, it is sufficient for our purpose to consider a uniform grid with mesh size sufficiently
%small to tolerate these moderately large amplitudes.

We consider the following quintic nonlinear Schr\"odinger problem:
\begin{alignat}{2}\label{nls_sol_2}
i u_t +\Delta u +|u|^4 u &=0, &&\qquad t>0,\quad  -10 \le x \le 10,\\
u (0) = u &_0, && 
\end{alignat}
with periodic boundary conditions \cite{Lu2015Mass}. The initial condition is chosen as 
$u_0 = 1.6 e^{-x^2}$. Since the initial energy is negative,  it is known that the
a blow-up in the solution must occur in finite time \cite{Debussche2002Numerical}.

\subsection{Comparison of $\Li$-norm profiles of the computed solutions by different schemes.}
In this subsection, we want to test whether all or which of our proposed time-stepping 
schemes will be able to capture the blow-up phenomenon.

Figure~\ref{fig_max} shows the simulation results of various schemes. 
We clearly see the formation of a singularity and that the used mesh size is small enough to
capture the essential feature of the blow-up by all but the Leapfrog scheme in 
Figure~\ref{fig_max:a}, As a result, we conclude that the Leapfrog scheme is not, 
but all other proposed schemes are, capable of capturing the blow-up phenomenon of the quintic 
nonlinear Schr\"odinger equation \eqref{nls_sol_2}. 

\begin{figure}[htb]
\centering
\subfigure[]{
\label{fig_max:a} 
\includegraphics[height=1.75in,width=2.4in]{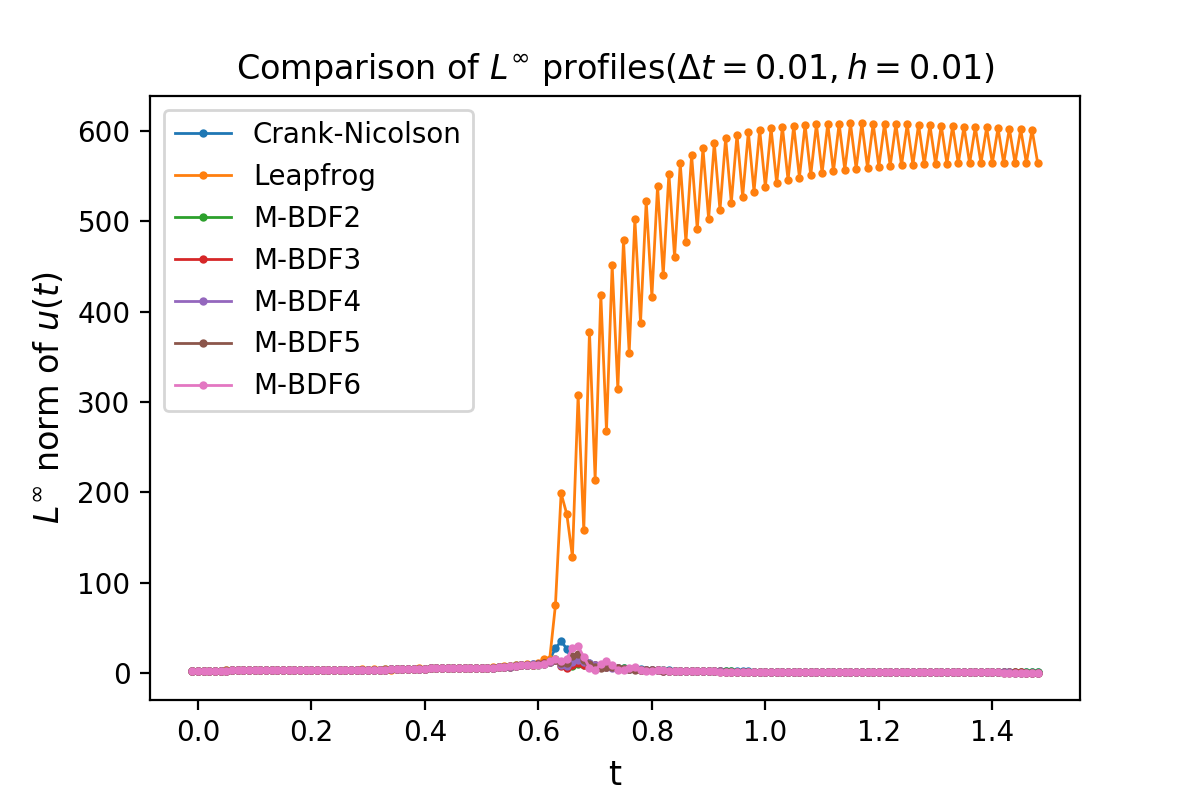}}
\subfigure[]{
\label{fig_max:b} 
\includegraphics[height=1.75in,width=2.4in]{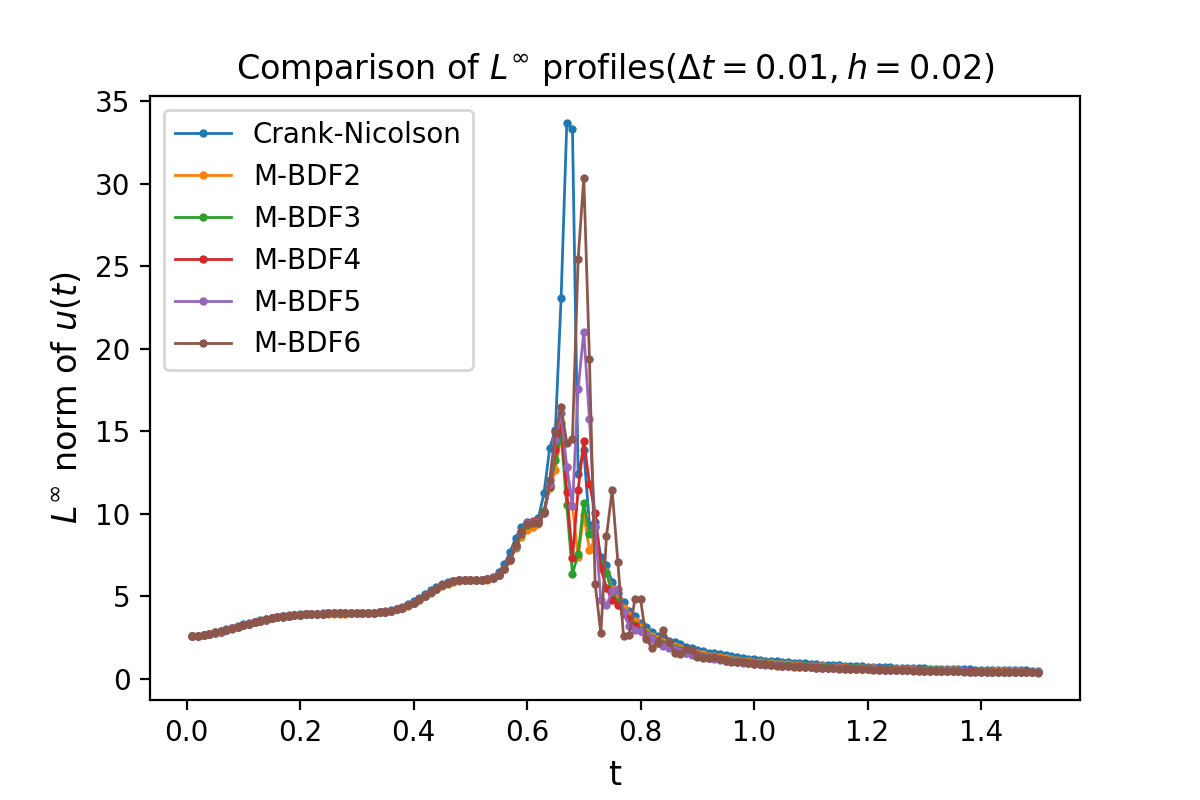}}
\subfigure[]{
\label{fig_max:c} 
\includegraphics[height=1.75in,width=2.4in]{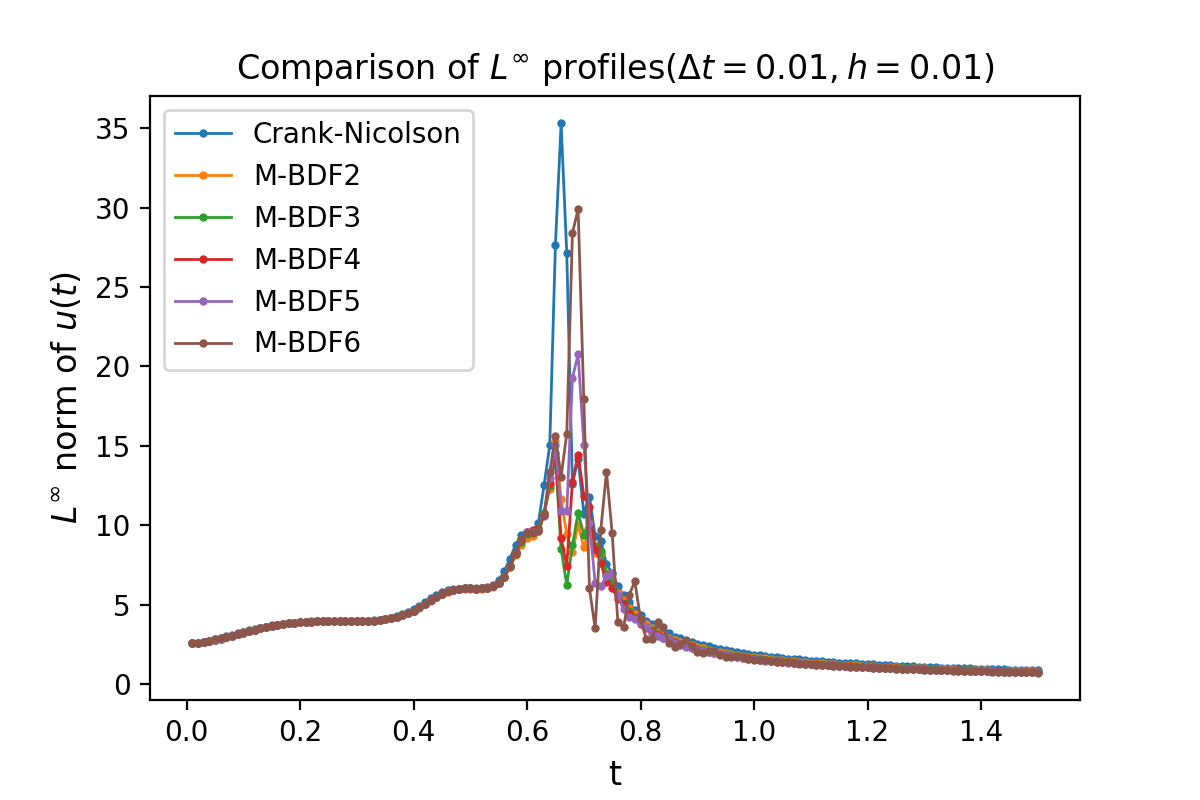}}
\subfigure[]{
\label{fig_max:d} 
\includegraphics[height=1.75in,width=2.4in]{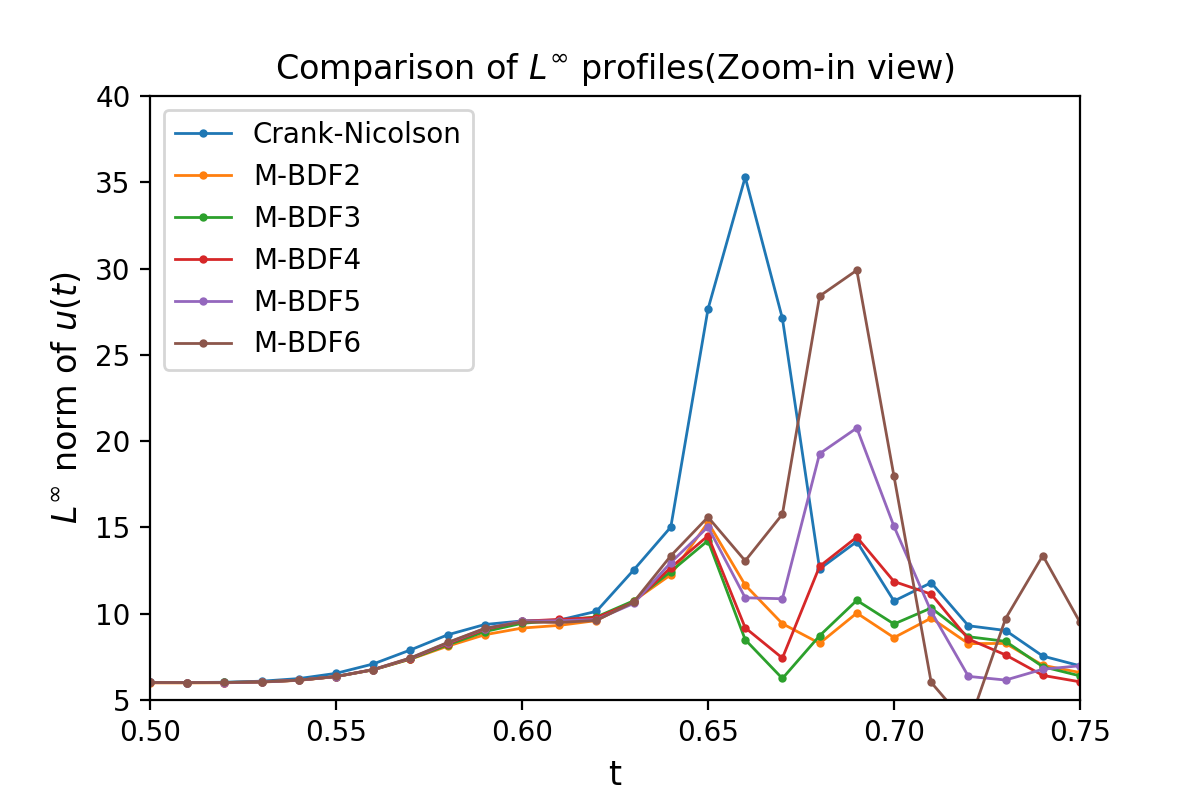}}
\caption{The comparison of $\Li$-norm profiles obtained by different schemes: 
\subref{fig_max:a} (including the Leapfrog scheme);  \subref{fig_max:b}--\subref{fig_max:d}
(excluding the Leapfrog scheme \subref{fig_max:b}--\subref{fig_max:d}).}
\label{fig_max}
\end{figure}

The further comparison of $\Li$-norm profiles obtained by other schemes (excluding the Leapfrog scheme) 
are shown in Figure~\ref{fig_max:b}--\ref{fig_max:d}. The tests in Figure~\ref{fig_max} 
indicate that the Crank-Nickson scheme and the modified BDF schemes are capable of 
capturing the blow-up phenomenon. 

In order to study whether the blow-up phenomenon will affect the mass- and 
energy-conservation results of Theorem~\ref{Co:conservation}, we present the time evolution 
of the mass and energy of $R^n$ in Figure~\ref{fig_m_e:a}--\ref{fig_m_e:b}. 
As expected, the mass of $R^n$ is exactly conserved and the energy 
is also conserved before and after the blow-up time, in spite of a sharply 
increase in energy at the blow-up time. The similar behaviors of $u^n$ are observed 
from Figure~\ref{fig_m_e:c}--\ref{fig_m_e:d}.

\begin{figure}[htb]
\centering
\subfigure[]{
\label{fig_m_e:a} 
\includegraphics[height=1.75in,width=2.4in]{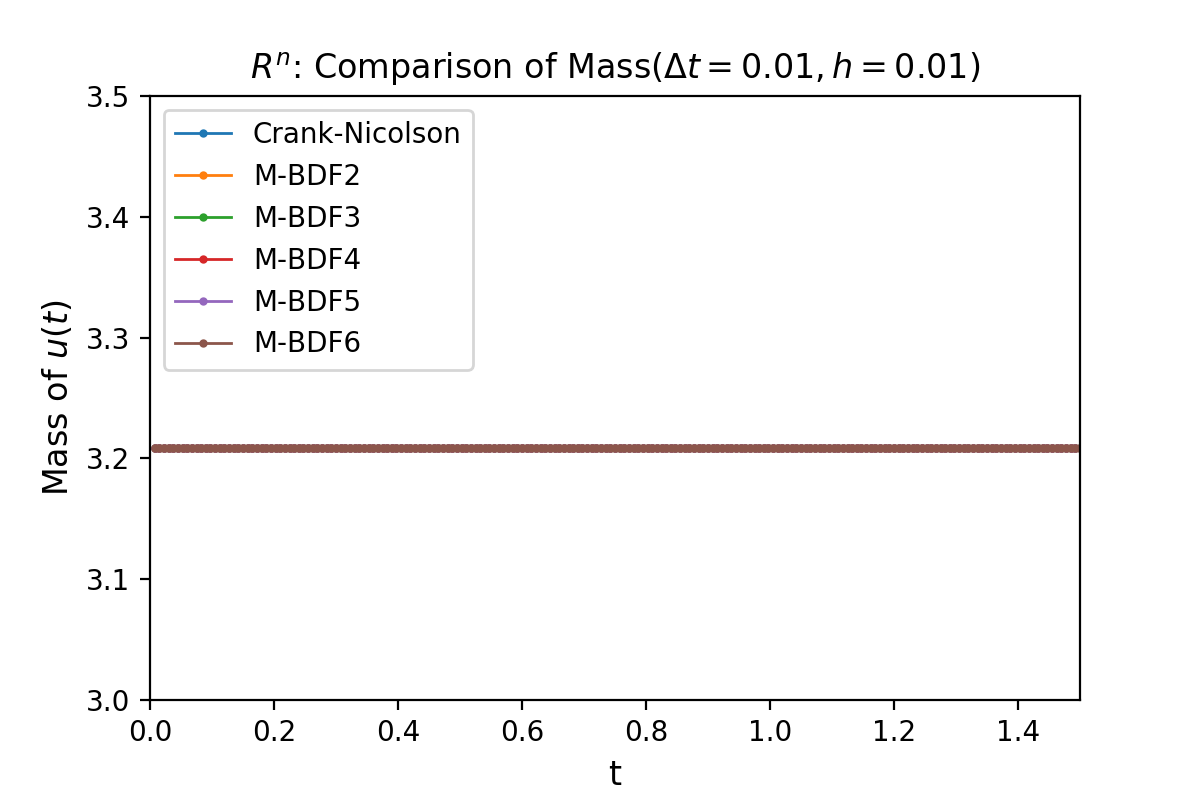}}
\subfigure[]{
\label{fig_m_e:b} 
\includegraphics[height=1.75in,width=2.4in]{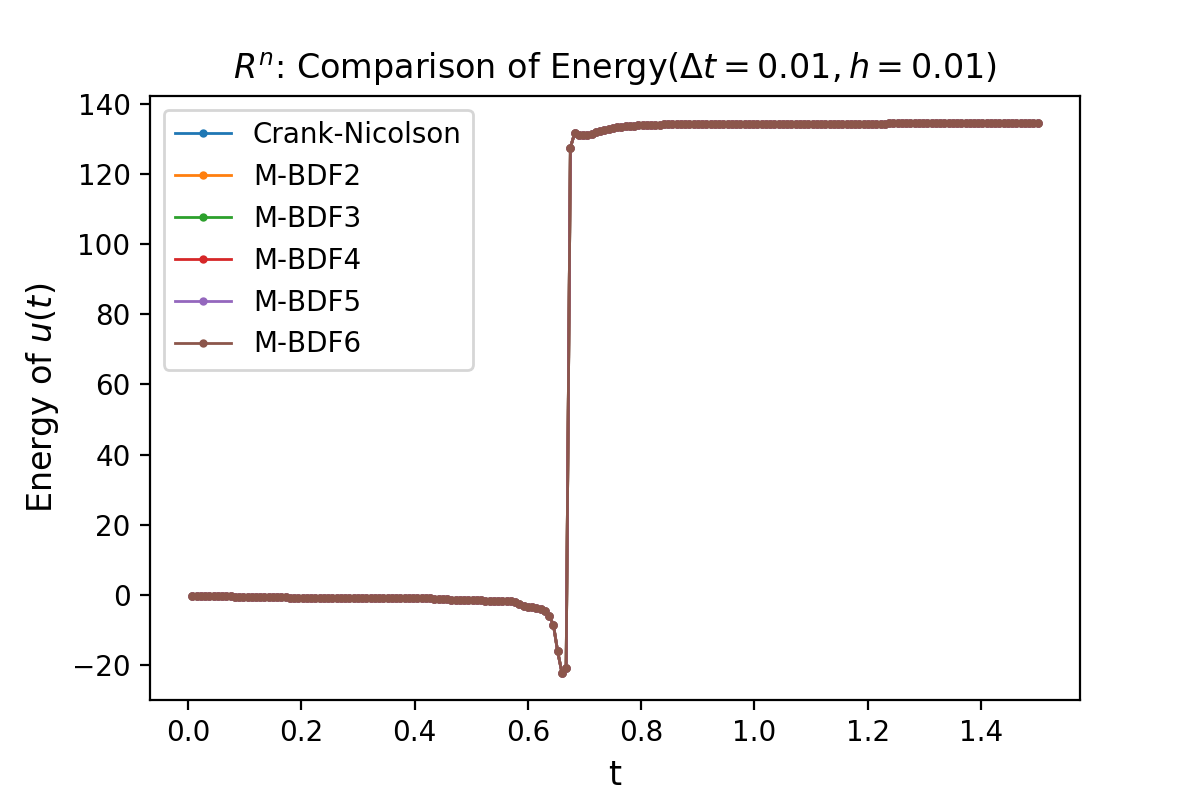}}
\subfigure[]{
\label{fig_m_e:c} 
\includegraphics[height=1.75in,width=2.4in]{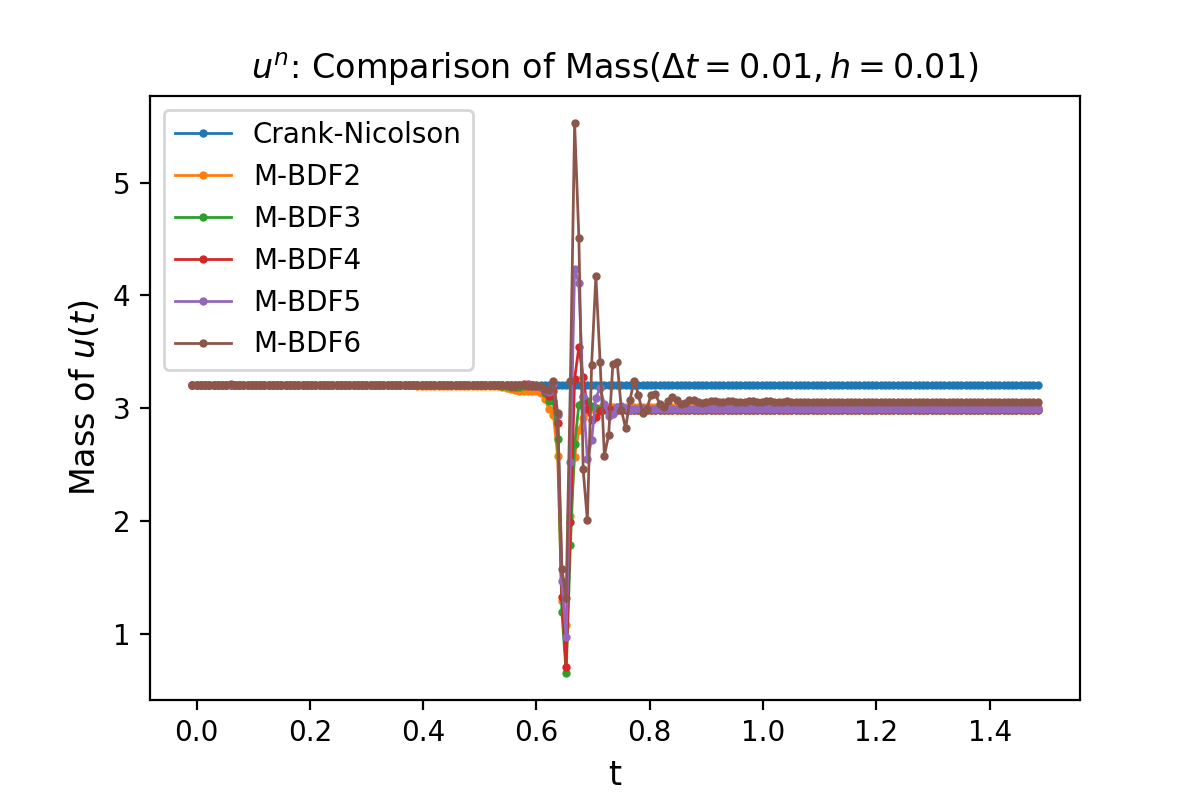}}
\subfigure[]{
\label{fig_m_e:d} 
\includegraphics[height=1.75in,width=2.4in]{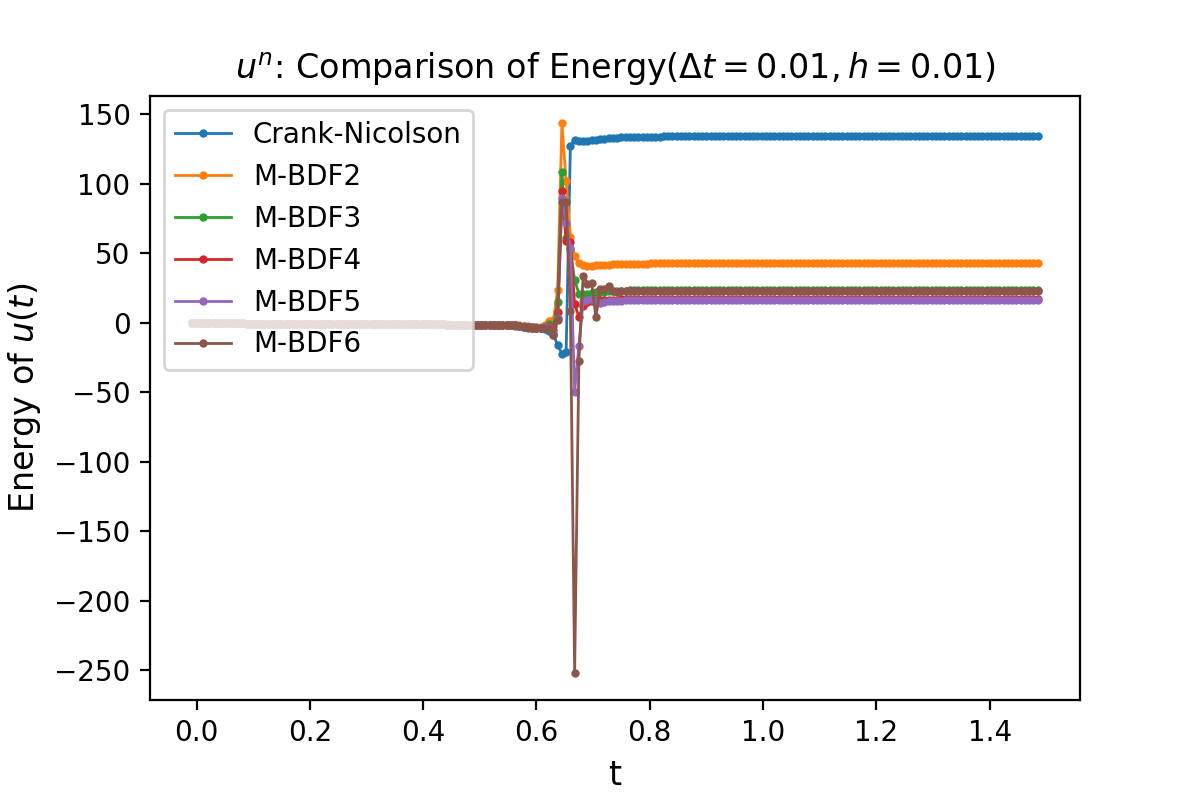}}
\caption{The time evolution of the computed mass and energy of $R^n$ and $u^n$ obtained by different 
schemes (excluding the Leapfog scheme).}
\label{fig_m_e}
\end{figure}

%\FloatBarrier
\subsection{Comparison of $\Li$--norm profiles obtained using different time-step sizes.}
To better understand the sensitivity of the blow-up simulations to the time-step size,
we analyze the capability of different schemes for capturing the blow-up phenomenon 
and provide three criteria for the blow-up time in this subsection.

The comparison of $\Li$--norm profiles obtained by different time-step sizes 
for the proposed time-stepping schemes are shown in Figure~\ref{fig_schemes_max}.
We observe similar behavior  for most schemes. 
Different simulation results for the Leapfrog scheme are shown in Figure~\ref{fig_schemes_max:b} 
and for the linearized scheme of \cite{Wang201New} in Figure~\ref{fig_schemes_max:h}, 
although small enough time-step sizes are used.
As already mentioned earlier, our numerical tests show that the 
Crank-Nickson scheme (see Figure~\ref{fig_schemes_max:a}) and the modified BDF schemes 
(see Figure~\ref{fig_schemes_max:c}--\subref{fig_schemes_max:g}) are capable of capturing the 
blow-up phenomenon. These results also consist with our previous results. 

\begin{figure}[htb]
\centering
\subfigure[]{
\label{fig_schemes_max:a} 
\includegraphics[height=1.75in,width=2.4in]{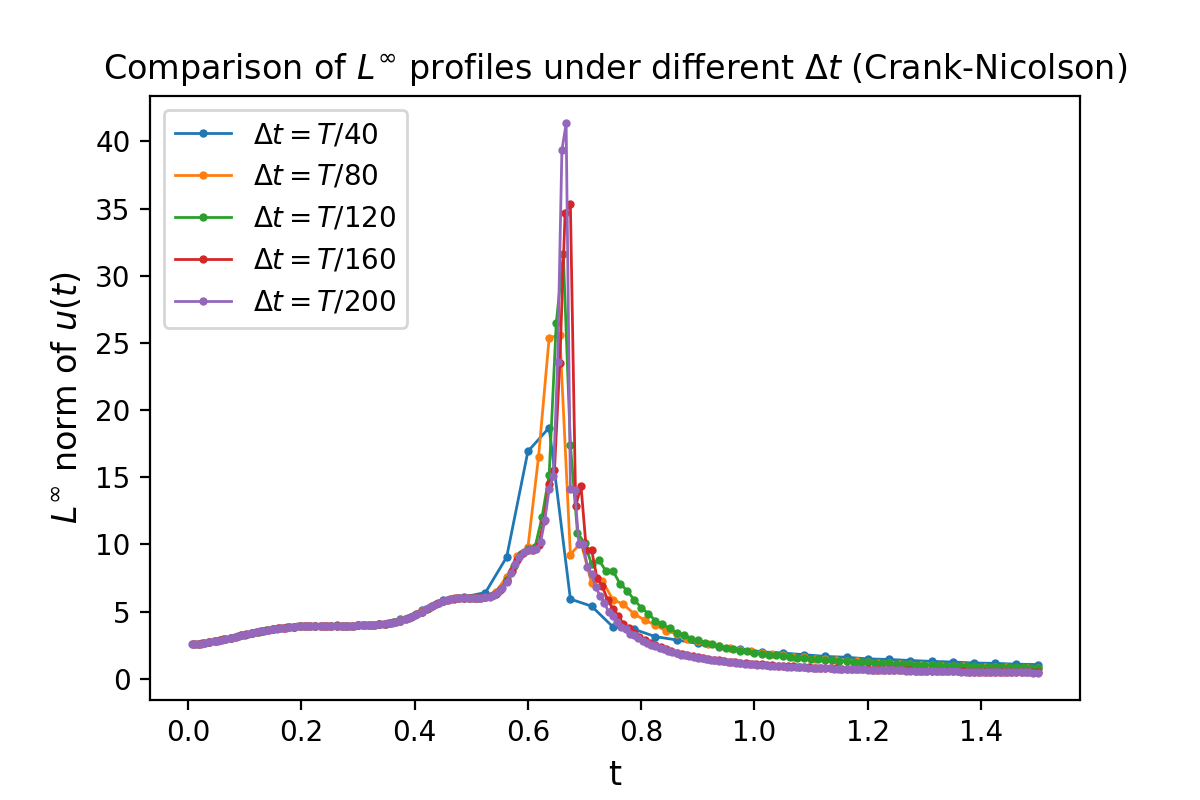}}
\subfigure[]{
\label{fig_schemes_max:b} 
\includegraphics[height=1.75in,width=2.4in]{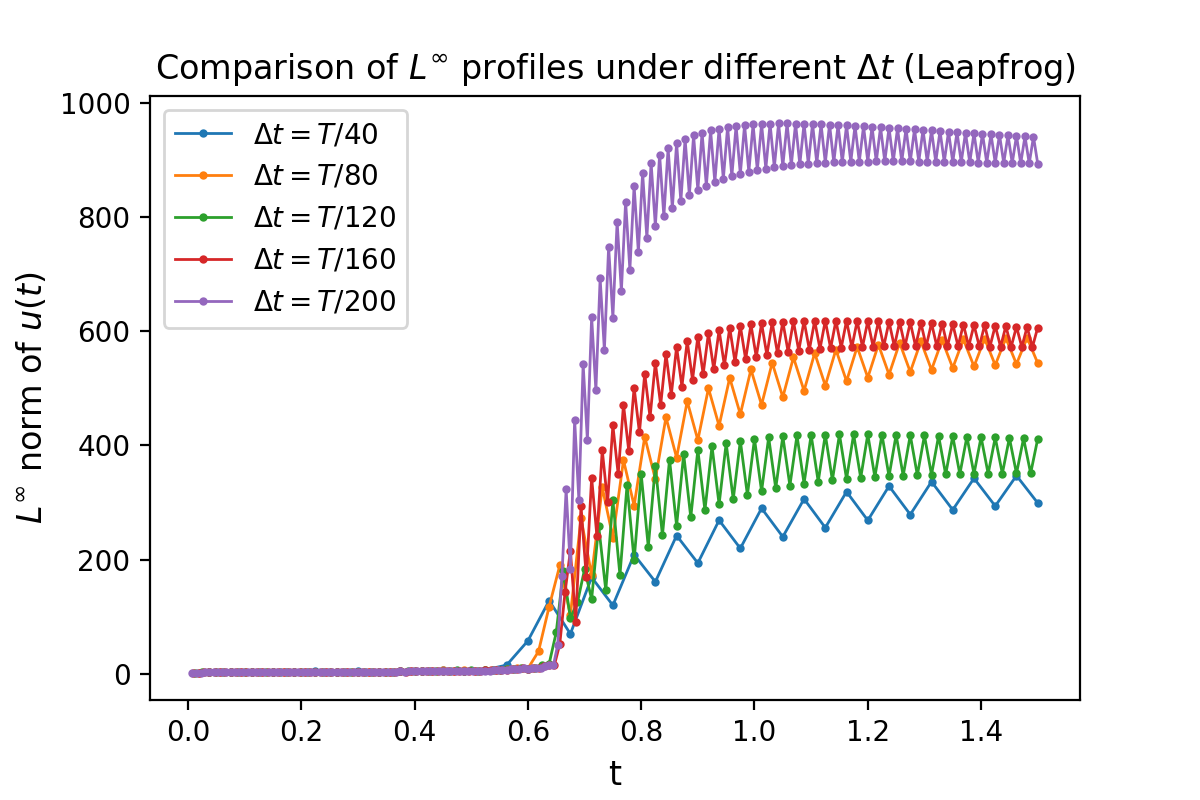}}
\subfigure[]{
\label{fig_schemes_max:c} 
\includegraphics[height=1.75in,width=2.4in]{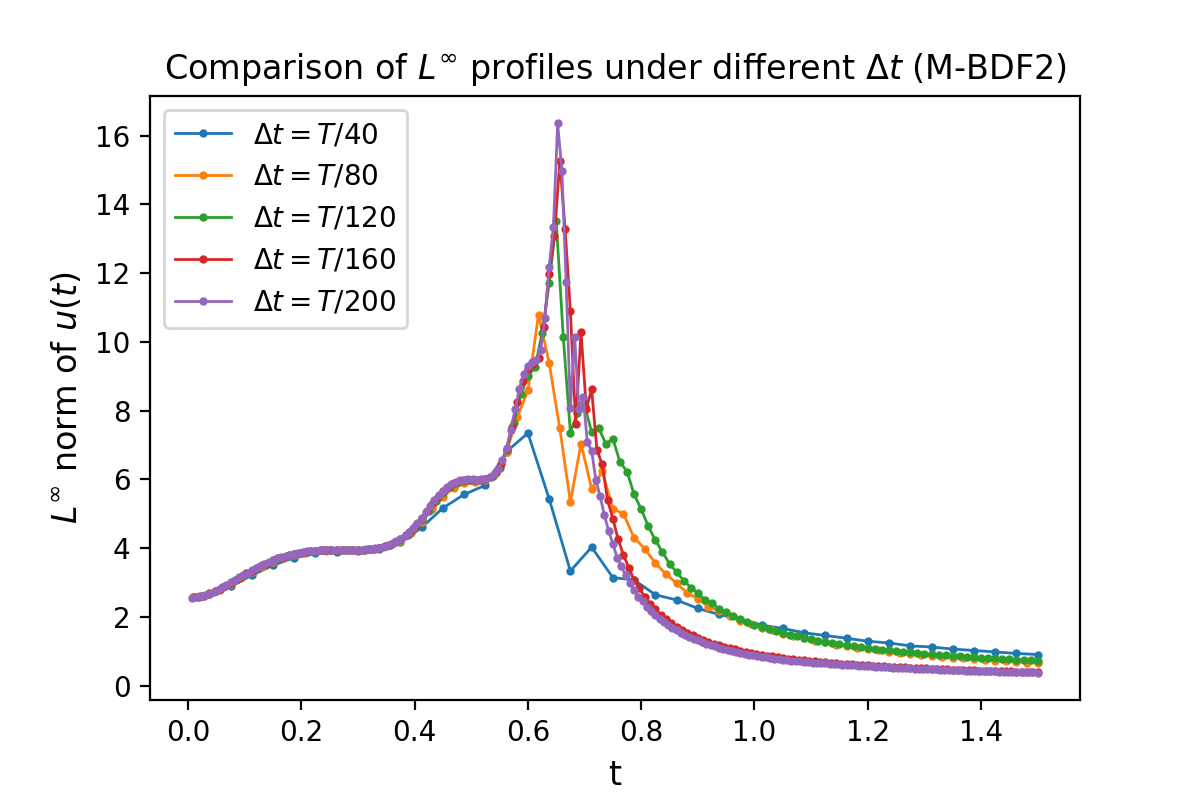}}
\subfigure[]{
\label{fig_schemes_max:d} 
\includegraphics[height=1.75in,width=2.4in]{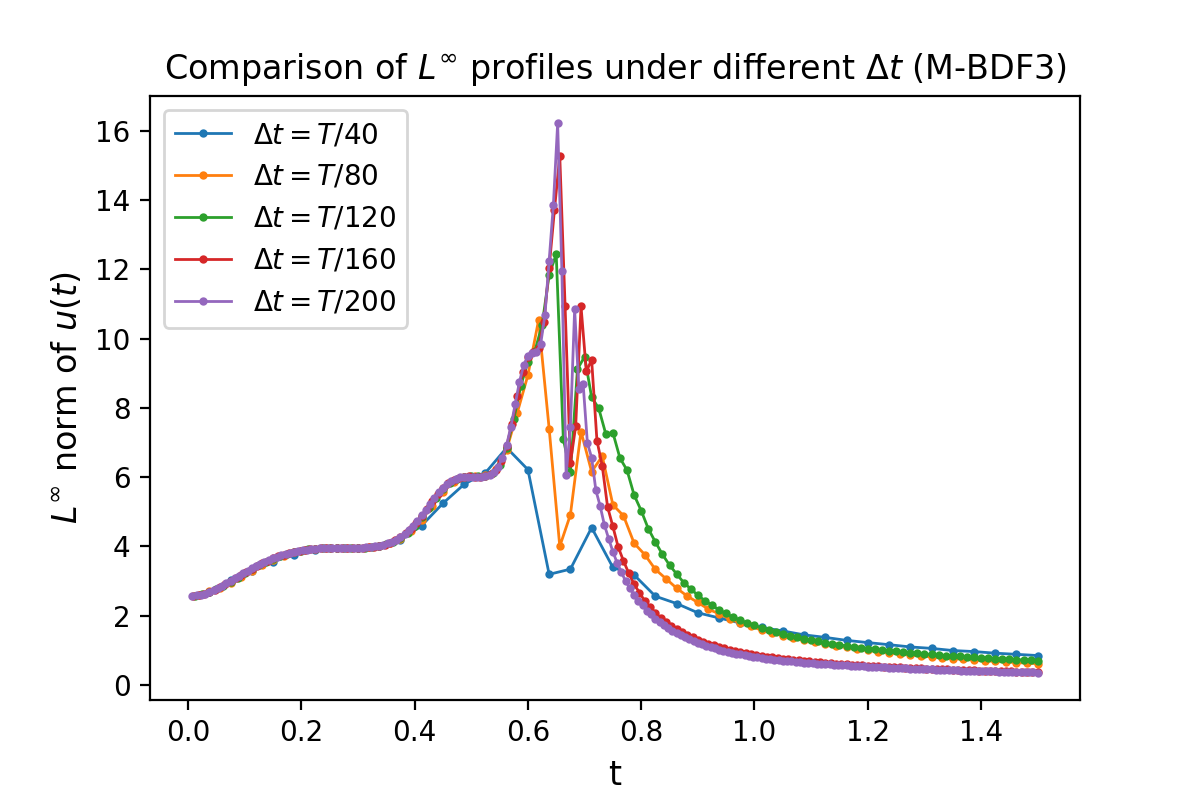}}
\subfigure[]{
\label{fig_schemes_max:e} 
\includegraphics[height=1.75in,width=2.4in]{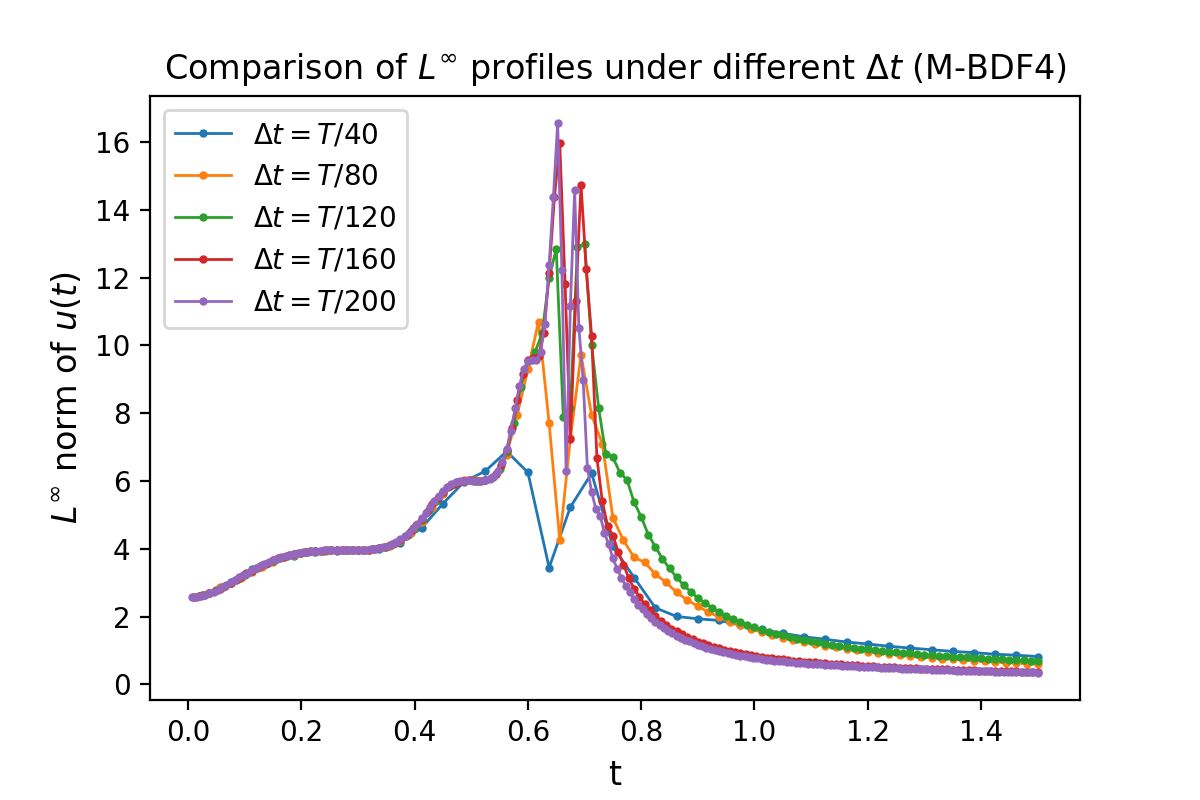}}
\subfigure[]{
\label{fig_schemes_max:f} 
\includegraphics[height=1.75in,width=2.4in]{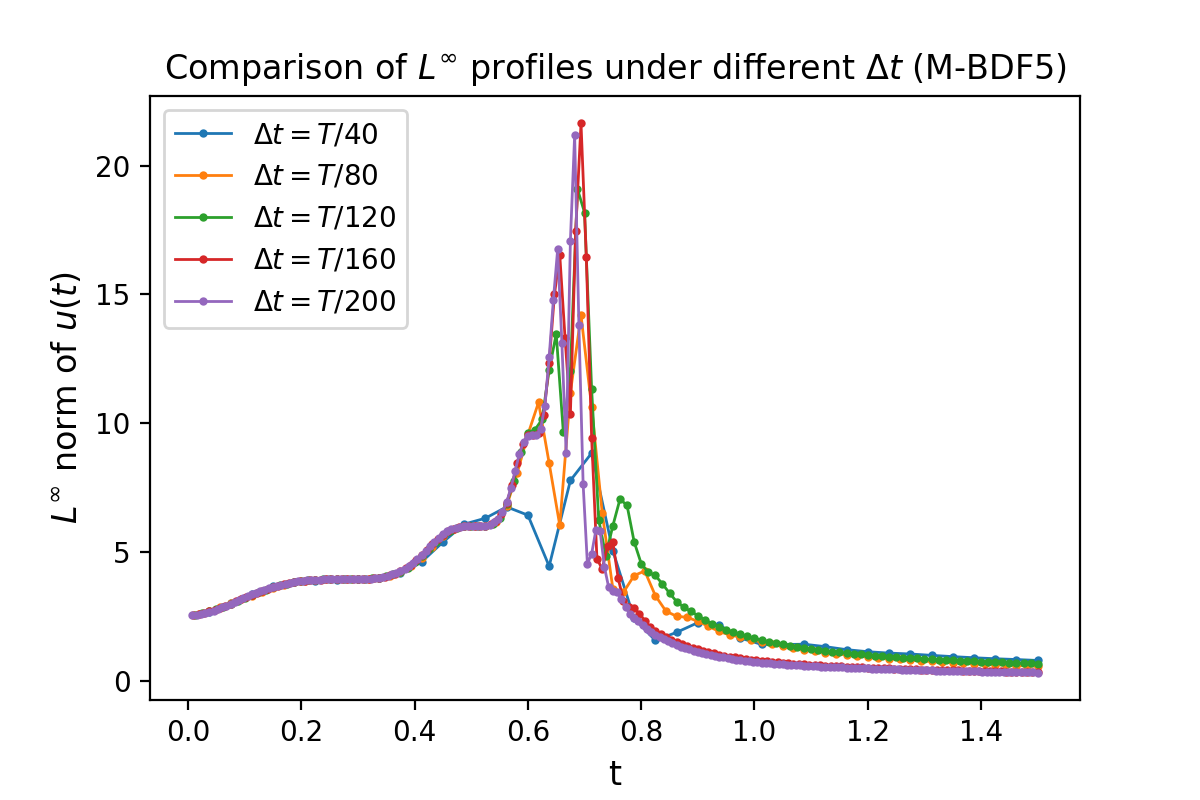}}
\subfigure[]{
\label{fig_schemes_max:g} 
\includegraphics[height=1.75in,width=2.4in]{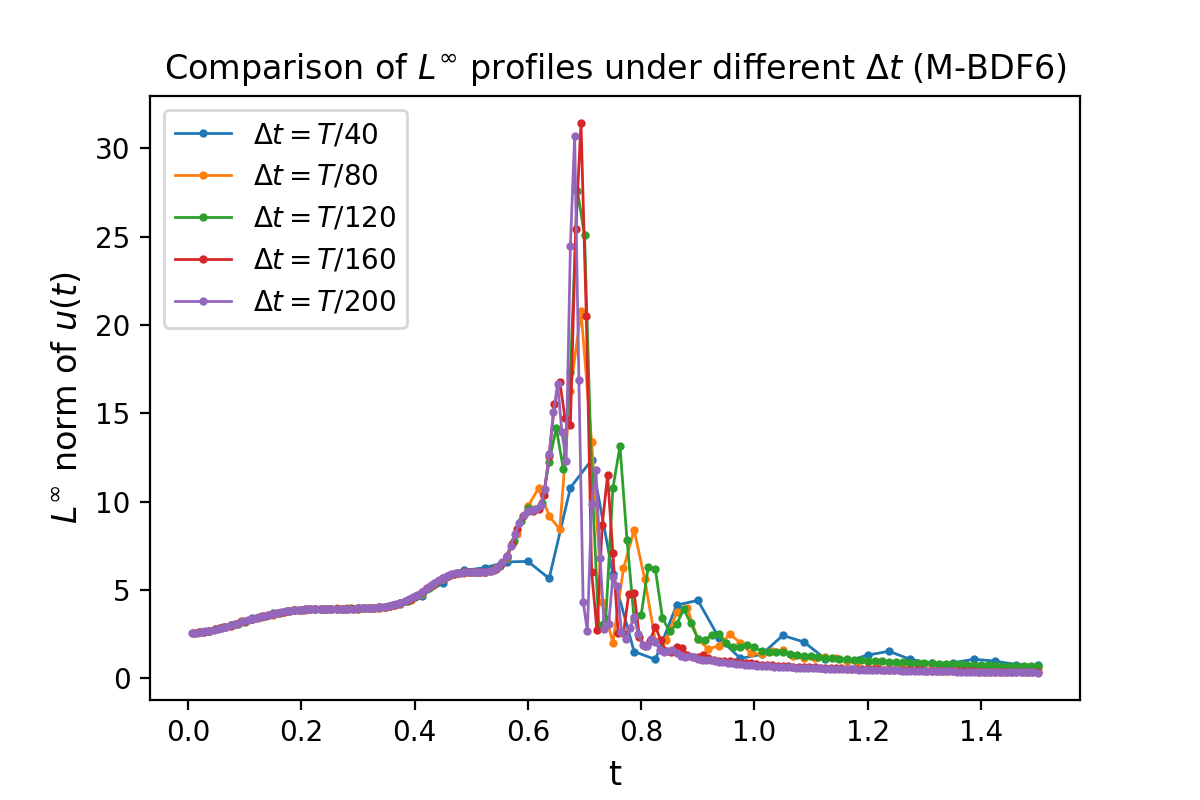}}
\subfigure[]{
\label{fig_schemes_max:h} 
\includegraphics[height=1.75in,width=2.4in]{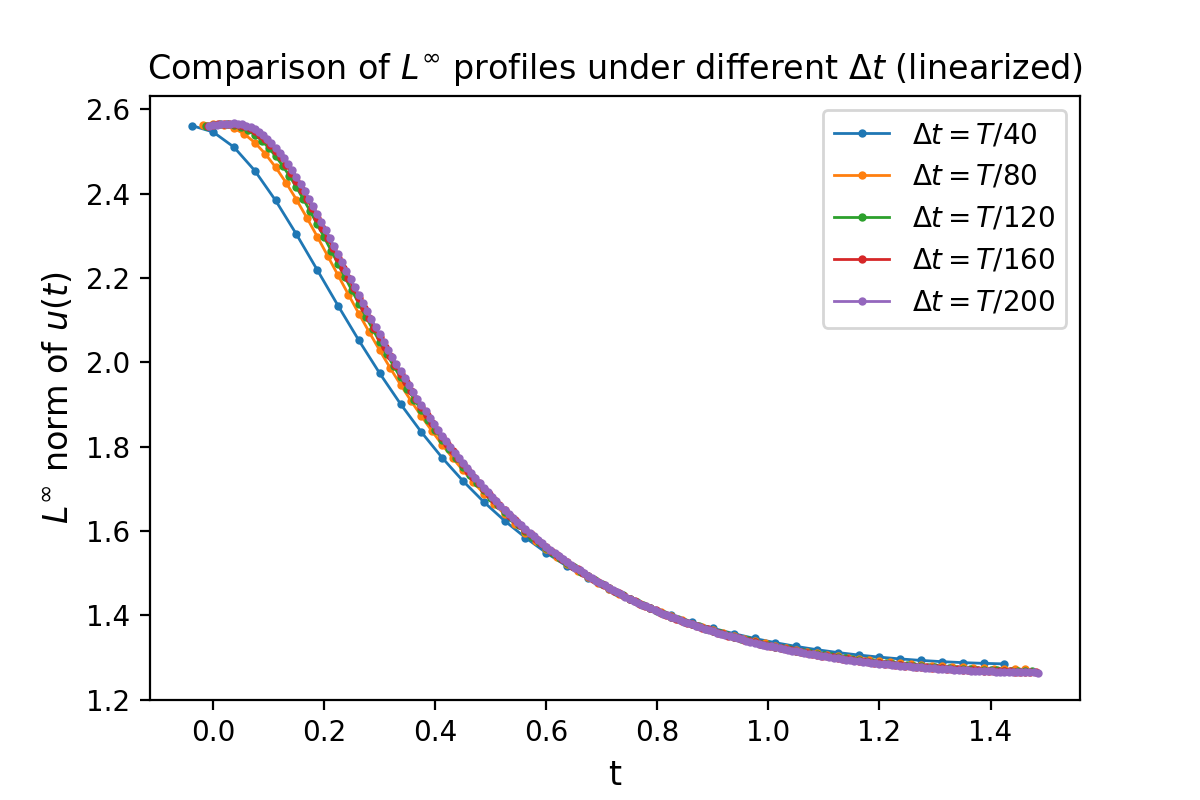}}
\caption{The comparison of $\Li$-norm profiles obtained by different $\Dt$, $h = 0.01$, 
where we also include the linearized scheme of \cite{Wang201New}.}
\label{fig_schemes_max}
\end{figure}

In order to provide some criterion for identifying the blow-up time, we first propose three different 
criteria and then to present a comparison of them on different schemes in Table~\ref{table_blowup}. 
The first criterion is to identify $t^{\max}$ corresponding to the time at which $\|u^n\|_{\Li}$ takes 
its maximum as shown in Figure~\ref{fig_blowup_point:a}, where $u^{\max}$ represents the maximum value.  
One difficulty with this criterion is that we may get different $t^{\max}$ with different schemes. 
To be specific, the modified lower order BDF schemes (i.e., M--BDF2, M--BDF3 and M--BDF4) identify 
the same earliest $t^{\max}$, while the modified higher order BDF schemes (i.e., M--BDF5 and M--BDF6) 
capture the same latest $t^{\max}$. In addition, the $t^{\max}$ found by the Crank-Nickson scheme 
is in the middle of the above two values, so it is inconclusive that which $t^{\max}$ is the most accurate.

However, it should be noted that the Crank-Nickson scheme is a preferable scheme for capturing the blow-up 
phenomenon because it finds the largest $\|u^{\max}\|_{\Li}$ as shown in Table~\ref{table_blowup}.

The other two criteria identify $t_1^n$ and $t_2^n$
in Figure~\ref{fig_blowup_point:b}, where $t_1^n$ represents the time  
point at which the energy of $R^n$ is the smallest and 
$t_2^n$ denotes the time point at which the energy of $R^n$ has the maximum increase. 
From Table~\ref{table_blowup} we observe that all time-stepping schemes identify the same blow-up time using 
these two criteria, which shows the robustness of both criteria.

\begin{figure}[htb]
\centering
\subfigure[]{
\label{fig_blowup_point:a}
\includegraphics[height=1.75in,width=2.4in]{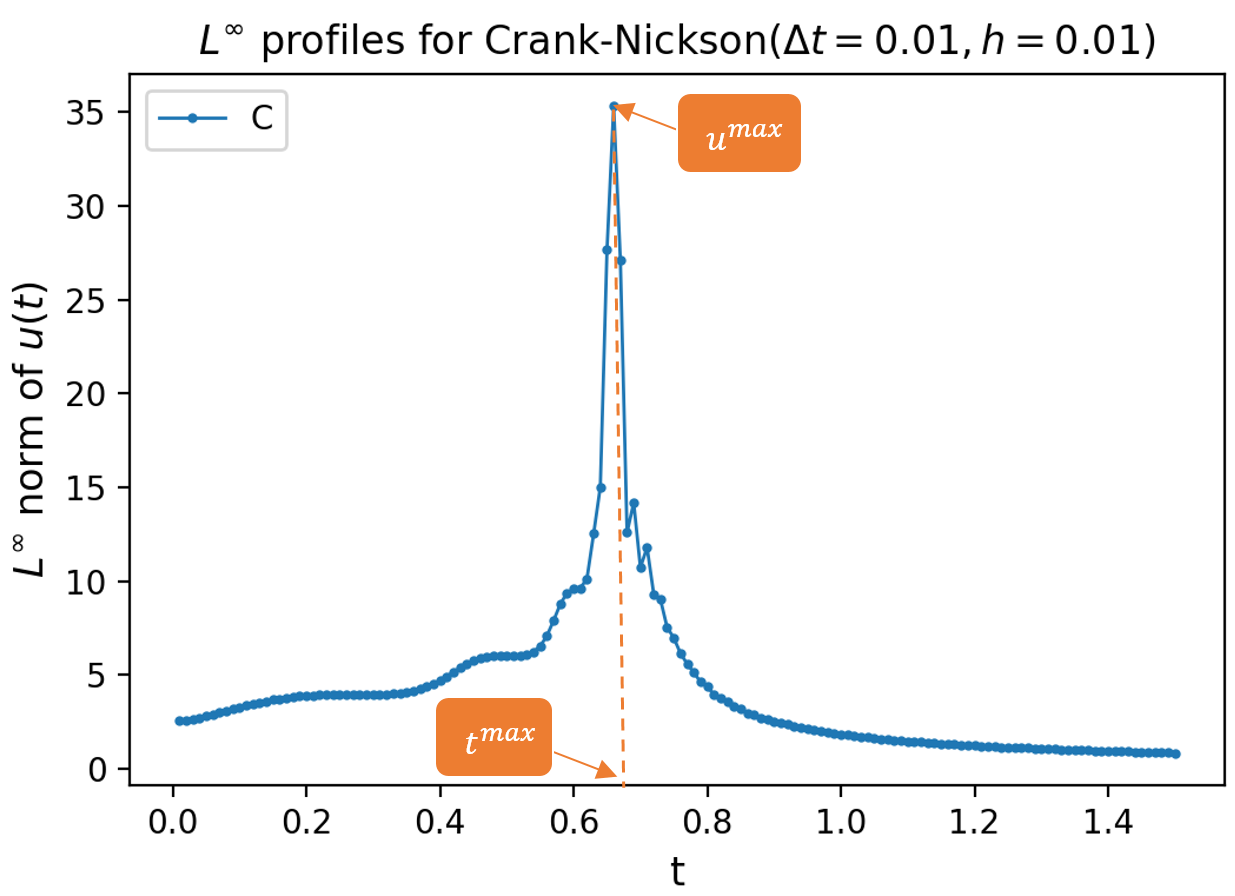}}
\subfigure[]{
\label{fig_blowup_point:b}
\includegraphics[height=1.75in,width=2.4in]{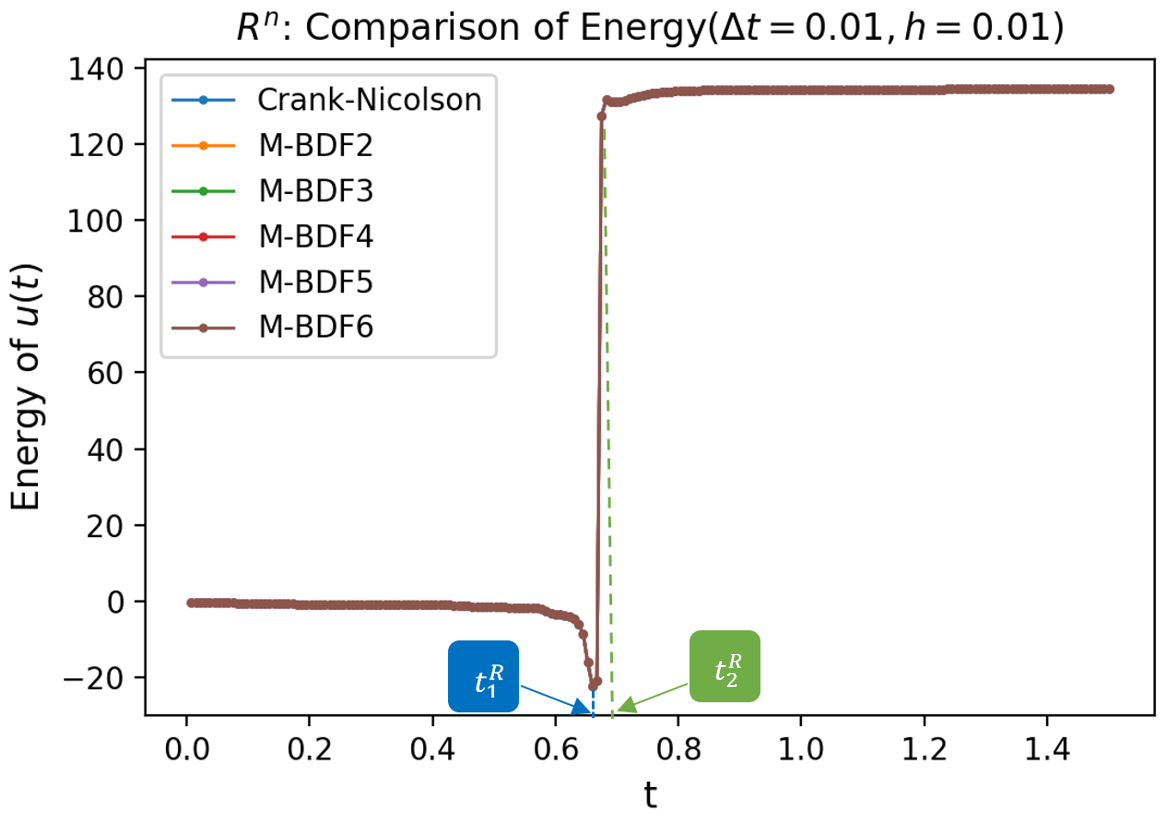}}
\caption{Three computed $t^{\max}$ identified from the data in Table~\ref{table_blowup}.}
\label{fig_blowup_point}
\end{figure}
%%% 
\begin{table}[htb]\small\centering\footnotesize
\caption{Comparison of the blow-up times found by different methods 
(with $N_1 = 2000$ and $N_2 = 4000$).}
\begin{tabular*}{\hsize}{@{}@{\extracolsep{\fill}}clcccccccc@{}}
\toprule
\multirow{2}{*}{$\Dt$}&\multirow{2}{*}{Schemes}
&\multicolumn{2}{c}{$t^{max}$}&\multicolumn{2}{c}{$t_1^R$}
&\multicolumn{2}{c}{$t_2^R$}&\multicolumn{2}{c}{$u^{max}$ values}\\
\cmidrule(lr){3-4} \cmidrule(lr){5-6} \cmidrule(lr){7-8} \cmidrule(lr){9-10}
& & $N_1$&$N_2$& $N_1$&$N_2$& $N_1$&$N_2$& $N_1$&$N_2$ \\
%\cmidrule(lr){7-8} \cmidrule(lr){10-11} \cmidrule(lr){13-14}
\midrule
\multirow{6}{*}{0.02} &    C-N&                                0.62& 0.64& 
\multicolumn{2}{c}{\multirow{6}{*}{0.62}}& \multicolumn{2}{c}{\multirow{6}{*}{0.66}}& 24.6002& 24.6537\\
					  & M-BDF2& \multicolumn{2}{c}{\multirow{3}{*}{0.60}}&    & & & & 10.6163& 10.6164\\
					  & M-BDF3&                                    &     &    & & & & 10.3393& 10.3550\\
					  & M-BDF4&                                    &     &    & & & & 10.5676& 10.5936\\
					  & M-BDF5& \multicolumn{2}{c}{\multirow{2}{*}{0.68}}&    & & & & 14.3712& 14.5215\\
					  & M-BDF6&                                    &     &    & & & & 21.1538& 21.4550\\
\\
\multirow{6}{*}{0.01} &    C-N&                  \multicolumn{2}{c}{0.65}& 
\multicolumn{2}{c}{\multirow{6}{*}{0.65}}& \multicolumn{2}{c}{\multirow{6}{*}{0.67}}& 35.2965& 35.2392\\
					  & M-BDF2& \multicolumn{2}{c}{\multirow{3}{*}{0.64}}&    & & & & 15.3288& 15.3288\\
					  & M-BDF3&                                    &     &    & & & & 14.2396& 14.4062\\
				  	  & M-BDF4&                                    &     &    & & & & 14.5168& 14.7713\\
					  & M-BDF5& \multicolumn{2}{c}{\multirow{2}{*}{0.68}}&    & & & & 20.7615& 21.1336\\
					  & M-BDF6&                                    &     &    & & & & 29.9041& 30.7954\\
\\
\multirow{6}{*}{0.005}&    C-N&                               0.67& 0.675& 
\multicolumn{2}{c}{\multirow{6}{*}{0.67}}& \multicolumn{2}{c}{\multirow{6}{*}{0.68}}& 48.2457&48.2666\\
					  & M-BDF2&\multicolumn{2}{c}{\multirow{3}{*}{0.665}}&    & & & & 20.4022&20.4023\\
					  & M-BDF3&                                    &     &    & & & & 20.6082&20.8447\\
					  & M-BDF4&                                    &     &    & & & & 21.5725&22.2511\\
					  & M-BDF5&\multicolumn{2}{c}{\multirow{2}{*}{0.685}}&    & & & & 29.6170&30.2689\\
					  & M-BDF6&                                    &     &    & & & & 43.8004&44.7018\\
\bottomrule
\end{tabular*}
\label{table_blowup}
\end{table}

%\FloatBarrier
\section{Conclusion.}\label{sec-9}
In this paper we present a family of mass- and energy-conserved time-stepping 
schemes for general nonlinear Schr\"odinger equations. This includes the modified 
Crank-Nicolson scheme, Leapfrog scheme and modified BDF schemes as well  as 
a four-step symmetric scheme. We have shown that the proposed schemes have second-order 
convergence while preserving both mass and energy in the discrete setting without 
any mesh constraint. We also derive the dispersion relation equation for 
each of the proposed schemes and numerically show the convergence orders  
for the numerical dispersions. Extensive numerical experiments have been presented 
to illustrate the performance of the proposed schemes and to validate the theoretical 
results of the paper. Additional numerical experiments have also be provided 
to test the capability of the proposed schemes for capturing the blow-up phenomenon of the quintic 
nonlinear Schr\"odinger equation. Various criteria are proposed for identifying
the blow-up time and their effectiveness is also extensively examined. 
It is a bit disappointing that all proposed time-stepping schemes of this paper only have
second order accuracy and second order truncation errors. A very interesting question
is whether it is possible to improve these schemes into higher order schemes while 
still conserving both mass and (a modified) energy. Another challenging question
is whether it is possible to construct mass- and energy-conserved linear schemes (that is,  
only a linear problem needs to be solved at each time step). These 
open questions are worthy of further investigation and will be addressed in a further work.

\bigskip
\noindent {\bf  Acknowledgments.} 
The work of the first author was partially supported by the NSF Grant 
DMS-1620168, and the work of the second author was partially supported by NSF Grant
DMS-1812666. Part of the third author's work was done during a recent visit to the 
University of Tennessee at Knoxville (UTK), the author would like to thank Department 
of Mathematics of UTK for the support and hospitality. 
The visit was financially supported by a scholarship from the author's home institution,
Northwestern Polytechnical University of China.

\bibliographystyle{abrv}
%\bibliography{Ref}
\bibliographystyle{plain}

\end{document}